\numberwithin{equation}{section}
\newtheorem{thm}{Theorem}[section]
\newtheorem{lem}[thm]{Lemma}
\newtheorem{prop}[thm]{Proposition}
\theoremstyle{definition}
\newtheorem{exmp}[thm]{Example}
\theoremstyle{remark}
\newtheorem{rem}[thm]{Remark}
\newcommand{\QED}{\ifhmode\unskip\nobreak\fi\quad {\rm Q.E.D.}} 
\newcommand{\cM}{\mathcal{M}}
\renewcommand{\P}{\mathbb{P}}
\newcommand{\R}{\mathbb{R}}
\newcommand{\cW}{\mathcal{W}}
\newcommand{\tr}{{\rm tr}}
\newcommand{\norm}[1]{\left\lVert#1\right\rVert}
\definecolor{applegreen}{rgb}{0.55, 0.71, 0.0}
\begin{document}

\begin{frontmatter}
\title{Maximum Likelihood Estimation for Linear Gaussian Covariance Models}

\begin{aug}
\author{\fnms{Piotr} \snm{Zwiernik}\thanksref{t1}\ead[label=e1]{pzwiernik@berkeley.edu}},
\author{\fnms{Caroline} \snm{Uhler}\thanksref{t2}\ead[label=e2]{cuhler@mit.edu}},
\and
\author{\fnms{Donald} \snm{Richards}\thanksref{t3}\ead[label=e3]{richards@stat.psu.edu}}

\runauthor{P. Zwiernik, C. Uhler, D. Richards}

\affiliation{
University of California, Berkeley\thanksmark{t1},
Massachusetts Institute of Technology and IST Austria\thanksmark{t2},
Penn State University\thanksmark{t3}
}

\address{P.~Zwiernik\\
Department of Statistics\\
University of California, Berkeley\\
Berkeley, CA 94720, U.S.A. \\
\printead{e1}}

\address{C.~Uhler\\
IDSS and EECS Department \\
Massachusetts Institute of Technology\\ Cambridge, MA 02139, U.S.A.\\
\printead{e2}}

\address{D.~Richards\\
Department of Statistics \\
Penn State University \\
University Park, PA 16802, U.S.A. \\
\printead{e3}}
\end{aug}

\begin{abstract}
We study parameter estimation in linear Gaussian covariance models, which are $p$-dimensional Gaussian models with linear constraints on the covariance matrix.  Maximum likelihood estimation for this class of models leads to a non-convex optimization problem which typically has many local maxima. Using recent results on the asymptotic distribution of extreme eigenvalues of the Wishart distribution, we provide sufficient conditions for any hill-climbing method to converge to the global maximum. Although we are primarily interested in the case in which $n>\!\!>p$, the proofs of our results utilize large-sample asymptotic theory under the scheme $n/p \to \gamma > 1$. Remarkably, our numerical simulations indicate that our results remain valid for $p$ as small as $2$. An important consequence of this analysis is that for sample sizes $n \simeq 14 p$, maximum likelihood estimation for linear Gaussian covariance models behaves as if it were a convex optimization problem.
\end{abstract}

%
%

\begin{keyword}[class=MSC]
\kwd[Primary ]{62H12}
\kwd{62F30}
\kwd[; Secondary ]{90C25, 52A41}
\end{keyword}

\begin{keyword}
\kwd{Linear Gaussian covariance model; Convex optimization; Tracy-Widom law; Wishart distribution; Eigenvalues of random matrices; Brownian motion tree model; Network tomography;}
\end{keyword}

\end{frontmatter}

\section{Introduction}
In many statistical analyses, the covariance matrix possesses a specific structure and must satisfy certain constraints. We refer to~\cite{Pourahmadi2011} for a comprehensive review of covariance estimation in general and a discussion of numerous specific covariance matrix constraints. In this paper, we study Gaussian models with linear constraints on the covariance matrix. Simple examples of such models are correlation matrices or covariance matrices with prescribed zeros. 

Linear Gaussian covariance models appear in various applications. They were introduced to study repeated time series~\cite{andersonLinearCovariance} and are used in various engineering problems \cite{zhou2005comparison,dietrich2007robust}. In Section~\ref{sec_applications}, we describe in detail Brownian motion tree models, a particular class of linear Gaussian covariance models, and their applications to phylogenetics and network tomography.

To define Gaussian models with linear constraints on the covariance matrix, let $\mathbb{S}^p$ be the set of symmetric $p \times p$ matrices considered as a subset of $\R^{p(p+1)/2}$ with the scalar product given by $\langle A,B\rangle=\tr(AB)$, $A, B \in \mathbb{S}^p$, and denote by $\mathbb{S}^p_{\succ 0}$ the open convex cone in $\mathbb{S}^p$ of positive definite matrices. For \mbox{$v=(v_1,\ldots,v_r) \in \R^r$}, the $r$-dimensional Euclidean space, a random vector $X$ taking values in $\mathbb{R}^p$ is said to satisfy the \emph{linear Gaussian covariance model} given by \mbox{$G_0,G_1,\ldots,G_r \in \mathbb{S}^p$} if $X$ follows a multivariate Gaussian distribution with mean vector $\mu$ and covariance matrix $\Sigma_v$, denoted $X \sim \mathcal{N}_p(\mu,\Sigma_v)$, where 
$
\Sigma_v 
\;=\; G_0 + \sum_{i=1}^r v_i G_i.
$

In this paper, we make no assumptions about the mean vector $\mu$ and we always use the sample mean to estimate $\mu$. Then the linear covariance model, which we denote by $\mathcal{M}(\mathcal{G})$ with $\mathcal{G}=(G_0,\ldots,G_r)$, can be parametrized by
\begin{equation}
\label{ThetaGset}
\Theta_{\mathcal{G}} \;=\; \Big\{v=(v_1,\dots v_r)\in\R^{r} \;\Big|\; G_0 + \sum_{i=1}^r v_i G_i \in \mathbb{S}^p_{\succ 0}\Big\}.
\end{equation}
We assume that $G_1,\dots,G_r$ are linearly independent, meaning that \mbox{$\sum_{i=1}^r v_i G_i$} is the zero matrix if and only if $v=0$. This assumption ensures that the model is identifiable. We also assume that $G_0$ is orthogonal to the linear subspace spanned by $G_1,\ldots,G_r$, i.e., $\tr(G_{0}G_{i})=0$ for $i=1,\ldots,r$. Typically, $G_{0}$ is either the zero matrix or the identity matrix and the linear constraints on the diagonal and the off-diagonal elements are disjoint, hence satisfying orthogonality. Note that throughout this paper we assume that the matrices $G_0,G_1,\dots ,G_r$ are given and that $\Theta_\mathcal{G}$ is non-empty. The goal is to infer the parameters $\theta_1,\dots ,\theta_r$.

An important subclass of linear Gaussian covariance models is obtained by assuming that the $G_i$'s are positive semi-definite matrices and that the parameters $v_i$ are nonnegative. Brownian motion tree models discussed in Section~\ref{sec_applications} fall into this class; further examples were discussed in~\cite{rao1972estimation, kohli2016modeling}.

Another prominent example of a linear Gaussian covariance model is the set of all $p\times p$ correlation matrices; this model is defined by 
$$
\Theta_{\mathcal{G}} \;=\; \Big\{v\in\R^{\binom{p}{2}} \;\Big|\;  \mathbb{I}_p+ \sum_{1\leq i<j \leq p} v_{ij} (E_{ij}+E_{ji}) \in \mathbb{S}^p_{\succ 0}\Big\},
$$
where $\mathbb{I}_p$ denotes the identity matrix of size $p\times p$, and $E_{ij}$ is a $p\times p$ matrix whose $(i,j)$th entry is $1$ and all other entries are $0$, and $v = (v_{ij}: 1 \le i < j \le p) \in \R^{\binom{p}{2}}$ is an upper-triangular array. Similarly, also covariance matrices with prescribed zeros are linear Gaussian covariance models~\cite{Chaudhuri2007, drton2004,wermuthChains} and various methods have been described for learning the underlying sparsity structure in a covariance matrix~\cite{Rothman_2009, Bien_2011, Pourahmadi2011}.

Linear Gaussian covariance models were introduced in~\cite{andersonLinearCovariance}, motivated by the linear structure of covariance matrices in various time series models, and were used more recently for the analysis of repeated time series and longitudinal data~\cite{andrade86K, pourahmadi1999joint, jansson2000, wu2012covariance}.
Anderson studied maximum likelihood estimation for these models and proposed iterative procedures, such as the Newton-Raphson method~\cite{andersonLinearCovariance} and a scoring method~\cite{anderson73}, for calculating the maximum likelihood estimator (MLE) of $\Sigma_v$. 

We draw from $\mathcal{N}(\mu,\Sigma_v)$ a random sample $X_1,\dots, X_n$ and form the corresponding sample covariance matrix 
$$
S_{n}\;=\;\frac{1}{n}\sum_{i=1}^n (X_i-\bar X) (X_i-\bar X)^T,
$$
where $\bar X=n^{-1}\sum_{i=1}^{n}X_{i}$. In this paper we are interested in the setting where $n>\!\!>p$ and we assume throughout that $S_n\succ 0$, which holds with probability 1 when $n\geq p$. Up to an additive constant term, the log-likelihood function $\ell\!:\, \Theta_{\mathcal{G}}\to \R$, is given by 
\begin{equation}
\label{eq:logLike}
\ell(v)\quad=\quad -\;\frac{n}{2}\log \det \Sigma_{ v} -\frac{n}{2}{\rm tr}(S_{n}\Sigma_{v}^{-1}).
\end{equation}
The Gaussian log-likelihood function is a concave function of the inverse covariance matrix $\Sigma^{-1}$. Therefore, maximum likelihood estimation in Gaussian models with linear constraints on the inverse covariance matrix, as is the case for example for Gaussian graphical models, leads to a convex optimization problem~\cite{Dempster, my_paper}. However, when the linear constraints are on the covariance matrix, then the log-likelihood function generally is not concave and may have multiple local maxima~\cite{Chaudhuri2007, drton2004}. This complicates parameter estimation and inference considerably. 

A classic example of parameter estimation in linear Gaussian covariance models is the problem of estimating the correlation matrix of a Gaussian model, a venerable problem whose solution has been sought after for decades~\cite{Rousseeuw, Small, Stuart}. In Section~\ref{sec_3} we apply the results developed in this paper to provide a complete solution to this problem.

A special class of linear Gaussian covariance models for which maximum likelihood estimation is unproblematic are models such that $\Sigma$ and $\Sigma^{-1}$ have the same pattern, i.e., 
$$\Sigma = G_0 + \sum_{i=1}^r v_iG_i \qquad \textrm{and}\qquad \Sigma^{-1} = G_0 + \sum_{i=1}^r w_iG_i.$$
Examples of such models are Brownian motion tree models on the star tree, as given in (\ref{def_star_tree}), with equal variances. Szatrowski showed in a series of papers~\cite{Szatrowski1978, Szatrowski1980, Szatrowski1985} that the MLE for linear Gaussian covariance models has an explicit representation, i.e., it is a known linear combination of entries of the sample covariance matrix, if and only if $\Sigma$ and $\Sigma^{-1}$ have the same pattern. This is equivalent to requiring that the linear subspace  $\Theta_{\mathcal{G}}$ forms a Jordan algebra, i.e., if $\Sigma\in\Theta_{\mathcal{G}}$ then also $\Sigma^2\in\Theta_{\mathcal{G}}$~\cite{jensen1988}. Furthermore, Szatrowski proved that for this restrictive model class the MLE is the arithmetic mean of the corresponding elements of the sample covariance matrix and that Anderson's scoring method~\cite{anderson73} yields the MLE in one iteration when initiated at any positive definite matrix in the model.

In this paper, we show that for general linear Gaussian covariance models the log-likelihood function is, with high probability, concave in nature; consequently, the MLE can be found using any hill-climbing method such as the Newton-Raphson algorithm or the EM algorithm~\cite{RubinLinearEM}. To be more precise, in Section~\ref{sec_2} we analyze the log-likelihood function for linear Gaussian covariance models and note that it is strictly concave in the convex region 
\begin{equation}
\label{eq:delta}
\Delta_{2S_n}\quad:=\quad\{v\in \Theta_{\mathcal{G}} \mid 0\prec \Sigma_{v} \prec 2S_{n}\}.
\end{equation}
We prove that this region contains the true data-generating parameter, the global maximum of the likelihood function, and the least squares estimator, with high probability. Therefore, the problem of maximizing the log-likelihood function is essentially a convex problem and any hill-climbing algorithm, when initiated at the least squares estimator, will remain in the convex region $\Delta_{2S_n}$ and will converge to the MLE in a monotonically \mbox{increasing manner}. Other possible choices of easily computable starting points are discussed in Section \ref{sec:start}.

We emphasize that the region $\Delta_{2S_n}$ is contained in a larger subset of $\Theta_{\mathcal{G}}$ where the log-likelihood function is concave. This larger subset is, in turn, contained in an even larger region where the log-likelihood function is unimodal. Therefore, the probability bounds that we derive in this paper are lower bounds for the exact probabilities that the optimization procedure is well-behaved in the sense that any hill-climbing method will converge monotonically to the MLE when initiated at the least squares estimator.

In addition to our theoretical analysis of the behavior of the log-likelihood function we investigate computationally, with simulated data, the performance of the Newton-Raphson method for calculating the MLE. In Section~\ref{sec_3}, we discuss the problem of estimating the correlation matrix of a Gaussian model; as we noted earlier, this is a linear covariance model with linear constraints on the diagonal entries. In Section~\ref{sec_4}, we compare the MLE and the least squares estimator in terms of various loss functions and show that for linear Gaussian covariance models the MLE is usually superior to the least squares estimator. The paper concludes with a basic robustness analysis with respect to the Gaussian assumption in Section \ref{sec:robustness} and a short discussion.

\section{Brownian motion tree models}
\label{sec_applications}

In this section, we  describe the importance of linear Gaussian covariance models in practice. Arguably the most prominent examples of linear Gaussian covariance models are \emph{Brownian motion tree models}~\cite{felsenstein_maximum-likelihood_1973}. Given a rooted tree $T$ on a set of nodes \mbox{$V=\{1,\dots ,r\}$} and with $p$ leaves, where $p < r$, the corresponding Brownian motion tree model consists of all covariance matrices of the form 
$$
\Sigma_v = \sum_{i\in V} v_i\, e_{\textrm{de}(i)}\,e^T_{\textrm{de}(i)},
$$
where $e_{\textrm{de}(i)}\in\mathbb{R}^p$ is a $0/1$-vector with entry $1$ at position $j$ if leaf $j$ is a descendent of node $i$ and $0$ otherwise. Here, the parameters $v_i$ describe branch lengths and the covariance between any two leaves $i,j$ is the amount of shared ancestry between these leaves, i.e., it is the sum of all the branch lengths from the root of the tree to the least common ancestor of $i$ and $j$. Note that every leaf is a descendant of itself. As an example, if $T$ is a star tree on three leaves then
\begin{equation}
\label{def_star_tree}
\Sigma_v = 
\begin{pmatrix}
v_1+v_4 & v_4 & v_4\\ v_4 & v_2+v_4 & v_4\\ v_4&v_4& v_3+v_4 
\end{pmatrix},
\end{equation}
where $v_1, v_2, v_3$ parametrize the branches leading to the leaves and $v_4$ parametrizes the root branch. Hence the linear structure on $\Sigma_v$ is given by the structure of the underlying tree.

Felsenstein's original paper~\cite{felsenstein_maximum-likelihood_1973} introducing Brownian motion tree models as phylogenetic models for the evolution of continuous characters has been highly influential. Brownian motion tree models are now the standard models for building phylogenetic trees based on continuous traits and are implemented in various phylogenetic software such as PHYLIP~\cite{felsenstein_evolutionary_1981} or Mesquite~\cite{Lee_2006}. Brownian motion tree models represent evolution under drift and are often used to test for the presence of selective forces~\cite{Freckleton_2006, Schraiber_2013}. The early evolutionary trees were all built based on morphological characters such as body size~\cite{Freckleton_2006, cooper2010body}. In the 1960s molecular phylogenetics was made possible and from then onwards phylogenetic trees were built mainly based on similarities between DNA sequences, hence based on discrete characters. However, with the burst of new genomic data, such as gene expression, phylogenetic models for continuous traits are again becoming important and Brownian motion tree models are widely used also in this context~\cite{brawand2011evolution, Schraiber_2013}.

More recently, Brownian motion tree models have been applied to network tomography in order to determine the structure of the connections in the Internet~\cite{Nowak2010, Nowak2004}. In this application, messages are transmitted by sending packets of bits from a source node to different destinations and the correlation in arrival times is used in order to infer the underlying network structure. A common assumption is that the underlying network has a tree structure. Then the Brownian motion model corresponds to the intuitive model where the correlation in arrival time is given by the sum of the edge lengths along the path that was shared by the messages.

Of great interest in these applications is the problem of learning the underlying tree structure. But maximum likelihood estimation of the underlying tree structure in Brownian motion tree models is known to be NP-hard~\cite{Roch_2006}. The most popular heuristic for searching the tree space is Felsenstein's pruning algorithm~\cite{felsenstein_maximum-likelihood_1973,felsenstein_evolutionary_1981}. At each step, this algorithm computes the maximum likelihood estimator of the branch lengths given a particular tree structure. This is a non-convex optimization problem which is usually analyzed using hill-climbing algorithms with different initiation points. In this paper, we show that this is unnecessary and give an efficient method with provable guarantees for maximum likelihood estimation in Brownian motion models when the underlying tree structure is known.

\section{Convexity of the log-likelihood function}
\label{sec_2}

Given a sample covariance matrix $S_n$ based on a random sample of size $n$ from $\mathcal{N}(\mu,\Sigma^*)$, we denote by $\Delta_{2S_n}$ the convex subset (\ref{eq:delta}) of the parameter space. Consider the set 
$$
D_{2S_n} =\{\Sigma\in\mathbb{S}^p_{\succ 0} \mid 0\prec \Sigma \prec 2S_n\}.
$$
Note that the sets $D_{2S_n}$ and $\Delta_{2S_n}$ are random and depend on $n$, the number of observations. By construction, $D_{2S_n}$ contains a matrix $\Sigma_v$ (i.e., $\Sigma_v \in D_{2S_n}$) if and only if $\Delta_{2S_n}$ contains $v$ (i.e., $v \in \Delta_{2S_n}$). Therefore, we may identify $\Delta_{2S_n}$ with $D_{2S_n}$ and use the notations $\Sigma_v \in \Delta_{2S_n}$ and $v \in \Delta_{2S_n}$ interchangeably. In this section, we analyze the log-likelihood function $\ell(v)$ for linear Gaussian covariance models. The starting point is the following important result on the log-likelihood for the unconstrained model.

\begin{prop}\label{prop:HessianL}
The likelihood function $\ell:\mathbb{S}^p_{\succ 0}\to \R$ for the unconstrained Gaussian model is strictly concave in and only in the region $D_{2S_{n}}$.
\end{prop}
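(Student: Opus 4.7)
The plan is to compute the Hessian of $\ell$ on $\mathbb{S}^p_{\succ 0}$ and then reduce it, by a congruence transformation that decouples $\Sigma$ from $S_n$, to a quadratic form whose definiteness can be read off directly. Using the standard identities $d\log\det\Sigma[H] = \tr(\Sigma^{-1}H)$ and $d\Sigma^{-1}[H] = -\Sigma^{-1}H\Sigma^{-1}$ and differentiating once more, one obtains
$$
d^{2}\ell(\Sigma)[H,H] \;=\; \frac{n}{2}\bigl[\tr(\Sigma^{-1}H\Sigma^{-1}H) \;-\; 2\,\tr(S_{n}\Sigma^{-1}H\Sigma^{-1}H\Sigma^{-1})\bigr]
$$
for every direction $H \in \mathbb{S}^p$.

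Next I would substitute $B := \Sigma^{-1/2}H\Sigma^{-1/2}$ and $C := \Sigma^{-1/2}S_{n}\Sigma^{-1/2}$, both symmetric. Repeated use of the cyclicity of the trace collapses the Hessian to
$$
d^{2}\ell(\Sigma)[H,H] \;=\; \frac{n}{2}\,\tr\!\bigl((I_p - 2C)\,B^{2}\bigr).
$$
Since the map $H\mapsto B$ is a linear bijection of $\mathbb{S}^p$ onto itself, strict negative definiteness of $d^2\ell$ at $\Sigma$ is equivalent to $\tr((I_p - 2C)B^{2}) < 0$ for every nonzero $B \in \mathbb{S}^p$. By conjugation with $\Sigma^{1/2}$, the condition $I_p - 2C \prec 0$ rearranges to $\Sigma \prec 2S_{n}$, so once this equivalence is established the proposition follows upon combining with $\Sigma \succ 0$.

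Both directions of the remaining equivalence are short. For the \emph{if} direction, diagonalizing $B = \sum_{i}b_{i}u_{i}u_{i}^{T}$ gives $\tr((I_p - 2C)B^{2}) = \sum_{i} b_{i}^{2}\,u_{i}^{T}(I_p - 2C)u_{i}$, which is strictly negative whenever $B \neq 0$ and $I_p - 2C \prec 0$. For the \emph{only if} direction, the obstruction is rank-one: if $I_p - 2C$ has a nonnegative eigenvalue $\lambda$ with unit eigenvector $v$, set $B := vv^{T}$; then $B^{2} = B$ and $\tr((I_p - 2C)B^{2}) = v^{T}(I_p - 2C)v = \lambda \geq 0$, ruling out strict concavity at $\Sigma$. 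I do not anticipate a serious obstacle; the one mild point is to pass between pointwise negative definiteness of the Hessian of a $C^{2}$ function on the open convex set $D_{2S_n}$ and strict concavity there, which is the standard second-derivative characterization of strict concavity.
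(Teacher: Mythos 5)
Your proof is correct and takes essentially the same route as the paper's: both compute the Hessian, conjugate by $\Sigma^{-1/2}$ to reduce it to the quadratic form $\tfrac{n}{2}\tr\bigl((I_p-2C)B^2\bigr)$ (the paper writes the identical object as $-\tr(\tilde A C'\tilde A)$ with $C'=\Sigma^{-1/2}(2S_n-\Sigma)\Sigma^{-1/2}=-(I_p-2C)$), and both refute negative definiteness outside $D_{2S_n}$ with a rank-one direction. The one point you flag at the end --- passing from pointwise definiteness of the Hessian to strict concavity, and in the converse from a nonnegative (possibly zero) Hessian eigenvalue to failure of strict concavity --- is treated at exactly the same level of informality in the paper, so no further work is needed to match it.
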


\begin{proof}
Let $A \in \mathbb{S}^p$. By (\ref{eq:logLike}), the corresponding directional derivative of $\ell(\Sigma)$ is
\begin{equation}\label{dirder}
\nabla_{A}\ell(\Sigma)=-\frac{n}{2}\tr(A\Sigma^{-1})+\frac{n}{2}\tr(S_{n}\Sigma^{-1}A\Sigma^{-1}).
\end{equation}
For $A,B\in \mathbb{S}^p$, it follows that 
\begin{equation}\label{dirhess}
\nabla_{B}\nabla_{A}\ell(\Sigma)=-\frac{n}{2}\tr((2S_{n}-\Sigma)\Sigma^{-1}B\Sigma^{-1}A\Sigma^{-1}).
\end{equation}
The log-likelihood is strictly concave in the region $D_{2S_{n}}$ if and only if $\nabla_{A}\nabla_{A}\ell(\Sigma)<0$ for every nonzero $A\in \mathbb{S}^{p}$ and $\Sigma \in D_{2S_n}$. Using the formula for $\nabla_{B}\nabla_{A}$ we can write
$$
\nabla_{A}\nabla_{A}\ell(\Sigma)=-\tr(\Sigma^{-1/2}A\Sigma^{-1/2}\,\Sigma^{-1/2}(2S_{n}-\Sigma)\Sigma^{-1/2}\,\Sigma^{-1/2}A\Sigma^{-1/2}).
$$
We rewrite this more compactly as $\nabla_{A}\nabla_{A}\ell(\Sigma)=-\tr(\tilde AC\tilde A)$, where $C=\Sigma^{-1/2}(2S_{n}-\Sigma)\Sigma^{-1/2}$ and $\tilde A=\Sigma^{-1/2}A\Sigma^{-1/2}$. If $\Sigma\in D_{2S_{n}}$ then  $C$ is positive definite and hence $\tilde AC\tilde A$ is positive semidefinite. Therefore, $\tr(\tilde AC\tilde A)\geq 0$ and it is zero only when $\tilde AC\tilde A=0$. Because $C\succ 0$, the former holds only if $A=0$, which proves strict concavity of $\ell(\Sigma)$.

To prove the converse suppose that $\Sigma\notin D_{2S_{n}}$. Then there exists a nonzero vector $u\in \R^{p}$ such that $u^{T}(2S_{n}-\Sigma)u\leq 0$. Let $A=\Sigma u u^{T}\Sigma $. Then
\begin{align*}
\nabla_{A}\nabla_{A}\ell(\Sigma)&=-\tr(uu^{T}(2S_{n}-\Sigma)uu^{T}\Sigma)\\
&=-(u^{T}\Sigma u)\cdot\left(u^{T}(2S_{n}-\Sigma)u\right)\geq 0,
\end{align*}
which proves that $\ell(\cdot)$ is \emph{not} strictly concave at $\Sigma$. This concludes the proof.
\end{proof}

As a direct consequence of Proposition~\ref{prop:HessianL} we obtain the following result on the log-likelihood function for linear covariance models:

\begin{prop}
\label{nablaNegDef}
The log-likelihood function $\ell\!: \Theta_{\mathcal{G}}\to \R$ is strictly concave on $\Delta_{2S_n}$. In particular, maximizing $\ell(v)$ over $\Delta_{2S_n}$ is a convex optimization problem. 
\end{prop}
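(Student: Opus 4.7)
The plan is to derive this as a direct corollary of Proposition~\ref{prop:HessianL} by pulling back the Hessian from $\mathbb{S}^p_{\succ 0}$ to the parameter space $\Theta_{\mathcal{G}}$ via the affine parametrization $v \mapsto \Sigma_v = G_0 + \sum_{i=1}^r v_i G_i$.

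First I would check that $\Delta_{2S_n}$ is a convex subset of $\R^r$. The set $D_{2S_n} = \{\Sigma \in \mathbb{S}^p_{\succ 0} \mid 0 \prec \Sigma \prec 2S_n\}$ is the intersection of two open half-spaces defined by the Loewner order and is therefore convex; since $v \mapsto \Sigma_v$ is affine, $\Delta_{2S_n}$ is the preimage of a convex set under an affine map, hence convex. This lets us even speak about concavity on $\Delta_{2S_n}$.

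Next, I would compute directional derivatives in $v$ using the chain rule. For a nonzero direction $w = (w_1,\dots,w_r) \in \R^r$, write $A_w = \sum_{i=1}^r w_i G_i \in \mathbb{S}^p$. Then the first and second directional derivatives of $\ell$ at $v$ in direction $w$ coincide with the unconstrained directional derivatives at $\Sigma_v$ in direction $A_w$:
$$
\nabla_{w}\ell(v) \;=\; \nabla_{A_w}\ell(\Sigma_v), \qquad \nabla_{w}\nabla_{w}\ell(v) \;=\; \nabla_{A_w}\nabla_{A_w}\ell(\Sigma_v),
$$
which can be read off from formulas \eqref{dirder} and \eqref{dirhess}.

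Finally I would apply Proposition~\ref{prop:HessianL}: for $\Sigma_v \in D_{2S_n}$ and any nonzero $A \in \mathbb{S}^p$, the quadratic form $\nabla_A \nabla_A \ell(\Sigma_v)$ is strictly negative. The one subtle point—the only thing that could conceivably fail—is whether $w \neq 0$ really forces $A_w \neq 0$. But this is precisely the linear independence assumption imposed on $G_1,\dots,G_r$ at the beginning of the paper to ensure identifiability. So for every nonzero $w$ we get $A_w \neq 0$ and hence $\nabla_w \nabla_w \ell(v) < 0$ throughout $\Delta_{2S_n}$, proving strict concavity. Combined with convexity of $\Delta_{2S_n}$, this makes the maximization of $\ell$ over $\Delta_{2S_n}$ a convex optimization problem. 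There is no real obstacle beyond remembering to invoke linear independence to upgrade concavity to strict concavity.
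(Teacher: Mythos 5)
Your proposal is correct and is exactly the argument the paper leaves implicit when it states the proposition "as a direct consequence of Proposition~\ref{prop:HessianL}": restrict the strictly concave function to the affine slice, using convexity of $\Delta_{2S_n}$ and the linear independence of $G_1,\dots,G_r$ so that nonzero directions $w$ give nonzero matrices $A_w$. Nothing is missing.
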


Note that in general the region $\Delta_{2S_n}$ is contained in a larger subset of $\Theta_{\mathcal{G}}$ where the log-likelihood function is concave. This is the case, for example, for the linear Gaussian covariance models studied in~\cite{Szatrowski1978, Szatrowski1980, Szatrowski1985} with the same linear constraints on the covariance matrix as on the inverse covariance matrix. For this class of models the log-likelihood function is concave over the whole positive definite cone.

In the remainder of this section we obtain probabilistic conditions which guarantee that the true data-generating covariance matrix $\Sigma^*$, the global maximum ${\hat{v}}$, and the least squares estimator ${\bar{v}}$, are contained in the convex region $\Delta_{2S_n}$.

\subsection{The true covariance matrix and the Tracy-Widom law}
\label{sec_true_cov}

For $A\in\mathbb{S}^p$, we denote by $\lambda_{\min}(A)$ and $\lambda_{\max}(A)$ the minimal and maximal eigenvalues, respectively, of $A$. We denote by $\norm{A}$ the spectral norm of $A$, i.e.,
$$
\norm{A} \quad = \quad \max(|\lambda_{\min}(A)|, |\lambda_{\max}(A)|).
$$
In the sequel, we will often make use of the well-known fact that 
\begin{equation}
\label{lem:SimpleReduction}
\norm{A}<\epsilon \quad \textrm{if and only if}\quad -\epsilon\, \mathbb{I}_{p}\prec A\prec\epsilon\,\mathbb{I}_{p}.
\end{equation}

Given a sample covariance matrix $S_n$ based on a random sample of size $n\geq p$ from $\mathcal{N}(\mu,\Sigma^*)$, then $nS_{n}$ follows a Wishart distribution, $\mathcal{W}_{p}(n-1,\Sigma^*)$. If $\Sigma^*=\mathbb{I}_p$, we say that $nS_{n}$ follows a \emph{white Wishart} (or \emph{standard Wishart}) distribution. Note that $W_{n-1}: = n{\Sigma^*}^{-1/2}S_{n}{\Sigma^*}^{-1/2}\sim \mathcal{W}_{p}(n-1,\mathbb{I}_{p})$; also, the condition $2S_{n}-\Sigma^*\succ 0$ is equivalent to $W_{n-1}\succ \tfrac{n}{2}\,\mathbb{I}_{p}$, which holds if and only if $\lambda_{\min}(W_{n-1})>n/2$. Since $\Sigma^{*}\succ 0$ by assumption, we obtain the following lemma:

\begin{prop}\label{lem:AltMinimalEig}
The probability that $\Delta_{2S_n}$ contains the true data-generating covariance matrix $\Sigma^*$ does not depend on $\Sigma^*$ and is equal to the probability that $\lambda_{\min}(W_{n-1})>n/2$, where $W_{n-1}\sim \mathcal{W}_{p}(n-1,\mathbb{I}_{p})$.
 \end{prop}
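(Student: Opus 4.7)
The plan is to reduce the event $\Sigma^*\in \Delta_{2S_n}$ to a pure eigenvalue condition on a standard Wishart matrix by whitening, and then to read off the claim directly.

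First I would unpack the definition. Membership $\Sigma^*\in\Delta_{2S_n}$ means $0\prec \Sigma^*\prec 2S_n$. Because $\Sigma^*$ is assumed positive definite, the left inequality is automatic, so the event amounts to $2S_n-\Sigma^*\succ 0$. To remove the dependence on $\Sigma^*$, I would conjugate by ${\Sigma^*}^{-1/2}$: positive definiteness is preserved under congruence, so $2S_n-\Sigma^*\succ 0$ is equivalent to
\begin{equation*}
2{\Sigma^*}^{-1/2}S_n{\Sigma^*}^{-1/2}-\mathbb{I}_p\;\succ\;0.
\end{equation*}
Multiplying by $n$, this becomes $W_{n-1}\succ (n/2)\mathbb{I}_p$, where $W_{n-1}:=n{\Sigma^*}^{-1/2}S_n{\Sigma^*}^{-1/2}$. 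By the standard transformation property of the Wishart distribution (if $nS_n\sim\mathcal{W}_p(n-1,\Sigma^*)$, then $A(nS_n)A^T\sim \mathcal{W}_p(n-1,A\Sigma^* A^T)$ for any invertible $A$), applying $A={\Sigma^*}^{-1/2}$ yields $W_{n-1}\sim \mathcal{W}_p(n-1,\mathbb{I}_p)$, a distribution that does not depend on $\Sigma^*$.

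Finally, I would rewrite the matrix inequality $W_{n-1}\succ (n/2)\mathbb{I}_p$ in scalar form. Since this is equivalent to $u^T W_{n-1} u>(n/2)\|u\|^2$ for every nonzero $u\in\mathbb{R}^p$, the event is precisely $\lambda_{\min}(W_{n-1})>n/2$. Chaining the equivalences, we get
\begin{equation*}
\Pr\bigl(\Sigma^*\in\Delta_{2S_n}\bigr)\;=\;\Pr\bigl(2S_n-\Sigma^*\succ 0\bigr)\;=\;\Pr\bigl(\lambda_{\min}(W_{n-1})>n/2\bigr),
\end{equation*}
and the right-hand side depends only on the $\mathcal{W}_p(n-1,\mathbb{I}_p)$ law, hence not on $\Sigma^*$.

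There is essentially no serious obstacle here; the only point requiring any care is justifying that congruence by ${\Sigma^*}^{-1/2}$ produces a standard Wishart, which is a classical property but should be invoked explicitly to make the distributional invariance transparent.
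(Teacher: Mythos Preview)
Your proposal is correct and follows essentially the same route as the paper: the paper's argument (given in the paragraph immediately preceding the proposition) likewise notes that $\Sigma^*\succ 0$ is automatic, whitens via $W_{n-1}=n{\Sigma^*}^{-1/2}S_n{\Sigma^*}^{-1/2}\sim\mathcal{W}_p(n-1,\mathbb{I}_p)$, and observes that $2S_n-\Sigma^*\succ 0$ is equivalent to $\lambda_{\min}(W_{n-1})>n/2$. Your version is slightly more explicit about invoking the congruence property of the Wishart distribution, which is fine.
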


It is generally non-trivial to approximate the distributions of the extreme eigenvalues of the Wishart distribution;~\cite[Section~9.7]{muirheadMVSbook} surveyed the results available and provided expressions for the distributions of the extreme eigenvalues in terms of zonal polynomial series expansions, which are challenging to approximate accurately when $p$ is large. Nevertheless, substantial progress has been achieved recently in the asymptotic scenario in which $n$ and $p$ are large and $n/p\to\gamma> 1$. It is noteworthy that these asymptotic approximations 
are accurate even for values of $p$ as small as $2$; we refer to~\cite{bai99, johnstone2001} for a review of these results. Recently, there has also been work on deriving probabilistic finite-sample bounds for the smallest eigenvalue~\cite{Koltchinskii}. Although in this section we base our analysis on asymptotic results, in Section~\ref{sec_Gaussian_violation} we explain the consequences of the finite-sample results with respect to violation of the Gaussian assumption.

In order to analyze the probability that $\Delta_{2S_n}$ contains the true covariance matrix $\Sigma^*$, we build on the recent work by Ma, who obtained improved approximations for the extreme eigenvalues of the white Wishart distribution~\cite{ma2012} 
in terms of the quantities 
\begin{eqnarray*}
\mu_{n,p}&=&\left((n-\tfrac{1}{2})^{1/2}-(p-\tfrac{1}{2})^{1/2}\right)^{2},\\ \sigma_{n,p}&=&\left((n-\tfrac{1}{2})^{1/2}-(p-\tfrac{1}{2})^{1/2}\right)\Big((p-\tfrac{1}{2})^{-1/2}-(n-\tfrac{1}{2})^{-1/2}\Big)^{1/3},
\end{eqnarray*}
and
\begin{equation}\label{eq:TauNu}
\tau_{n,p}\;=\;\frac{\sigma_{n,p}}{\mu_{n,p}},\qquad\quad \nu_{n,p}\;=\;\log(\mu_{n,p})+\frac{1}{8}\tau_{n,p}^{2}.
\end{equation}
We denote by $F$ the Tracy-Widom distribution corresponding to the Gaussian orthogonal ensemble~\cite{TracyWidom1996}.  Then, Ma proved the following theorem:

\begin{thm} {\rm \cite{ma2012}} \ 
\label{th:MuMinimal}
Suppose that $W_{n}$ has a white Wishart distribution, $\mathcal{W}_{p}(n,\mathbb{I}_{p})$, with $n>p$. Then, as $n\to \infty$ and $n/p\to \gamma\in (1,\infty)$, 
$$
\frac{\log \lambda_{\min}(W_{n})-\nu_{n,p}}{\tau_{n,p}}\quad\overset{\mathcal{L}}{\longrightarrow}\quad G,
$$
where $G(z)=1-F(-z)$ denotes the reflected Tracy-Widom distribution. 
\end{thm}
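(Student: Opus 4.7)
The plan is to derive this from the classical edge Tracy-Widom limit for the smallest Wishart eigenvalue, followed by a delta-method argument. The starting input is the standard hard-edge theorem at this scale: under $n\to\infty$ with $n/p\to\gamma\in(1,\infty)$,
$$
Y_n \;:=\; \frac{\lambda_{\min}(W_n)-\mu_{n,p}}{\sigma_{n,p}}\;\xrightarrow{\mathcal{L}}\;-\xi,\qquad \xi\sim F.
$$
The minus sign (and thus the reflected law $G(z)=1-F(-z)$) reflects the fact that $\lambda_{\min}(W_n)$ sits at the lower edge of the Marchenko-Pastur bulk, with fluctuations pointing upward into the bulk.

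Next, pass to the logarithm by a Taylor expansion around $\mu_{n,p}$. Since $\mu_{n,p}\to\infty$ and $\tau_{n,p}=\sigma_{n,p}/\mu_{n,p}\to 0$ in the regime $n/p\to\gamma>1$,
$$
\log\lambda_{\min}(W_n) - \log\mu_{n,p} \;=\; \log(1+\tau_{n,p}Y_n) \;=\; \tau_{n,p}Y_n + O_P(\tau_{n,p}^{2}).
$$
Dividing by $\tau_{n,p}$ yields $(\log\lambda_{\min}(W_n)-\log\mu_{n,p})/\tau_{n,p} = Y_n + O_P(\tau_{n,p})$, which converges in distribution to $-\xi$ by Slutsky's theorem. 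Replacing the centering $\log\mu_{n,p}$ by $\nu_{n,p} = \log\mu_{n,p} + \tfrac{1}{8}\tau_{n,p}^{2}$ (see (\ref{eq:TauNu})) only shifts the left-hand side by $-\tfrac{1}{8}\tau_{n,p}\to 0$, and so does not alter the limit; finally, $-\xi$ has distribution $G$ by definition.

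The role of the refined summand $\tfrac{1}{8}\tau_{n,p}^{2}$ in $\nu_{n,p}$ is not to affect the asymptotic law but to absorb next-order terms in the soft-edge expansion, thereby improving the finite-sample accuracy of the Tracy-Widom approximation. The main obstacle for Ma's sharper statement is therefore not the weak limit itself — which follows from the delta-method argument sketched above, using the classical edge theorem as a black box — but rather the identification of the precise value $\tfrac{1}{8}$ of this constant and the proof that the remainder bound is genuinely of the improved order. These ingredients are established via a careful analysis of corrections in the soft-edge density expansion, for instance through the tridiagonal representation of the Wishart ensemble and estimates on the stochastic Airy operator; once those corrections are matched, the theorem above is a direct consequence.
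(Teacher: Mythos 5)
This theorem is quoted from Ma (2012); the paper itself gives no proof of it, so there is no internal argument to compare against. Taken on its own terms, your reduction is sound: writing $\lambda_{\min}(W_n)=\mu_{n,p}(1+\tau_{n,p}Y_n)$ with $Y_n=(\lambda_{\min}(W_n)-\mu_{n,p})/\sigma_{n,p}=O_P(1)$ and $\tau_{n,p}=\sigma_{n,p}/\mu_{n,p}\asymp n^{-2/3}\to 0$, the expansion $\log(1+\tau_{n,p}Y_n)=\tau_{n,p}Y_n+O_P(\tau_{n,p}^2)$ together with Slutsky gives the stated limit, and the extra $\tfrac18\tau_{n,p}^{2}$ in $\nu_{n,p}$ contributes only $\tfrac18\tau_{n,p}\to 0$ after normalization. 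You are also right that the real content of Ma's theorem beyond the weak limit is the $O(p^{-2/3})$ convergence rate, which your argument does not (and cannot) deliver and which is precisely what the $-\tfrac12$ shifts and the $\tfrac18\tau_{n,p}^2$ correction are designed to achieve.

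Two small points. First, the black box you invoke is a \emph{soft-edge} result, not a hard-edge one: for $\gamma>1$ the lower edge of the Marchenko--Pastur support is strictly positive and the fluctuations of $\lambda_{\min}$ are Tracy--Widom; the hard-edge (Bessel) regime arises only as $\gamma\to 1$. Your opening sentence calls it a hard-edge theorem, which contradicts your own later (correct) usage. Second, the classical statements of that edge theorem use the centerings $(\sqrt{n}-\sqrt{p})^{2}$ and the corresponding scale, not Ma's shifted $\mu_{n,p},\sigma_{n,p}$; to state the linear-scale limit with Ma's constants, as you do, you should add the one-line check that the change of centering is $O(1)=o(\sigma_{n,p})$ and the change of scale is a factor $1+o(1)$, so the weak limit is unaffected. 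With those two remarks your argument is a complete derivation of the weak-convergence form from the known edge theorem, with the quantitative refinement correctly deferred to Ma's analysis.
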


As a consequence of Ma's theorem we obtain:
\begin{eqnarray*}
\P(\lambda_{\min}(W_{n-1})>\frac{n}{2})&=&\P\left(\frac{\log \lambda_{\min}(W_{n-1})-\nu_{n-1,p}}{\tau_{n-1,p}}>\frac{\log (n/2)-\nu_{n-1,p}}{\tau_{n-1,p}}\right)\\
&\sim& 1-G\left(\frac{\log (n/2)-\nu_{n-1,p}}{\tau_{n-1,p}}\right),
\end{eqnarray*}
where, for sequences $a_{n,p}$ and $b_{n,p}$, we use the notation $a_{n,p}\sim b_{n,p}$ to denote that $a_{n,p}/b_{n,p}\to 1$ as $n\to \infty$ and $n/p\to \gamma\in (1,\infty)$. So, we conclude that
\begin{equation}
\label{eq:mainApprox}
\P(\Sigma^* \in \Delta_{2S_{n}})\quad\sim\quad F\left(\frac{\nu_{n-1,p}-\log (n/2)}{\tau_{n-1,p}}\right).
\end{equation}
Ma~\cite[Theorem 2]{ma2012} also proved that if $p$ is even then the reflected Tracy-Widom approximation in Theorem~\ref{th:MuMinimal} is of order $O(p^{-2/3})$; numerical simulations in~\cite{ma2012} suggest that this result holds also if $p$ is odd. In fact, this convergence rate holds in more general scenarios with the standard normalization without logarithms~\cite{feldheim2010}. However, our simulations show that Ma's approach using logarithms is more accurate  for small values of $p$. 

\begin{figure}[t!]
\centering
\subfigure[$p=3$]{\includegraphics[scale=0.2]{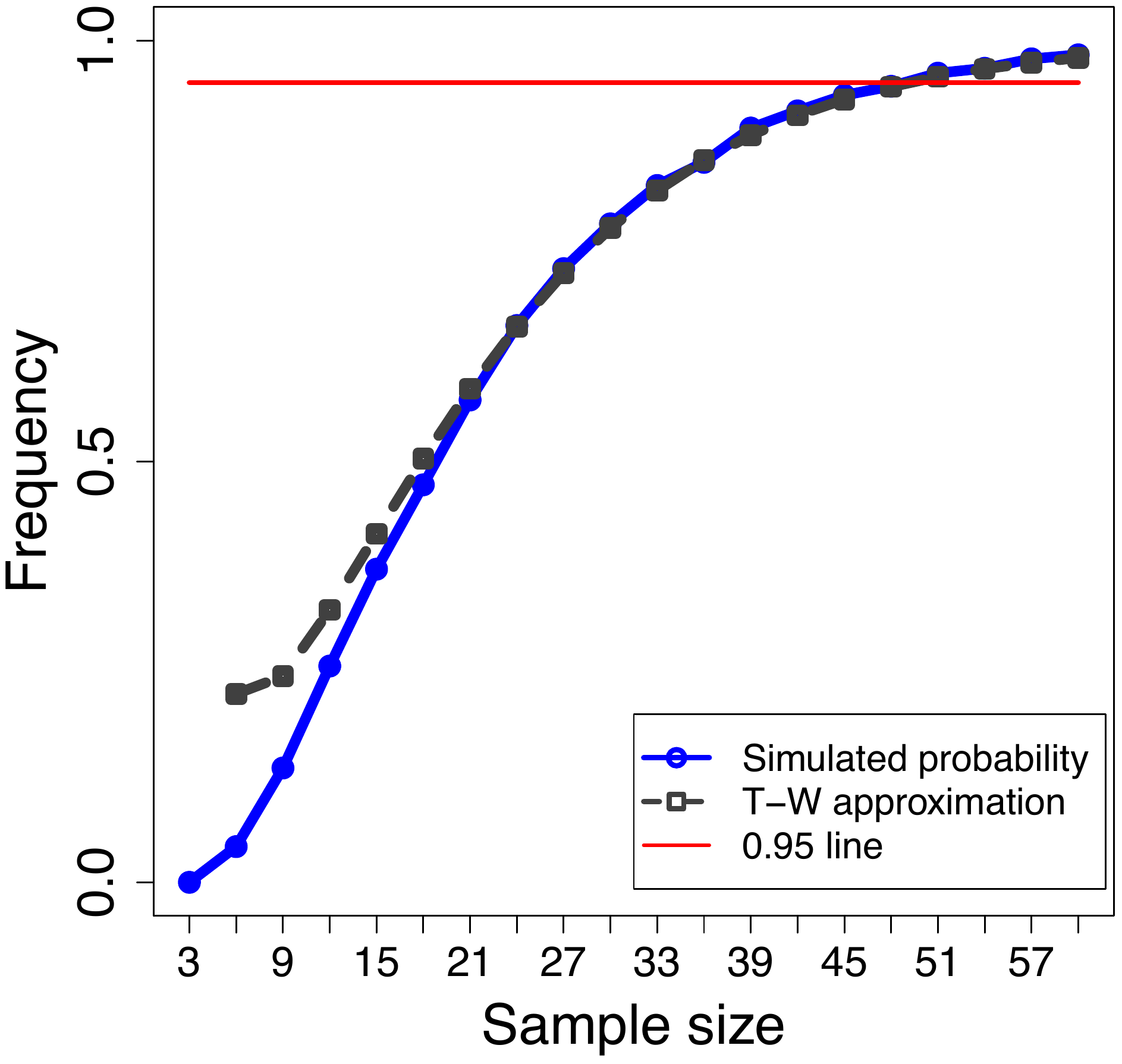}}\quad
\subfigure[$p=5$]{\includegraphics[scale=0.2]{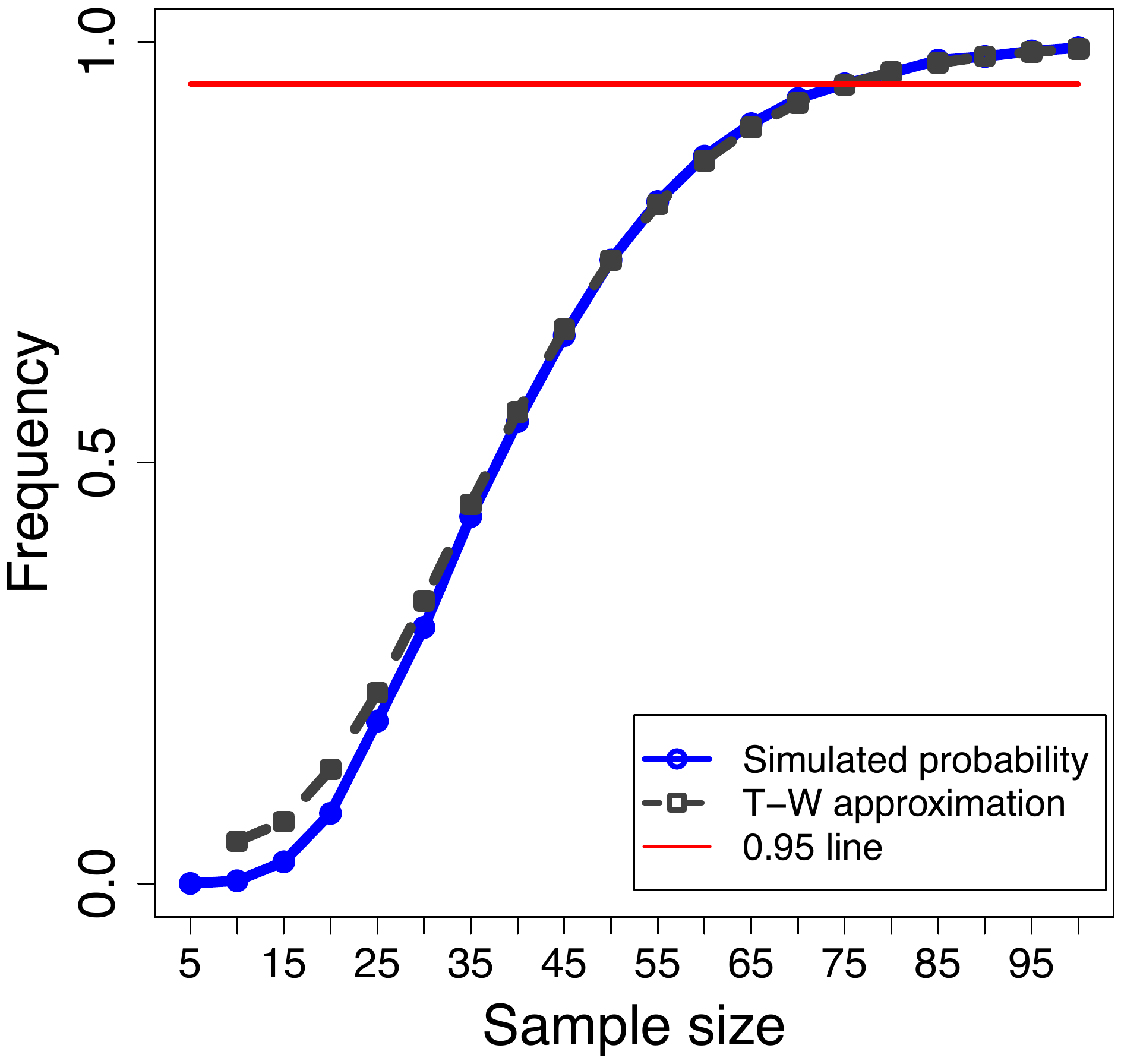}}\quad
\subfigure[$p=10$]{\includegraphics[scale=0.19]{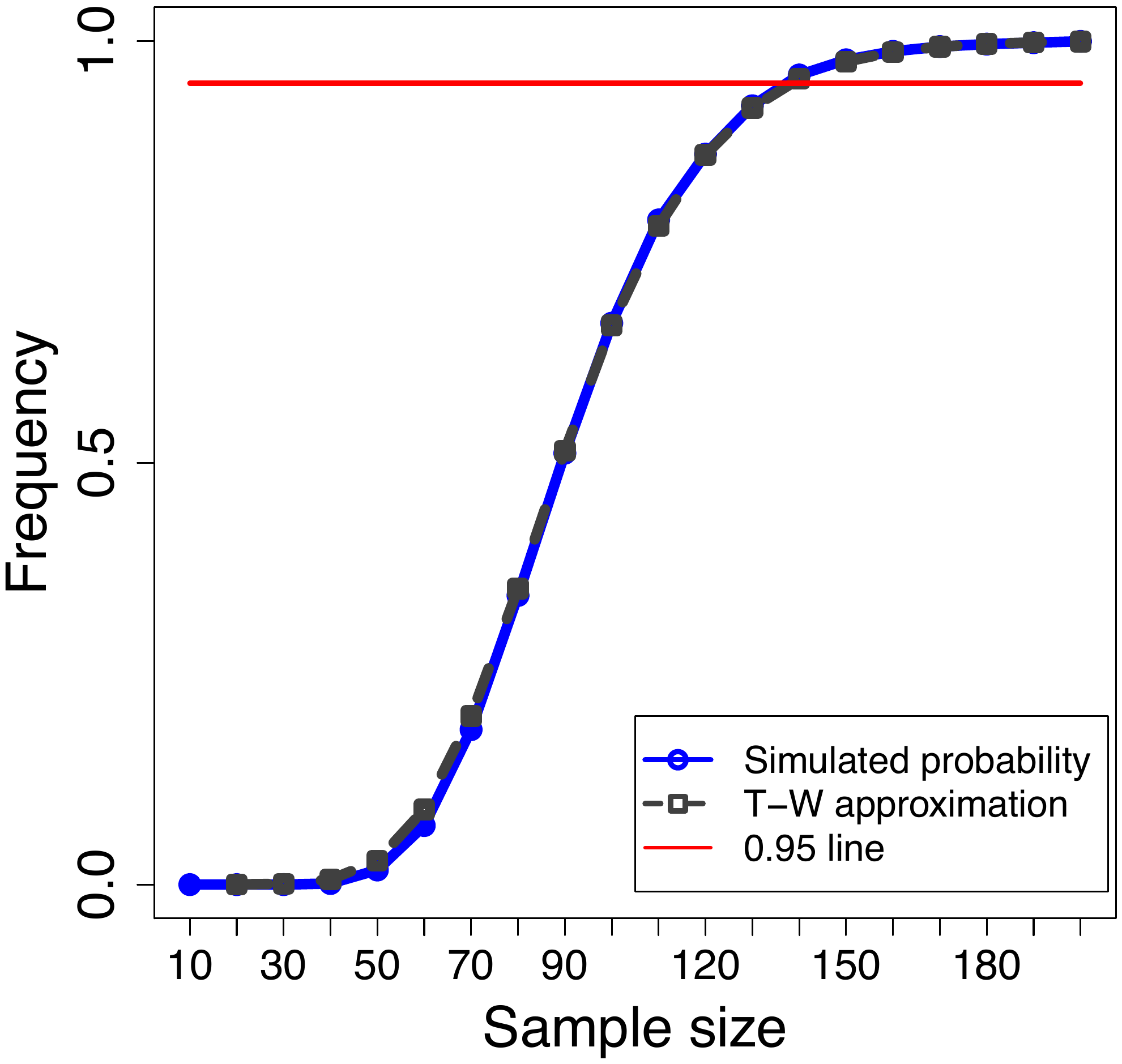}} 
\caption{In each plot, the dimension $p\in \{3,5,10\}$ is fixed and the number of samples $n$ varies between $p$ and $20p$. The solid blue line provides the simulated values of $\P(\lambda_{\min}(W_{n-1})>n/2)$, i.e., the probability that a standard Wishart random variable with $n-1$ degrees of freedom has a minimal eigenvalue that is at least $n/2$, and the dashed black line is the corresponding approximation by the reflected Tracy-Widom distribution.}
\label{fig:TW1}
\end{figure}

Another important consequence of Theorem \ref{th:MuMinimal} is that $\P(\Sigma^* \in \Delta_{2S_{n}})$ is well-approximated by a function that depends only on the ratio $n/p$. In Figure~\ref{fig:TW1}, we provide graphs from simulated data to analyze the accuracy of the approximation by the Tracy-Widom law for small $p$. By~Proposition~\ref{lem:AltMinimalEig}, $\P(\Sigma^* \in \Delta_{2S_{n}})\;=\;\P(\lambda_{\min}(W_{n-1})> \frac{n}{2})$, a quantity which does not depend on $\Sigma^*$, so we used the white Wishart distribution in our simulations. 

In each plot in Figure \ref{fig:TW1}, the dimension $p$ is fixed and the sample size $n$ varies between $p$ and $20p$. The solid blue curve is the simulated probability that $\lambda_{\min}(W_{n-1})>n/2$  and the dashed black curve is the corresponding approximation resulting from the Tracy-Widom distribution, for which we used the \texttt{R} package \texttt{RMTstat}~\cite{Johnstone:2009fj}. The values for each pair $(p,n)$ are based on $10{,}000$ replications. Figure~\ref{fig:TW1} shows that the approximation is extremely accurate even for small values of $p$. For large values of $n/p$ the two curves virtually coincide even for $p=3$. The approximation is nearly perfect over the whole range of $n$ as long as $p\geq 5$.

As an example, suppose we wish to obtain $\P(\Sigma^* \in \Delta_{2S_n}) > 0.95$. We use the approximation in (\ref{eq:mainApprox}) and the fact that $F(0.98)\approx 0.95$ to compute $n$ such that
\begin{equation}
\label{eq:ineq095}
\frac{\nu_{n-1,p}-\log(n/2)}{\tau_{n-1,p}}\;>\;0.98
\end{equation}
for given $p$. In Figure~\ref{minimalsamplesizes}, we provide for various values of $p$ the minimal sample size resulting in $\P(\Sigma^* \in \Delta_{2S_n})>0.95$. 
\begin{figure}[b!]
\caption{The minimal sample sizes resulting in $\P(\Sigma^* \in \Delta_{2S_n}) > 0.95$.}
\label{minimalsamplesizes}
\centering
\vspace{0.3cm}
\begin{tabular}{c | c c cc c cc }
$p$     &  3 & 5 &  10 &  20 &  100 &  1000  & \mbox{large}\\ \hline
$n\geq$ & 51 & 77 & 140 & 262 & 1214 & 11759 & $11.657\, p$
\end{tabular}
\end{figure}

It is interesting to study the behavior of the curves in Figure~\ref{fig:TW1} for increasing values of $p$. Our simulations show that both curves converge to the step function which is zero for ${\gamma}<\gamma^{*}$ and is one for ${\gamma}\geq \gamma^{*}$, where $\gamma^{*}$ is some fixed number. This observation suggests that for large $p$, the minimal sample size such that $\P(\Sigma^* \in \Delta_{2S_n}) > 0.95$ is equal to the minimal sample size such that $\P(\Sigma^* \in \Delta_{2S_n}) > 0$. In the following result, we formalize this observation.

\begin{prop}
\label{prop:step}
Define a sequence $(f_{n,p})_{n,p=1}^{\infty}$ of numbers by
$$
f_{n,p}\;:=\;  F\left(\frac{\nu_{n-1,p}-\log (n/2)}{\tau_{n-1,p}}\right),
$$ 
where $\nu_{n-1,p}$, $\tau_{n-1,p}$ are defined as in (\ref{eq:TauNu}) and $F$ is the Tracy-Widom distribution. Define $\gamma^{*}=6+4\sqrt{2}\approx 11.657$. Suppose that $n,p\to \infty$ and $n/p\to \gamma>1$. Then
$$
f_{n,p}\quad\to \quad \left\{\begin{array}{ll}
1, & \mbox{if } \;{\gamma}\geq \gamma^{*}\\
0, & \mbox{if } \;{\gamma}<\gamma^{*}.\end{array}
\right.
$$
\end{prop}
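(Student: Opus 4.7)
The plan is to reduce the statement to an asymptotic analysis of the argument of $F$ and then invoke continuity properties of the Tracy-Widom CDF. Writing $g_{n,p} := (\nu_{n-1,p}-\log(n/2))/\tau_{n-1,p}$, and using that $F$ is continuous on $\R$ with $F(-\infty)=0$ and $F(+\infty)=1$, it suffices to show that $g_{n,p}\to +\infty$ when $\gamma>\gamma^{*}$ and $g_{n,p}\to -\infty$ when $\gamma<\gamma^{*}$.

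To carry this out cleanly, I would introduce $a:=(n-3/2)^{1/2}$ and $b:=(p-1/2)^{1/2}$, so that
$\mu_{n-1,p}=(a-b)^{2}$, $\sigma_{n-1,p}=(a-b)^{4/3}(ab)^{-1/3}$, and $\tau_{n-1,p}=[ab(a-b)^{2}]^{-1/3}$. As $n/p\to \gamma>1$ one has $a\sim \sqrt{n}$, $b\sim \sqrt{p}$, and $a-b\sim \sqrt{p}(\sqrt{\gamma}-1)$, so
$\tau_{n-1,p}\sim \gamma^{-1/6}(\sqrt{\gamma}-1)^{-2/3}\, p^{-2/3}\to 0^{+}$.
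Expanding $\log(n/2)=2\log a-\log 2+O(1/n)$ and $\log \mu_{n-1,p}=2\log(a-b)$, one obtains
\[
\nu_{n-1,p}-\log(n/2)=2\log(1-b/a)+\log 2+\tfrac{1}{8}\tau_{n-1,p}^{2}+O(1/n),
\]
and the main term tends to $\log h(\gamma)$, where $h(\gamma):=2(1-1/\sqrt{\gamma})^{2}=2(\sqrt{\gamma}-1)^{2}/\gamma$, while the correction terms are $o(1)$.

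The next step is to identify $\gamma^{*}$ via $h(\gamma^{*})=1$. Solving $2(\sqrt{\gamma}-1)^{2}=\gamma$ with $\gamma>1$ gives $\sqrt{\gamma^{*}}=\sqrt{2}/(\sqrt{2}-1)=2+\sqrt{2}$, i.e., $\gamma^{*}=(2+\sqrt{2})^{2}=6+4\sqrt{2}$, matching the definition. A short calculation shows $h'(\gamma)=2(\sqrt{\gamma}-1)/\gamma^{2}>0$ for $\gamma>1$, so $h$ is strictly increasing on $(1,\infty)$; hence $h(\gamma)>1$ iff $\gamma>\gamma^{*}$ and $h(\gamma)<1$ iff $\gamma<\gamma^{*}$. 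Combining this with the asymptotics above: for $\gamma>\gamma^{*}$ the numerator of $g_{n,p}$ tends to $\log h(\gamma)>0$ and the denominator $\tau_{n-1,p}\to 0^{+}$, so $g_{n,p}\to +\infty$ and $f_{n,p}=F(g_{n,p})\to 1$; for $\gamma<\gamma^{*}$ the numerator tends to a negative constant and $g_{n,p}\to -\infty$, so $f_{n,p}\to 0$.

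The main obstacle, and the delicate step, is the boundary case $\gamma=\gamma^{*}$, where $\log h(\gamma)=0$ and the limit of $g_{n,p}$ is determined by subleading terms. Here one has to track the three contributions $2\log(1-b/a)+\log 2$, the positive correction $\tfrac{1}{8}\tau_{n-1,p}^{2}$, and the $O(1/n)$ remainder, each divided by $\tau_{n-1,p}\sim p^{-2/3}$. The strategy would be to Taylor-expand $\log h(n/p)$ about $\gamma^{*}$, using $h(\gamma^{*})=1$ and $h'(\gamma^{*})>0$, and exploit that at $\gamma=\gamma^{*}$ the ratio $n/p$ approaches $\gamma^{*}$; together with the strictly positive $\tfrac{1}{8}\tau_{n-1,p}^{2}$ correction one can verify $g_{n,p}\to +\infty$ in the sense required so that $f_{n,p}\to 1$. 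This is the only place where a careful finite-sample expansion, rather than purely leading-order asymptotics, is needed.
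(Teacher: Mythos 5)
Your treatment of the two generic cases is correct and is essentially the paper's argument with the ``direct computation'' actually carried out: the reduction to the asymptotics of $g_{n,p}$, the identification $\tau_{n-1,p}\sim \gamma^{-1/6}(\sqrt{\gamma}-1)^{-2/3}p^{-2/3}\to 0^{+}$, the limit $\log h(\gamma)$ of the numerator with $h(\gamma)=2(1-1/\sqrt{\gamma})^{2}$, the solution $h(\gamma^{*})=1$ giving $\gamma^{*}=(2+\sqrt 2)^{2}=6+4\sqrt 2$, and the monotonicity of $h$ all check out. For $\gamma\neq\gamma^{*}$ this yields $g_{n,p}\to\pm\infty$ and hence the stated limits of $f_{n,p}$.

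The gap is the boundary case $\gamma=\gamma^{*}$, which you correctly flag as delicate but then resolve with a claim that does not hold. The $\tfrac18\tau_{n-1,p}^{2}$ correction, after division by $\tau_{n-1,p}$, contributes only $\tfrac18\tau_{n-1,p}\to 0$, so it cannot drive $g_{n,p}$ to $+\infty$; and ``$n/p$ approaches $\gamma^{*}$'' gives no control on the sign or rate of $\log h(n/p)$. Concretely, along the paper's own sequence $n=\gamma^{*}p$ one finds
$$
\frac{2\mu_{n-1,p}}{n}\;=\;h(\gamma^{*})+\frac{C}{p}+O(p^{-2}),\qquad C=-\frac{4}{\gamma^{*}}+\frac{1}{\sqrt{\gamma^{*}}}+\frac{3}{\gamma^{*3/2}}\approx 0.025,
$$
so the numerator of $g_{n,p}$ is $C/p+O(p^{-4/3})=\Theta(p^{-1})$ while $\tau_{n-1,p}=\Theta(p^{-2/3})$; hence $g_{n,p}=\Theta(p^{-1/3})\to 0$ and $f_{n,p}\to F(0)\in(0,1)$, not $1$. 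For other sequences with $n/p\to\gamma^{*}$ the limit depends on the rate (e.g.\ $n/p=\gamma^{*}-cp^{-1/3}$ gives $g_{n,p}\to-\infty$). So the boundary case cannot be established by any refinement of this expansion; it is in fact a defect of the statement (and of the paper's one-line proof, which asserts the limit is $+\infty$ for $\gamma\ge\gamma^{*}$ without justification). The honest conclusion available from your computation is the dichotomy for $\gamma>\gamma^{*}$ versus $\gamma<\gamma^{*}$, consistent with the paper's alternative derivation via $\lambda_{\min}(S_n)\to(1-1/\sqrt{\gamma})^{2}$, which also only gives a strict inequality.
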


\begin{proof}
 Consider the expression 
$$
\frac{\nu_{n-1,p}-\log (n/2)}{\tau_{n-1,p}},
$$
and fix $n=\gamma p$, letting $p\to \infty$. Direct computation shows that the limit of this expression is $+\infty$ if $\gamma\geq \gamma^{*}$ and $-\infty$ otherwise. The result follows from $F(-\infty)=0$ and \mbox{$F(+\infty)=1$.}
\end{proof}

An alternative way to derive $\gamma^{*}=6+4\sqrt{2}$ in Proposition~\ref{prop:step} is to use the fact that $\lambda_{\min}(S_{n})$ converges almost surely to $(1-1/\sqrt{\gamma})^{2}$ \cite[Theorem 2.16]{bai99}. We have $(1-1/\sqrt{\gamma})^{2}>1/2$ if and only if $\gamma>6+4\sqrt{2}$. Proposition~\ref{prop:step} shows that the choice of the threshold $0.95$ for \mbox{constructing} Table \ref{minimalsamplesizes} is immaterial for large $p$. Moreover, note that the results in this section do not depend on the linearity of the model and can be applied to any Gaussian model where the interest lies in estimating the covariance matrix.

\subsection{The global maximum of the log-likelihood function}

We denote by $\hat{v}$ the global maximum of the log-likelihood function $\ell(v)$. The corresponding covariance matrix is denoted by $\Sigma_{\hat{v}}$. 
In this section it is convenient to work with the normalized version of the log-likelihood function
$$
\bar\ell(\Sigma)\;\;=\;\;\log\det(S_n \Sigma^{-1})-{\rm tr}(S_n\Sigma^{-1})+p,
$$
which differs from the original log-likelihood function only by a term that does not depend on $\Sigma$. By construction, $\bar\ell(S_n)=0$. Denoting by $\lambda_1\geq\cdots\geq \lambda_p$ the eigenvalues of $\Sigma^{-1/2}S_n\Sigma^{-1/2}$ and assuming that $S_n$ has full rank,  then it holds that
 $$\bar\ell(\Sigma)\;\;=\;\;\sum_{i=1}^p (\log\lambda_i-\lambda_i+1).$$ Because $\log\lambda_i-\lambda_i+1\leq 0$ for all $i=1,\ldots,p$ and it is zero only if $\lambda_i=1$, we obtain that $\bar\ell(\Sigma)<0$ for all $\Sigma\neq S_n$; so $S_n$ is the unique global maximizer. Such analysis of the Gaussian likelihood is typically attributed to \cite{watsonMLE}; see also~\cite[Section~2.2.3]{anderson1985maximum}.

Note that bounds on the value of $\bar\ell(\Sigma)$ provide bounds on the values of $\lambda_i$. In particular, if $\bar\ell(\Sigma)>-\epsilon$ for $\epsilon>0$, we obtain that $\sum_i(\lambda_i-\log\lambda_i-1)\leq \epsilon$. Since each summand is nonnegative, this implies that $0\leq \lambda_i-\log\lambda_i-1\leq\epsilon$ for each $i$. The function $f(x) = x-1-\log(x)$ is non-negative for $x>0$, strictly decreasing for $0<x<1$, reaches a global minimum at $x=1$, and is strictly increasing for $x>1$. This means, in particular, that the condition $\bar\ell(\Sigma)>-\epsilon$ provides a lower bound on $\lambda_p$ and an upper bound on $\lambda_1$. We now provide a basic result about the existence of the MLE.

\begin{prop}\label{prop:MLEexists}
Let $\mathcal{M}\subset\mathbb{S}_{\succ 0}^p$ be closed in $\mathbb{S}_{\succ 0}^p$ and non-empty. If the sample covariance matrix $S_n$ has full rank, then the MLE  over $\mathcal{M}$ exists. In particular, the MLE exists for any linear covariance model $\mathcal M(\mathcal G)$ with probability 1 when $n\geq p$.
\end{prop}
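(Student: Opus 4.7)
The approach is the classical coercivity argument: I will show that the normalized log-likelihood $\bar\ell$ is continuous on $\mathbb{S}^p_{\succ 0}$ and that its super-level sets, intersected with $\mathcal{M}$, are compact. The existence of the maximizer then follows from the extreme value theorem, and the linear case reduces to checking that $\mathcal{M}(\mathcal{G})$ fits the hypotheses.

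First I would fix any $\Sigma_{0}\in\mathcal{M}$ (which exists by non-emptiness) and set $c=\bar\ell(\Sigma_{0})\leq 0$, so that the super-level set
$$
L_{c}\;=\;\{\Sigma\in\mathcal{M}\,:\,\bar\ell(\Sigma)\geq c\}
$$
is non-empty. By continuity of $\bar\ell$ on $\mathbb{S}^p_{\succ 0}$ and the assumption that $\mathcal{M}$ is closed in $\mathbb{S}^p_{\succ 0}$, the set $L_{c}$ is closed in $\mathbb{S}^p_{\succ 0}$. Clearly $\sup_{\Sigma\in\mathcal{M}}\bar\ell(\Sigma)=\sup_{\Sigma\in L_{c}}\bar\ell(\Sigma)$, so it suffices to show that $L_{c}$ is compact (as a subset of $\mathbb{S}^p$).

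The key step, and main technical point, is to convert the lower bound on $\bar\ell$ into a uniform two-sided bound on $\Sigma$. Apply the observation already made in the text preceding the proposition: if $\bar\ell(\Sigma)\geq c$, write $\epsilon=-c\geq 0$ and note that the eigenvalues $\lambda_{1}\geq\cdots\geq\lambda_{p}$ of $\Sigma^{-1/2}S_{n}\Sigma^{-1/2}$ each satisfy $0\leq\lambda_{i}-\log\lambda_{i}-1\leq\epsilon$. Since $f(x)=x-1-\log x$ is strictly decreasing on $(0,1)$ and strictly increasing on $(1,\infty)$, there exist constants $0<a\leq 1\leq b<\infty$ depending only on $\epsilon$ such that $a\leq\lambda_{i}\leq b$ for every $i$. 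Equivalently,
$$
a\,\mathbb{I}_{p}\;\preceq\;\Sigma^{-1/2}S_{n}\Sigma^{-1/2}\;\preceq\;b\,\mathbb{I}_{p},
$$
which rearranges to $\tfrac{1}{b}S_{n}\preceq\Sigma\preceq\tfrac{1}{a}S_{n}$. Because $S_{n}$ has full rank by hypothesis, this sandwich gives
$$
\frac{\lambda_{\min}(S_{n})}{b}\,\mathbb{I}_{p}\;\preceq\;\Sigma\;\preceq\;\frac{\lambda_{\max}(S_{n})}{a}\,\mathbb{I}_{p},
$$
so $L_{c}$ is contained in a compact subset of $\mathbb{S}^p_{\succ 0}$ on which the eigenvalues of $\Sigma$ are bounded away from both $0$ and $\infty$. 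Since $L_{c}$ is closed in $\mathbb{S}^p_{\succ 0}$ and contained in this compact set, it is compact in $\mathbb{S}^p$, and the continuous function $\bar\ell$ attains its maximum on it. That maximum is the MLE.

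For the linear covariance model $\mathcal{M}(\mathcal{G})$, I would note that $\mathcal{M}(\mathcal{G})$ is the intersection of the affine subspace $\{G_{0}+\sum_{i=1}^{r}v_{i}G_{i}:v\in\R^{r}\}$, which is closed in $\mathbb{S}^p$, with the open cone $\mathbb{S}^p_{\succ 0}$; hence $\mathcal{M}(\mathcal{G})$ is closed in $\mathbb{S}^p_{\succ 0}$. It is non-empty by our standing assumption on $\Theta_{\mathcal{G}}$. Finally, for $n\geq p$ a Gaussian sample yields $S_{n}\succ 0$ with probability one, so the first part applies and the MLE exists almost surely. The only subtlety worth flagging is the coercivity argument above — everything else is either continuity, the assumed closedness of $\mathcal{M}$, or the standard full-rank fact for $S_{n}$.
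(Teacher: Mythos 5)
Your proof is correct and follows essentially the same route as the paper's: both show the super-level set $\{\bar\ell\geq c\}$ intersected with the closed set $\mathcal{M}$ is compact by converting the likelihood bound into a two-sided bound on the spectrum of $\Sigma^{-1/2}S_n\Sigma^{-1/2}$, then invoke the extreme value theorem. Your Loewner-order sandwich $\tfrac{1}{b}S_n\preceq\Sigma\preceq\tfrac{1}{a}S_n$ is a slightly more explicit way of executing the same coercivity step that the paper phrases via the homeomorphism $\Sigma\mapsto S_n^{1/2}\Sigma^{-1}S_n^{1/2}$.
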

\begin{proof}
Let $c\leq 0$ be the value of the normalized log-likelihood function at some fixed point in $\mathcal{M}$. Note that the level set $\{\Sigma\in \mathbb{S}_{\succ 0}^p:\,\bar\ell(\Sigma)\geq c\}$ is compact. The constraint $\bar\ell(\Sigma)\geq c$ implies that the whole spectrum of $S_n^{1/2}\Sigma^{-1}S_n^{1/2}$ must lie in a compact subset, which implies  that $S_n^{1/2}\Sigma^{-1}S_n^{1/2}$ lies in a compact subset of $\mathbb S^p_{\succ 0}$. Since $S_n$ has full rank, $\Sigma\mapsto S_n^{1/2}\Sigma^{-1}S_n^{1/2}$ is a continuous bijection whose inverse is also continuous. This implies that $\Sigma$ is constrained to lie in a compact subset. By construction, the level set $\ell(\Sigma)\geq c$ has non-empty intersection with the closed subset $\mathcal{M}$ and hence this intersection forms a compact subset of $\mathbb{S}_{\succ 0}^p$. Then the extreme value theorem guarantees that the log-likelihood function attains its maximum, which completes the proof.
\end{proof}

In the following theorem, we apply certain concentration of measure results 
given in the appendix to obtain finite-sample bounds on the probability that the region $\Delta_{2S_n}$ contains $\Sigma_{\hat{v}}$. By construction, if $\Sigma_{\hat v}$ exists then $\Sigma_{\hat v}\succ 0$. Hence it suffices to analyze the probability that $2S_{n}-\Sigma_{\hat v}\succ 0$.

\begin{thm}
Let $\hat{v}$ be the MLE based on a full-rank sample covariance matrix $S_n$ of a linear Gaussian covariance model with parameter $v^*\in \mathbb{R}^r$ and corresponding true covariance matrix $\Sigma_{v^*}$. Let \mbox{$\epsilon = -\frac{1}{2}\log(\frac{1}{2})-\frac{1}{4} \approx 0.1$.} Then,
\begin{align*}
\P(\Sigma_{\hat{v}} \in \Delta_{2S_n})\geq 1-&\frac{2(n-1)p}{n^2\epsilon^2} -\frac{\sum\limits_{i=1}^p \psi_1(\frac{n-i}{2}) +\Big(p\log(\frac{2}{n-1})+\sum\limits_{i=1}^p \psi(\frac{n-i}{2})\Big)^2}{\epsilon^2},
\end{align*}
where $\psi(\cdot)$ and $\psi_1(\cdot)$ denote the digamma and the trigamma function, respectively. Moreover, as $n\to\infty$,
$$1-\frac{2(n-1)p}{n^2\epsilon^2} -\frac{\sum\limits_{i=1}^p \psi_1(\frac{n-i}{2}) +\Big(p\log(\frac{2}{n-1})+\sum\limits_{i=1}^p \psi(\frac{n-i}{2})\Big)^2}{\epsilon^2} = 1-\frac{4p}{n\epsilon^2} + O\Big(\frac{1}{n^2}\Big).$$ 
\end{thm}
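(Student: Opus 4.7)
The plan is to reduce the containment $\Sigma_{\hat v}\in\Delta_{2S_n}$ to a lower bound on the normalized log-likelihood $\bar\ell$ at the true parameter, and then to bound the complementary event by Chebyshev--Markov applied to the two pieces of $\bar\ell$.

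First, I would exploit the analysis of $\bar\ell$ developed in the two paragraphs preceding Proposition~\ref{prop:MLEexists}. Letting $\lambda_1\ge\cdots\ge\lambda_p$ denote the eigenvalues of $\Sigma_{\hat v}^{-1/2}S_n\Sigma_{\hat v}^{-1/2}$, we have $\bar\ell(\Sigma_{\hat v})=\sum_i(\log\lambda_i-\lambda_i+1)$ with every summand $\leq 0$. Writing $f(x):=x-1-\log x$, strictly decreasing on $(0,1)$ with $f(1/2)=\log 2-\tfrac12=2\epsilon$, a termwise comparison gives
\[
\bar\ell(\Sigma_{\hat v})>-2\epsilon \;\Longrightarrow\; f(\lambda_p)<f(1/2)\;\Longrightarrow\;\lambda_p>\tfrac12\;\Longleftrightarrow\;2S_n\succ\Sigma_{\hat v}.
\]
Because $\hat v$ is the MLE and $v^*\in\Theta_{\mathcal G}$, we have $\bar\ell(\Sigma_{\hat v})\geq \bar\ell(\Sigma_{v^*})$, so
\[
\P(\Sigma_{\hat v}\notin\Delta_{2S_n})\;\le\;\P\bigl(-\bar\ell(\Sigma_{v^*})\geq 2\epsilon\bigr).
\]

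Next, I would introduce $T=\Sigma_{v^*}^{-1/2}S_n\Sigma_{v^*}^{-1/2}$, so that $nT\sim \mathcal{W}_p(n-1,\mathbb{I}_p)$ and $-\bar\ell(\Sigma_{v^*})=(\tr T-p)+(-\log\det T)$. A union bound splitting $2\epsilon$ into two equal halves gives
\[
\P\bigl(-\bar\ell(\Sigma_{v^*})\ge 2\epsilon\bigr)\;\le\;\P(\tr T-p\ge \epsilon)+\P(-\log\det T\ge \epsilon).
\]
For the first term, $\E[\tr T]=(n-1)p/n\le p$ and $\var(\tr T)=2(n-1)p/n^2$, so Chebyshev yields $\frac{2(n-1)p}{n^2\epsilon^2}$. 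For the second, Bartlett's decomposition gives $\log\det(nT)\stackrel{d}{=}\sum_{i=1}^p\log\chi^2_{n-i}$, producing the variance $\sum_i\psi_1((n-i)/2)$ and, after matching the renormalization used by the appendix's concentration lemmas, a mean of the form $p\log\tfrac{2}{n-1}+\sum_i\psi(\tfrac{n-i}{2})$. Markov applied to $(\log\det T)^2$ (with the corresponding center) then delivers the second summand of the stated bound.

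For the asymptotics I would use $\psi(x)=\log x-\tfrac{1}{2x}+O(x^{-2})$ and $\psi_1(x)=\tfrac{1}{x}+O(x^{-2})$ at $x=(n-i)/2$. The variance term contributes $\sum_i\psi_1((n-i)/2)=2p/n+O(n^{-2})$, and a telescoping computation, using $\sum_i\log(n-i)=p\log(n-1)-\tfrac{p(p-1)}{2(n-1)}+O(n^{-2})$, shows that the bias $p\log\tfrac{2}{n-1}+\sum_i\psi((n-i)/2)$ is $O(n^{-1})$, so that its square contributes only $O(n^{-2})$. Combining these with the $\tr T$ piece $2(n-1)p/(n^2\epsilon^2)= 2p/(n\epsilon^2)+O(n^{-2})$ gives the advertised $4p/(n\epsilon^2)+O(n^{-2})$.

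The delicate point is the Markov step on $\log\det T$: obtaining the bias as $p\log\tfrac{2}{n-1}+\sum_i\psi(\tfrac{n-i}{2})$ rather than the naive $p\log\tfrac{2}{n}+\sum_i\psi(\tfrac{n-i}{2})$ that falls out of $\E[\log\det T]$ requires applying the inequality after renormalizing $T$ by a factor of $n/(n-1)$ (equivalently, using the unbiased sample covariance as the pivot). Carefully tracking this factor through the Wishart degrees of freedom is precisely the bookkeeping one expects the appendix's concentration estimates to handle.
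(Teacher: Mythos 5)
Your proposal is correct and follows essentially the same route as the paper's proof: reduce $\Sigma_{\hat v}\in\Delta_{2S_n}$ to $\bar\ell(\Sigma_{v^*})>-2\epsilon$ via the MLE inequality and the monotonicity of $x\mapsto x-1-\log x$, split $\bar\ell$ into the trace and log-determinant pieces of the Wishart matrix $W_{n-1}$, apply Chebyshev to each using the $\chi^2_{(n-1)p}$ law of the trace and the Bartlett/digamma--trigamma moments of $\log\det$, combine with $\P(A\cap B)\geq\P(A)+\P(B)-1$, and expand asymptotically. The $p\log\frac{2}{n-1}$ versus $p\log\frac{2}{n}$ issue you flag is an internal inconsistency between the paper's theorem statement and its own proof (which uses $p\log\frac{2}{n}$), and either normalization yields the same $1-\frac{4p}{n\epsilon^2}+O(n^{-2})$ expansion since the squared bias is $O(n^{-2})$ in both cases.
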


\begin{proof}
By Proposition \ref{prop:MLEexists} the MLE $\hat v$ exists. Note that $\Sigma_{\hat v}\in \Delta_{2S_n}$ if and only if the minimal eigenvalue of $\Sigma_{\hat v}^{-1/2}S_n\Sigma_{\hat v}^{-1/2}$, denoted by $\hat\lambda_p$, is greater than $1/2$. By the discussion preceding Proposition \ref{prop:MLEexists}, if $\bar\ell(\Sigma_{\hat v})>-2\epsilon$ for $\epsilon = -\frac{1}{2}\log(\frac{1}{2})-\frac{1}{4}$, then $0\leq \hat\lambda_p-\log\hat\lambda_p-1\leq -\log{\frac{1}{2}}-\frac{1}{2}$, and in particular $\hat\lambda_p >1/2$. Since $\Sigma_{\hat v}$ is the global maximizer, this implies that $\bar\ell(\Sigma_{\hat v})\geq \bar \ell (\Sigma_{v^*})$, and in particular that
$$
\P(\bar\ell(\Sigma_{v^*})>-2\epsilon) \;\;\leq\;\; \P(\bar\ell(\Sigma_{\hat v})>-2\epsilon)\;\;\leq\;\; \P(\hat \lambda_p>1/2)\;\;=\;\; \P(\Sigma_{\hat v}\in \Delta_{2S_n}).
$$	
Therefore a lower bound on $\P(\bar\ell(\Sigma_{v^*})>-2\epsilon)$ gives a valid lower bound on $\P(\Sigma_{\hat v}\in \Delta_{2S_n})$. We have
$$
\bar\ell(\Sigma_{v^*})\;\;=\;\;\log\det(S_n\Sigma_{v^*}^{-1})-({\rm tr}(S_n\Sigma_{v^*}^{-1})-p).
$$
Noting that 
$$
\tr(S_n\Sigma_{v^*}^{-1}) \;=\; \frac{1}{n}\tr(W_{n-1}),
$$
where $W_{n-1}\sim\mathcal{W}(n-1,\mathbb{I}_p)$ {and the equality is in distribution}, we apply Chebyshev's inequality (see (\ref{eq_trace}) in the Appendix) to obtain that, with probability at least $1-\frac{2(n-1)p}{n^2\epsilon^{2}}$,
\begin{equation}
\label{event1}
-\frac{1}{n}\,p-\epsilon \quad\leq\quad \tr(S_n\Sigma_{v^*}^{-1})-p\quad\leq\quad -\frac{1}{n}\,p+\epsilon.
\end{equation}
Moreover, 
$$
\log\det (S_n\Sigma_{v^*}^{-1}) \quad =\quad \log\det W_{n-1} - p\log(n),
$$
and we again apply Chebyshev's inequality (see (\ref{eq_det}) in the Appendix) to obtain that with probability at least
\begin{align*}
p_{\det} \; :=\;1-\frac{\sum_{i=1}^p \psi_1(\frac{n-i}{2}) +\left(p\log(\frac{2}{n})+\sum_{i=1}^p \psi(\frac{n-i}{2})\right)^2}{\epsilon^2},
\end{align*}
we have 
\begin{equation}
\label{event2}
 -\epsilon\quad\leq \quad\log\det (S_n\Sigma_{v^*}^{-1})\quad\leq\quad  +\epsilon.
\end{equation}
Note that the events (\ref{event1}) and (\ref{event2}) are not independent. We then apply the inequality $\P(A\cap B)\geq \P(A)+\P(B)-1$ to deduce that with probability at least $1-\frac{2(n-1)p}{n^2\epsilon^{2}} -p_{\det}$,
$$
\bar\ell(\Sigma_{v^*})\;\;>\;\;\frac{p}{n}-\epsilon\;\;>\;\;-\epsilon,
$$
which establishes that
$$
\P(\Sigma_{\hat v}\in \Delta_{2S_n})\;\;\geq\;\; 1-\frac{2(n-1)p}{n^2\epsilon^{2}} -p_{\det}.
$$
Since $1-\frac{2(n-1)p}{n^2\epsilon^{2}} -p_{\det} $ is an increasing function of $\epsilon$, we substitute $\epsilon = -\frac{1}{2}\log(\frac{1}{2})-\frac{1}{4}$, deducing that as $n\to\infty$,
\begin{eqnarray*}
1-\frac{2p}{n\epsilon^{2}} -p_{\det} &\!\!=\!\!& 1-\frac{2p}{n\epsilon^{2}} -\frac{p\!\left(\frac{2}{n}+O(\frac{1}{n^2})\!\right)\!+\!\left(p\log(\frac{2}{n}) + p\!\left(\log(\frac{n}{2})+O(\frac{1}{n})\!\right)\!\right)^2}{\epsilon^2}\\
&\!\!=\!\!& 1-\frac{4p}{n\epsilon^{2}} +O\Big(\frac{1}{n^2}\Big),
\end{eqnarray*}
which completes the proof.
\end{proof}

More accurate finite-sample bounds for this result can be obtained by applying inequalities for the quantiles of the $\chi^2$-distribution instead of Chebyshev's inequality. Such bounds are given in Proposition \ref{prop_trace} in the Appendix.

\subsection{The least squares estimator for linear Gaussian covariance models}\label{sec:LSE}

Anderson~\cite{andersonLinearCovariance} described an unbiased estimator for the covariance matrix and recommended the use of that estimator as a starting point for the Newton-Raphson algorithm. Anderson treated the case $G_{0}=0$ and proposed as an unbiased estimator the solution to the following set of linear equations:
\begin{equation}
\label{def_eq}
\sum_{i=1}^r \;v_i\,\tr(G_i G_j) \quad = \quad \frac{n}{n-1}\tr(S_nG_j), \qquad j=1,\dots ,r.
\end{equation}
We denote by $\bar{v}$ the estimator obtained from solving (\ref{def_eq}) without the scaling by $\frac{n}{n-1}$. As we now show, $\bar{v}$ is the least squares estimator and it can be obtained by orthogonally projecting $S_n$ onto the linear subspace defining $\cM(\mathcal{G})$: Denote by $G$ the $p^{2}\times r$-matrix whose columns are $\textrm{vec}(G_1),\ldots,\textrm{vec}(G_r)$. Define
$$
H_G:=G(G^{T} G)^{-1}G^{T}.
$$
That $G^TG$ is positive definite and hence invertible follows from the assumption that $G_1,\ldots,G_r$ are linearly independent. The matrix $H_G$ is a symmetric matrix representing the projection in $\R^{p^{2}}$ onto the linear subspace spanned by $\textrm{vec}(G_1),\ldots,\textrm{vec}(G_r)$. In particular, it follows that $H_G^{2}=H_G$. The defining equations for $\bar{v}$ can be expressed as
$$
\bar{v}\;=\;(G^TG)^{-1}G^T \,\textrm{vec}(S_n),$$
and since $G_0$ is orthogonal to the linear subspace spanned by $G_1,\dots,G_r$, then 
$$
\textrm{vec}(\Sigma_{\bar v})\,=\,\textrm{vec}(G_{0})+G\, \bar v\,=\,\textrm{vec}(G_{0})+H_G\,\textrm{vec}(S_{n}).
$$

For instance, in the case of Brownian motion tree models, $G_{0}=0$ and $\Sigma_{\bar{v}}$ can be viewed as a mixture or average over all paths between `siblings' in the tree. For a star tree model with $p$ leaves, $\Sigma_{\bar{v}}$ is of the form
\begin{equation}
\label{star_tree}
(\Sigma_{\bar{v}})_{ij} \;=\; 
\begin{cases}
(S_n)_{ii}, & \hbox{ if } i = j \\
{\binom{p}{2}}^{-1} {\sum\limits_{i'<j'} (S_n)_{i'j'}}, & \hbox{ if } i < j.
\end{cases}
\end{equation}

We now prove that $\Sigma_{\bar v}$ also lies in $\Delta_{2S_n}$ with high probability. If $W_{n}$ has a standard Wishart distribution $\cW_{p}(n,\mathbb{I}_{p})$, then we can write $W_{n}=A A^{T}$, where $A$ is a $p\times n$ matrix whose entries are independent standard normal random variables. We denote the minimal and maximal singular values of $A$ by $s_{\min}(A)$ and $s_{\max}(A)$, respectively. So we have $s_{\min}(A)=\sqrt{\lambda_{\min}(W_{n})}$ and $s_{\max}(A)=\sqrt{\lambda_{\max}(W_{n})}$. We will make use of the following results from~\cite{davidson2001local} and \cite[Section 5.3.1]{vershynin2010introduction}:

\begin{thm}[\cite{davidson2001local}, Theorem II.13]
\label{th:SingBounds}
Let $A$ be a $p\times n$ matrix whose entries are independent standard normal random variables.  Then for every $t\geq 0$, with probability at least $1-2\exp(-t^{2}/2)$ we have
$$
\sqrt{n}-\sqrt{p}-t\;\leq\; s_{\min}(A)\;\leq\; s_{\max}(A)\;\leq\; \sqrt{n}+\sqrt{p}+t.
$$
\end{thm}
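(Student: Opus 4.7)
The plan is to prove Theorem~\ref{th:SingBounds} by combining Gordon's comparison inequality for Gaussian processes (to control the expectations of the extreme singular values) with Gaussian concentration of measure (to control their fluctuations). The starting point is the variational representation
$$
s_{\max}(A) \;=\; \sup_{u\in S^{p-1},\,v\in S^{n-1}} u^{T}Av, \qquad s_{\min}(A)\;=\;\inf_{u\in S^{p-1}}\sup_{v\in S^{n-1}} u^{T}Av,
$$
valid when $p\le n$, which expresses both singular values as extremal values of the centered Gaussian process $X_{u,v}:=u^{T}Av$ indexed by $(u,v)\in S^{p-1}\times S^{n-1}$. (The case $p>n$ is trivial since then $s_{\min}(A)=0$ and the stated lower bound is nonpositive.)

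For the expectations I would introduce the auxiliary Gaussian process $Y_{u,v}:=\langle g,u\rangle+\langle h,v\rangle$, where $g\sim\mathcal{N}(0,\mathbb{I}_{p})$ and $h\sim\mathcal{N}(0,\mathbb{I}_{n})$ are independent. Using $\E X_{u,v}X_{u',v'}=(u^{T}u')(v^{T}v')$, a direct variance computation yields
$$
\E\bigl(X_{u,v}-X_{u',v'}\bigr)^{2}\;\le\;\E\bigl(Y_{u,v}-Y_{u',v'}\bigr)^{2},
$$
with equality whenever $u=u'$. This diagonal equality is precisely the hypothesis of Gordon's minimax comparison inequality, whose application gives
$$
\E\, s_{\max}(A)\;\le\;\E\|g\|+\E\|h\|, \qquad \E\, s_{\min}(A)\;\ge\;\E\|h\|-\E\|g\|.
$$
Jensen's inequality furnishes $\E\|g\|\le\sqrt{p}$, so the first bound is at most $\sqrt{n}+\sqrt{p}$. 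For the second I would use the fact that the map $k\mapsto\sqrt{k}-\E\|g_{k}\|$ is monotonically decreasing in $k$ (a standard Gamma-ratio estimate), which implies $\E\|h\|-\E\|g\|\ge\sqrt{n}-\sqrt{p}$.

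For the tail bounds I would use that both $A\mapsto s_{\max}(A)$ and $A\mapsto s_{\min}(A)$ are $1$-Lipschitz in the Frobenius norm, since Weyl's inequality for singular values yields $|s_{\bullet}(A)-s_{\bullet}(B)|\le\|A-B\|\le\|A-B\|_{F}$. Borell's one-sided Gaussian concentration inequality then gives, for each $t\ge 0$,
$$
\P\bigl(s_{\max}(A)>\E\, s_{\max}(A)+t\bigr)\le e^{-t^{2}/2}, \qquad \P\bigl(s_{\min}(A)<\E\, s_{\min}(A)-t\bigr)\le e^{-t^{2}/2}.
$$
Combining these with the mean bounds obtained from Gordon's inequality and applying a union bound over the two deviation events yields the probability $1-2e^{-t^{2}/2}$ stated in the theorem.

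The main obstacle will be the verification of Gordon's comparison hypothesis, since it is the inequality together with the diagonal equality (not merely the inequality) that distinguishes Gordon's minimax lemma from the weaker Slepian lemma, and a sign error here would reverse the lower bound for $s_{\min}$. A secondary technical point is the monotonicity of $\sqrt{k}-\E\|g_{k}\|$; this can be handled either by direct estimation of the ratio $\Gamma((k+1)/2)/\Gamma(k/2)$ or by a coupling argument using the recursion $\|g_{k+1}\|^{2}=\|g_{k}\|^{2}+Z^{2}$ with $Z$ an independent standard normal.
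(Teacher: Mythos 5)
The paper does not prove this statement; it is quoted verbatim from Davidson and Szarek \cite{davidson2001local}, Theorem II.13, and your argument is precisely the proof given there: Gordon's minimax comparison with the process $\langle g,u\rangle+\langle h,v\rangle$ to bound $\E\,s_{\max}$ and $\E\,s_{\min}$, followed by Borell's concentration inequality for the $1$-Lipschitz singular-value maps and a union bound. The computation $\E(Y_{u,v}-Y_{u',v'})^{2}-\E(X_{u,v}-X_{u',v'})^{2}=2(1-u^{T}u')(1-v^{T}v')\ge 0$, with equality at $u=u'$, verifies Gordon's hypothesis with the correct sign, and the monotonicity of $k\mapsto\sqrt{k}-\E\|g_{k}\|$ is indeed the standard Gamma-ratio fact (e.g.\ via $\E\|g_{k+1}\|\cdot\E\|g_{k}\|=k$), so the proposal is correct and takes essentially the same route as the cited source.
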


\begin{lem}[\cite{vershynin2010introduction}, Lemma 5.36]
\label{lem:boundSingVal}
Consider a matrix $B$ that satisfies
$$
\norm{BB^{T}-\mathbb{I}_{p}}\leq \max(\delta,\delta^{2})\qquad\mbox{for some }\delta>0.
$$
Then
$$
1-\delta\leq s_{\min}(B)\leq s_{\max}(B)\leq 1+\delta.
$$
\end{lem}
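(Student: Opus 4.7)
The plan is to translate the spectral-norm hypothesis on $BB^{T}-\mathbb{I}_p$ into a two-sided bound on the eigenvalues of $BB^{T}$, and then take square roots to recover the claimed singular-value bounds, splitting into the regimes $\delta\le 1$ and $\delta>1$ because the quantity $\max(\delta,\delta^2)$ behaves differently in each.

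First, I would recall the identities $s_{\min}(B)^2=\lambda_{\min}(BB^{T})$ and $s_{\max}(B)^2=\lambda_{\max}(BB^{T})$. Since $BB^{T}-\mathbb{I}_p$ is symmetric, the characterization (\ref{lem:SimpleReduction}) of the spectral norm of a symmetric matrix gives
$$
-\max(\delta,\delta^{2})\,\mathbb{I}_p \;\prec\; BB^{T}-\mathbb{I}_p \;\prec\; \max(\delta,\delta^{2})\,\mathbb{I}_p,
$$
so every eigenvalue $\lambda$ of $BB^{T}$ lies in the interval $[1-\max(\delta,\delta^{2}),\,1+\max(\delta,\delta^{2})]$.

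In the regime $\delta\le 1$ we have $\max(\delta,\delta^{2})=\delta$, so $1-\delta\le\lambda\le 1+\delta$. The elementary inequalities $\sqrt{1-\delta}\ge 1-\delta$ and $\sqrt{1+\delta}\le 1+\delta$, both of which reduce to $\delta\ge 0$ after squaring (using $\delta\le 1$ to ensure nonnegativity on the left), yield
$$
1-\delta \;\le\; \sqrt{1-\delta} \;\le\; s_{\min}(B) \;\le\; s_{\max}(B) \;\le\; \sqrt{1+\delta} \;\le\; 1+\delta.
$$
In the regime $\delta>1$ we have $\max(\delta,\delta^{2})=\delta^{2}$. The lower bound is vacuous since $1-\delta<0\le s_{\min}(B)$, and for the upper bound we use $\lambda_{\max}(BB^{T})\le 1+\delta^{2}\le (1+\delta)^{2}$ to conclude $s_{\max}(B)\le 1+\delta$.

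There is no real obstacle here; the only point that requires any care is being systematic about the case split on $\delta$ versus $1$, since in the large-$\delta$ regime one of the two bounds becomes vacuous and the other uses the crude estimate $1+\delta^{2}\le(1+\delta)^{2}$ rather than the square-root inequalities from the small-$\delta$ regime.
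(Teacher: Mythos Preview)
Your proof is correct. The paper does not supply its own proof of this lemma---it is quoted directly from Vershynin's lecture notes---and your argument is exactly the standard one given there: convert the spectral-norm hypothesis into two-sided eigenvalue bounds on $BB^{T}$ and take square roots, handling the cases $\delta\le 1$ and $\delta>1$ separately. One cosmetic slip: since the hypothesis is the non-strict inequality $\norm{BB^{T}-\mathbb{I}_{p}}\le\max(\delta,\delta^{2})$, the displayed Loewner inequalities should read $\preceq$ rather than $\prec$ (the paper's equation~(\ref{lem:SimpleReduction}) is stated with strict inequalities, but the non-strict analog is what you need here); this does not affect the argument.
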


We now use these results to obtain exponential finite-sample bounds on the probability that $\Sigma_{\bar v}\in \Delta_{2S_{n}}$. Let $\kappa:=\norm{\Sigma_{v^{*}}^{-1}}\norm{\Sigma_{v^{*}}}\geq 1$ denote the condition number of $\Sigma_{v^{*}}$. 

\begin{thm}
\label{th:UNBinRegionFinite}
Let $\Sigma_{\bar{v}}$ denote the least squares estimator based on a sample covariance matrix $S_n$ of a linear Gaussian covariance model given by $\Sigma_{v^*}$. Fix $p$ and suppose that $n\geq 15$ and large enough so that 
$$
\epsilon=\epsilon(n)\;:=\;\sqrt{\frac{2+\kappa\sqrt{p}}{1+\kappa\sqrt{p}}}-\sqrt{\frac{n-1}{n}}-\sqrt{\frac{p}{n}}\;>\; 0.
$$
Then $2S_{n}-\Sigma_{\bar v}\succ 0$ with probability at least $1-2\exp(-n\epsilon^{2}/2)$. Also, $\Sigma_{\bar v}\succ 0$ with probability at least $1-2\exp(-n\epsilon^{2}/2)$. Consequently, 
$$
\P(\Sigma_{\bar v} \in \Delta_{2S_{n}})\geq 1-4\exp(-n\epsilon^{2}/2).
$$
\end{thm}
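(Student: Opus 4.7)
The plan is to condition on a single high-probability event controlling the eigenvalues of the standardized Wishart matrix $\tilde S := \Sigma_{v^*}^{-1/2} S_n \Sigma_{v^*}^{-1/2}$ and then derive both positivity statements deterministically, using the orthogonal-projection characterization of the least squares estimator. I would write $W_{n-1} = n\tilde S = AA^T$ for a $p\times(n-1)$ matrix $A$ of iid $\mathcal N(0,1)$ entries, apply Theorem~\ref{th:SingBounds} with $t = \sqrt n\,\epsilon$, and obtain, with probability at least $1-2\exp(-n\epsilon^2/2)$, the bounds $l^2 \leq \lambda_{\min}(\tilde S) \leq \lambda_{\max}(\tilde S) \leq u^2$, where $u := \sqrt{(n-1)/n} + \sqrt{p/n} + \epsilon$ and $l := 2\sqrt{(n-1)/n} - u$. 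By construction of $\epsilon$, $u^2 = (2+\kappa\sqrt p)/(1+\kappa\sqrt p)$ and $u^2 - 1 = 1/(1+\kappa\sqrt p)$.

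Next, from $\textrm{vec}(\Sigma_{\bar v} - \Sigma_{v^*}) = H_G\,\textrm{vec}(S_n - \Sigma_{v^*})$ and Pythagoras, both $\|\Sigma_{\bar v} - \Sigma_{v^*}\|_F$ and $\|S_n - \Sigma_{\bar v}\|_F$ are at most $\|S_n - \Sigma_{v^*}\|_F$. Combining this with the standard inequalities $\|\cdot\| \leq \|\cdot\|_F \leq \sqrt p\,\|\cdot\|$ for symmetric $p\times p$ matrices and with $\|S_n - \Sigma_{v^*}\|_F \leq \|\Sigma_{v^*}\|\,\|\tilde S - I\|_F$ gives the deterministic bound
$$\max\bigl(\|\Sigma_{\bar v} - \Sigma_{v^*}\|,\;\|S_n - \Sigma_{\bar v}\|\bigr) \;\leq\; \sqrt p\,\|\Sigma_{v^*}\|\,\|\tilde S - I\|.$$
The key observation is that on the good event, $\|\tilde S - I\| \leq u^2 - 1$. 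Indeed, a short computation gives $u^2 + l^2 = 2(u - \sqrt{(n-1)/n})^2 + 2(n-1)/n$, so $u^2 + l^2 > 2$ reduces to $u - \sqrt{(n-1)/n} > 1/\sqrt n$, which follows from $u - \sqrt{(n-1)/n} = \sqrt{p/n} + \epsilon \geq 1/\sqrt n + \epsilon$ whenever $\epsilon > 0$ and $p \geq 1$. Hence $\|\tilde S - I\| \leq \max(u^2 - 1,\, 1 - l^2) = u^2 - 1$.

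Both positivity claims now follow from two short chains. For $2S_n \succ \Sigma_{\bar v}$, I would use $\lambda_{\min}(2S_n - \Sigma_{\bar v}) \geq \lambda_{\min}(S_n) - \|S_n - \Sigma_{\bar v}\| \geq \lambda_{\min}(\Sigma_{v^*})\bigl(l^2 - \kappa\sqrt p\,\|\tilde S - I\|\bigr)$ and the identity $\kappa\sqrt p\,(u^2-1) = 2 - u^2$, combined with $l^2 > 2 - u^2$, to force strict positivity. For $\Sigma_{\bar v} \succ 0$, a parallel chain gives $\lambda_{\min}(\Sigma_{\bar v}) \geq \lambda_{\min}(\Sigma_{v^*})\bigl(1 - \kappa\sqrt p\,\|\tilde S - I\|\bigr) > 0$, since $\kappa\sqrt p\,(u^2 - 1) = \kappa\sqrt p/(1+\kappa\sqrt p) < 1$. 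Each event therefore has probability at least $1 - 2\exp(-n\epsilon^2/2)$, and a union bound over the two claims produces $\P(\Sigma_{\bar v} \in \Delta_{2S_n}) \geq 1 - 4\exp(-n\epsilon^2/2)$.

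The main obstacle—and what pins down the peculiar form of $\epsilon$—is recognizing that the threshold $u^2 = (2+\kappa\sqrt p)/(1+\kappa\sqrt p)$ is calibrated precisely so that $\kappa\sqrt p\,(u^2 - 1) = 2 - u^2$, which is the exact level at which both positivity inequalities become sharp simultaneously: the right-hand side is strictly less than $l^2$ (first claim) and strictly less than $1$ (second claim). The auxiliary hypothesis $n \geq 15$ ensures that $\epsilon > 0$ is attainable in the tightest regime of small $\kappa\sqrt p$, so that the concentration event is non-vacuous.
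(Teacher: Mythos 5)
Your argument is correct and follows essentially the same route as the paper's proof: the Davidson--Szarek singular-value bound (Theorem~\ref{th:SingBounds}) with $t=\sqrt{n}\,\epsilon$, the Frobenius-contraction property of the least squares projection combined with $\norm{\cdot}\le\norm{\cdot}_F\le\sqrt{p}\,\norm{\cdot}$, the condition number $\kappa$, and a final union bound, with the threshold $\delta=u^2-1=1/(1+\kappa\sqrt{p})$ playing the identical calibrating role. Your assembly is in fact slightly cleaner in two minor respects --- the direct verification $u^2+l^2>2$ replaces the paper's appeal to Lemma~\ref{lem:boundSingVal} together with the $n\ge 15$ inequality and yields strict (rather than just non-strict) positivity, and you perturb $\Sigma_{\bar v}$ around $\Sigma_{v^*}$ rather than around $S_n$ for the second claim --- but these do not change the substance of the argument.
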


\begin{proof}
Note that $2S_{n}-\Sigma_{\bar v}\succ 0$ if and only if
$$
C\;:=\;\Sigma_{v^{*}}^{-1/2}S_{n}\Sigma_{v^{*}}^{-1/2}+\Sigma_{v^{*}}^{-1/2}(S_{n}-\Sigma_{\bar v})\Sigma_{v^{*}}^{-1/2}\;\succ\; 0,
$$
or equivalently, if $\lambda_{\min}(C)> 0$. By the eigenvalue stability inequality~\cite[Equation (1.63)]{Tao}, for $A, B \in \mathbb{S}^p$ and for any eigenvalue $\lambda_i$,
\begin{equation}
\label{eig_sta}
|\lambda_i(A+B)-\lambda_i(A)| \;\leq\; \norm{B}.
\end{equation}
As a special case of this inequality we obtain
$$\lambda_{\min}(A+B)\;\geq\; \lambda_{\min}(A) - \norm{B}.$$ 
Setting $A = \Sigma_{v^{*}}^{-1/2}S_{n}\Sigma_{v^{*}}^{-1/2}$ and $B = \Sigma_{v^{*}}^{-1/2}(S_{n}-\Sigma_{\bar v})\Sigma_{v^{*}}^{-1/2}$ in the latter inequality, we obtain 
\begin{equation}\label{eq:app:bounds1}
\begin{split}
\lambda_{\min}(C)&\geq \lambda_{\min}(\Sigma_{v^{*}}^{-1/2}S_{n}\Sigma_{v^{*}}^{-1/2})-\norm{\Sigma_{v^{*}}^{-1/2}(S_{n}-\Sigma_{\bar v})\Sigma_{v^{*}}^{-1/2}}\\
& \geq \lambda_{\min}(\Sigma_{v^{*}}^{-1/2}S_{n}\Sigma_{v^{*}}^{-1/2})-\norm{\Sigma_{v^{*}}^{-1}}\norm{S_{n}-\Sigma_{\bar v}}\\
& =\frac{1}{n}\lambda_{\min}(W_{n-1})-\norm{\Sigma_{v^{*}}^{-1}}\norm{S_{n}-\Sigma_{\bar v}},
\end{split}
\end{equation}
where $W_{n-1}:=n\Sigma_{v^{*}}^{-1/2}S_{n}\Sigma_{v^{*}}^{-1/2}\sim W(n-1,\mathbb{I}_p)$. For the second inequality in (\ref{eq:app:bounds1}) we used the inequality $\norm{ABA}\leq \norm{A}^{2}\norm{B}$ and the fact that $\norm{A}^{2}=\norm{A^{2}}$ for any positive definite matrix $A$. Since $\bar{v}$ is the least squares estimator,
$$
\norm{S_{n}-\Sigma_{\bar v}}_{F}\leq \norm{S_{n}-\Sigma_{v^{*}}}_{F}.
$$
As a consequence of standard inequalities on matrix norms we obtain
\begin{align*}
\norm{S_{n}-\Sigma_{\bar v}} &\leq \norm{S_{n}-\Sigma_{\bar v}}_{F} \\
&\leq \norm{S_{n}-\Sigma_{v^{*}}}_{F} \leq \sqrt{p}\norm{S_{n}-\Sigma_{v^{*}}}.
\end{align*}
This implies that
\begin{equation}\label{eq:bound2}
\begin{split}
\lambda_{\min}(C)&\geq \frac{1}{n}\lambda_{\min}(W_{n-1})-\sqrt{p}\norm{\Sigma_{v^{*}}^{-1}}\norm{S_{n}-\Sigma_{v^{*}}}\\
& = \frac{1}{n}\lambda_{\min}(W_{n-1})-\sqrt{p}\norm{\Sigma_{v^{*}}^{-1}}\norm{\Sigma_{v^{*}}^{1/2}\left(\frac{1}{n}W_{n-1}-\mathbb{I}_{p}\right)\Sigma_{v^{*}}^{1/2}}\\
& \geq \frac{1}{n}\lambda_{\min}(W_{n-1})-\sqrt{p}\norm{\Sigma_{v^{*}}^{-1}}\norm{\Sigma_{v^{*}}}\norm{\frac{1}{n}W_{n-1}-\mathbb{I}_{p}}.
\end{split}
\end{equation}
Define 
$$
\delta:=\frac{1}{1+\kappa\sqrt{p}}.
$$ 
Suppose that $\norm{\frac{1}{n}W_{n-1}-\mathbb{I}_{p}}\leq \delta$. Then ${\frac{1}{n}\lambda_{\min}(W_{n-1})}\geq 1-\delta > 0$, since $\delta < 1$, and hence the last line in (\ref{eq:bound2}) can be bounded further as follows:
$$
\frac{1}{n}\lambda_{\min}(W_{n-1})-\kappa\sqrt{p}\norm{\frac{1}{n}W_{n-1}-\mathbb{I}_{p}}\geq 1-\delta-\kappa\delta\sqrt{p}=0.
$$
Combining the arguments made so far in this proof, we obtain
 a lower bound on the probability that $2S_{n}-\Sigma_{\bar v}\succ 0$, or equivalently that $2S_{n}-\Sigma_{\bar v}\succeq 0$: 
$$
\P(2S_{n}-\Sigma_{\bar v}\succeq 0)\quad=\quad\P(\lambda_{\min}(C)\geq 0)\quad\geq\quad  \P\left(\norm{\frac{1}{n}W_{n-1}-\mathbb{I}_{p}}\leq \delta\right),
$$
and it therefore suffices to obtain a bound for $\P\left(\norm{\frac{1}{n}W_{n-1}-\mathbb{I}_{p}}\leq \delta\right)$. 

Note that $\norm{\frac{1}{n}W_{n-1}-\mathbb{I}_{p}}\leq \delta$ is equivalent to 
\begin{equation}\label{eq:LambdasBound}
(1-\delta)n\leq \lambda_{\min}(W_{n-1})\leq \lambda_{\max}(W_{n-1})\leq (1+\delta)n.
\end{equation}
By Theorem \ref{th:SingBounds} we have that for every $t\geq 0$ 
\begin{equation}\label{eq_1t}
\sqrt{n-1}-\sqrt{p}-t\leq \sqrt{\lambda_{\min}(W_{n-1})}\leq \sqrt{\lambda_{\max}(W_{n-1})}\leq \sqrt{n-1}+\sqrt{p}+t
\end{equation}
with probability at least $1-2\exp(-t^{2}/2)$. To bound the probability of the event (\ref{eq:LambdasBound}), we set 
\begin{equation}\label{eq_2t}
t:=\sqrt{n}\left(\sqrt{1+\delta}-\sqrt{\frac{n-1}{n}}-\sqrt{\frac{p}{n}}\right)=\sqrt{n}\epsilon(n).
\end{equation}
Since $\epsilon(n)>0$ by assumption, we also have $t\geq 0$. By substituting the expression for $t$ given in (\ref{eq_2t}) into the left inequality in (\ref{eq_1t}), we obtain
$$
\sqrt{n}\bigg(2\sqrt{\frac{n-1}{n}}-\sqrt{1+\delta}\bigg)\leq \sqrt{\lambda_{\min}(W_{n-1})}.
$$
By applying Lemma~\ref{lem:boundSingVal} we obtain 
$$
\sqrt{\lambda_{\max}(W_{n-1})}\leq \sqrt{n}\sqrt{1+\delta}.
$$
Hence, with probability at least $1-2\exp(-n\epsilon^{2}/2)$, there holds the inequalities 
$$
\sqrt{n}\bigg(2\sqrt{\frac{n-1}{n}}-\sqrt{1+\delta}\bigg)\leq \sqrt{\lambda_{\min}(W_{n-1})}\leq \sqrt{\lambda_{\max}(W_{n-1})}\leq \sqrt{n}\sqrt{1+\delta}.
$$ 
Now note that
$$
\sqrt{n}\sqrt{1-\delta}\leq \sqrt{n}\left(2\sqrt{\frac{n-1}{n}}-\sqrt{1+\delta}\right)
$$
for $n\geq 15$. Hence also, with probability at least $1-2\exp(-n\epsilon^{2}/2)$ and $n\geq 15$, we have
$$
\sqrt{n}\sqrt{1-\delta}\leq \sqrt{\lambda_{\min}(W_{n-1})}\leq \sqrt{\lambda_{\max}(W_{n-1})}\leq \sqrt{n}\sqrt{1+\delta},
$$ 
which is equivalent to (\ref{eq:LambdasBound}). This establishes the exponential bound on the probability that $2S_{n}-\Sigma_{\bar v}\succ 0$. 

The same bound holds for the event $\Sigma_{\bar v}\succ 0$, since $\Sigma_{\bar v}\succ 0$ is equivalent to 
$$
C'\;:=\;\Sigma_{v^{*}}^{-1/2}S_{n}\Sigma_{v^{*}}^{-1/2}+\Sigma_{v^{*}}^{-1/2}(\Sigma_{\bar v}-S_{n})\Sigma_{v^{*}}^{-1/2}\;\succ\; 0
$$
i.e., $\lambda_{\min}(C')>0$. Again by the eigenvalue stability inequality (\ref{eig_sta}) we have
$$
\lambda_{\min}(C')\geq \lambda_{\min}(\Sigma_{v^{*}}^{-1/2}S_{n}\Sigma_{v^{*}}^{-1/2})-\norm{\Sigma_{v^{*}}^{-1/2}(S_{n}-\Sigma_{\bar v})\Sigma_{v^{*}}^{-1/2}},
$$
the same expression as in (\ref{eq:app:bounds1}). Finally, because the events $\{\Sigma_{\bar v}\succ 0\}$ and $\{2S_{n}-\Sigma_{\bar v}\succ 0\}$ are not independent, we utilize the inequality $\P(A\cap B)\geq \P(A)+\P(B)-1$ to complete the proof.
\end{proof}

\begin{rem}
We emphasise that Theorem \ref{th:UNBinRegionFinite}	 provides also lower bounds on the probability that the least squares estimator is positive definite, which is of its own practical importance. 
\end{rem}

We now discuss some simulation results for $\mathbb{P}(\Sigma_{\bar{v}} \in \Delta_{S_{n}})$ for the example of Brownian motion tree models on the star tree.

\begin{exmp}\label{ex:simple_unb}
Consider the model $\cM(G)$, where $G_{0}$ is the zero matrix, $G_{i}=E_{ii}$ for $i=1,\ldots,p$, where $E_{ii}$ is the matrix with a $1$ in position $(i,i)$ and zeros otherwise, and $G_{p+1} = \mathbbm{1}\mathbbm{1}^{T}$, where $\mathbbm{1}$ is the column vector with every component equal to $1$. This linear covariance model corresponds to a Brownian motion tree model on the star tree. Suppose that the true covariance matrix $\Sigma_{v^*}$ is given by $v^*_{i}=i$ for $i=1,\dots ,p$ and $v^*_{p+1}=1$. We performed simulations similar to the ones that led to Figure~\ref{fig:TW1}. 

For fixed $p\in \{3,5,10\}$ we let $n$ vary between $p$ and $20p$. For each pair $(p,n)$ we generated $10,000$ times a sample of size $n$ from $\mathcal N_{p}(0,\Sigma_{v^*})$, computed the corresponding sample covariance matrix and the least squares estimator, and determined whether the least squares estimator belonged to the region $\Delta_{2S_n}$. The simulated probabilities of the event $0\prec \Sigma_{\bar v}\prec 2S_{n}$ are given by the solid blue line in Figure~\ref{fig:unb2}. As in Figure~\ref{fig:TW1}, the dashed black line represents the approximated probabilities, with the approximation being obtained through the Tracy-Widom law, of the event $0\prec \Sigma_{v^*}\prec 2S_{n}$. Figure~\ref{fig:unb2} indicates that on average $\Sigma_{\bar v}$ lies in $\Delta_{2S_n}$ more often than $\Sigma_{v^*}$. 
\end{exmp}

\begin{figure}[t!]
\centering
\subfigure[$p=3$]{\includegraphics[scale=0.2]{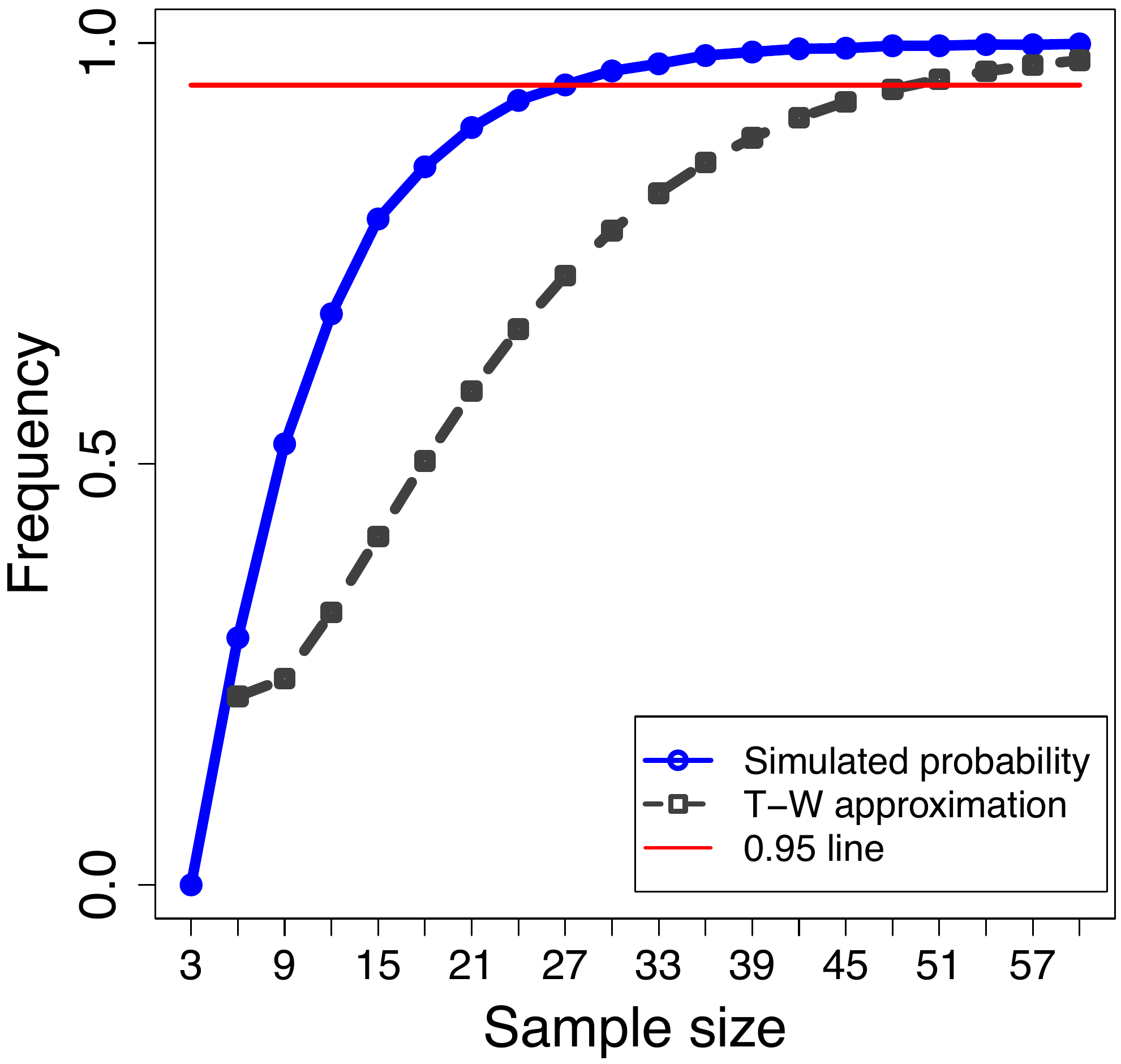}}\quad
\subfigure[$p=5$]{\includegraphics[scale=0.23]{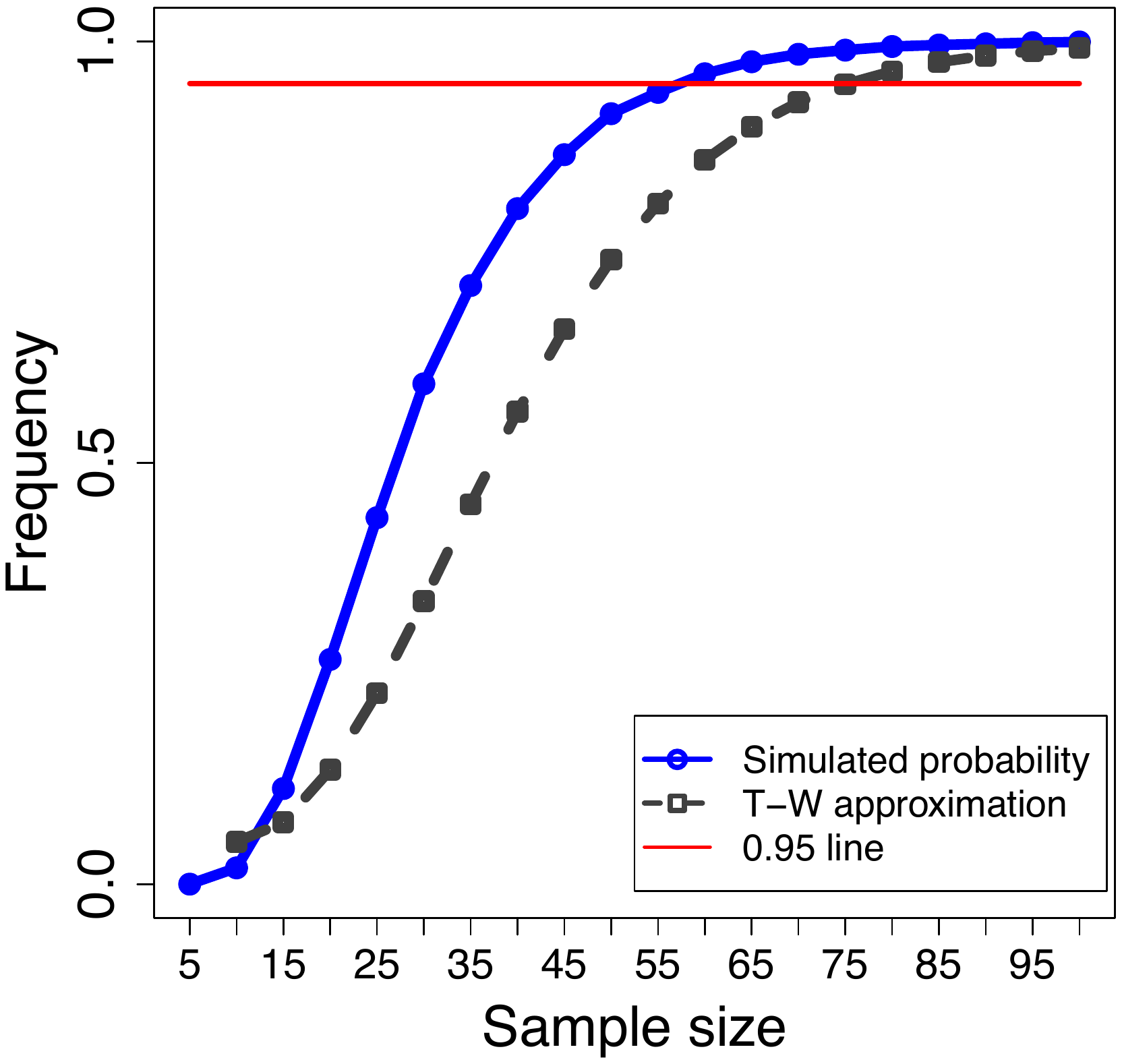}}\quad
\subfigure[$p=10$]{\includegraphics[scale=0.2]{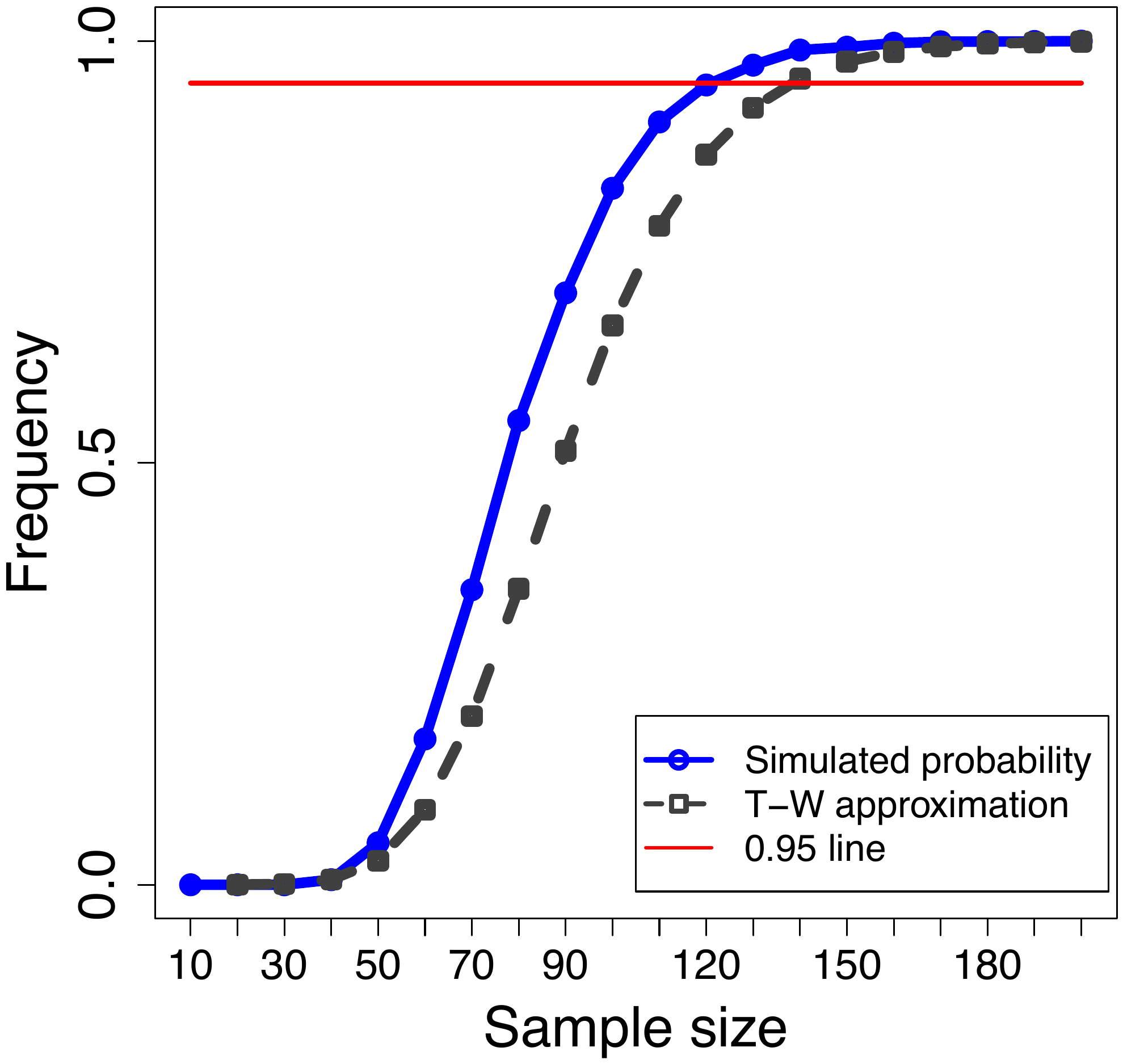}} 
\caption{Illustration of Example \ref{ex:simple_unb}. In each plot the dimension $p\in \{3,5,10\}$ is fixed and the number of samples $n$ varies between $p$ and $20p$. The solid blue curve shows the simulated probability that the least squares estimator $\Sigma_{\bar v}$ satisfies $0\prec \Sigma_{\bar v}\prec 2S_{n}$ and the dashed black curve is the approximation of the probability that the MLE $\Sigma_{v^*}$ satisfies $0\prec \Sigma_{v^*}\prec 2S_{n}$ by the reflected Tracy-Widom distribution.}
\label{fig:unb2}
\end{figure}

\subsection{Alternative initiation points for hill-climbing}\label{sec:start}

The least squares estimator $\Sigma_{\bar v}$ is a natural initiation point in any hill-climbing algorithm for computing the MLE. We showed that such an initiation comes with convergence guarantees when $\Sigma_{\bar v}$ lies in $\Delta_{2S_n}$ and we bounded from below the probability of the event $\{\Sigma_{\bar v}\in \Delta_{2S_n}\}$. In this section, we describe an alternative initiation for the case when $\Sigma_{\bar v}\notin\Delta_{2S_n}$. The suggested initiation is motivated by shrinkage estimators used for high-dimensional covariance matrix estimation~\cite{Ledoit2004365,schafer2005shrinkage}.

In the following, we analyze linear Gaussian covariance models with $G_0=0$. Similar techniques can be developed in other scenarios. Let $\Sigma_0$ be a fixed matrix and consider the convex combination $S_n^*=s((1-t)\Sigma_0+tS_n)$ of $G_0$, $\Sigma_0$ and $S_n$, where $s,t\in (0,1]$. We now show that by choosing $s$ and $t$ appropriately, the projection of $S_n^*$ onto $\mathcal{M}(G)$ lies in the region $\Delta_{2S_n}$ for any matrix $\Sigma_0\in\mathcal{M}(G)$. Hence $S_n^*$ can be used as an alternative initiation when $\Sigma_{\bar v}\notin\Delta_{2S_n}$.


\begin{prop}
\label{prop_alternative}
	Suppose that $\mathcal M(\mathcal G)$ is a linear Gaussian covariance model with $G_0=0$. Let $\Sigma_0$ be any covariance matrix in $\mathcal M(G)$ and consider $S_n^*=s((1-t)\Sigma_0+t S_n)$, where the coefficients $s,t\in (0,1]$ are such that $t=1$ if $\Sigma_{\bar v}\succ 0$ and
	$$t<\frac{\lambda_{\min}(\Sigma_0)}{\lambda_{\min}(\Sigma_0)-\lambda_{\min}(\Sigma_{\bar v})}$$
	otherwise, and $s=1$ if $2S_n-((1-t)\Sigma_0+t\Sigma_{\bar v})\succ 0$ and $$s<2\lambda_{\min}\big(S_n((1-t)\Sigma_0+t\Sigma_{\bar v})^{-1}\big)$$ otherwise. Then the orthogonal projection of $S_n^*$ onto $\mathcal M(\mathcal G)$ given by $\Sigma^*=s((1-t)\Sigma_0+t \Sigma_{\bar v})$ lies in $\Delta_{2S_n}$. 
\end{prop}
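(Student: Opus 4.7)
The plan is to split the argument into three short lemmas aligned with the three defining properties of $\Delta_{2S_n}$: (i) $\Sigma^{*}$ is the orthogonal projection of $S_n^{*}$ onto $\mathcal{M}(\mathcal{G})$; (ii) $\Sigma^{*}\succ 0$; (iii) $2S_n-\Sigma^{*}\succ 0$. Step (i) is a linearity observation, step (ii) controls the blend of $\Sigma_0$ and $\Sigma_{\bar v}$ via $t$, and step (iii) controls the scaling by $s$.

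\textbf{Step 1: projection formula.} Because $G_0=0$, the model is parametrized by the linear subspace $L=\mathrm{span}(G_1,\dots,G_r)\subset\mathbb{S}^p$, so orthogonal projection onto $\mathcal{M}(\mathcal{G})$ (meaning projection onto $L$) is linear. By Section \ref{sec:LSE}, the projection of $S_n$ is $\Sigma_{\bar v}$, and by assumption the projection of $\Sigma_0\in\mathcal{M}(\mathcal{G})\subset L$ is $\Sigma_0$ itself. Hence the projection of $S_n^{*}=s\bigl((1-t)\Sigma_0+tS_n\bigr)$ is exactly $\Sigma^{*}=s\bigl((1-t)\Sigma_0+t\Sigma_{\bar v}\bigr)$.

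\textbf{Step 2: positive definiteness.} Set $M=(1-t)\Sigma_0+t\Sigma_{\bar v}$; then $\Sigma^{*}=sM$ with $s>0$, so it suffices to show $M\succ 0$. If $\Sigma_{\bar v}\succ 0$, then $t=1$ gives $M=\Sigma_{\bar v}\succ 0$. Otherwise I use the Weyl inequality $\lambda_{\min}(A+B)\geq \lambda_{\min}(A)+\lambda_{\min}(B)$ to get
\[
\lambda_{\min}(M)\;\geq\;(1-t)\lambda_{\min}(\Sigma_0)+t\lambda_{\min}(\Sigma_{\bar v})\;=\;\lambda_{\min}(\Sigma_0)-t\bigl(\lambda_{\min}(\Sigma_0)-\lambda_{\min}(\Sigma_{\bar v})\bigr).
\]
Since $\Sigma_0\succ 0$ and $\lambda_{\min}(\Sigma_{\bar v})\leq 0$, the coefficient of $t$ is strictly positive, and the hypothesized bound $t<\lambda_{\min}(\Sigma_0)/\bigl(\lambda_{\min}(\Sigma_0)-\lambda_{\min}(\Sigma_{\bar v})\bigr)$ yields $\lambda_{\min}(M)>0$.

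\textbf{Step 3: the bound $2S_n-\Sigma^{*}\succ 0$.} Having $M\succ 0$ from Step~2, conjugating by $M^{-1/2}$ reduces $2S_n-sM\succ 0$ to $s<2\lambda_{\min}(M^{-1/2}S_n M^{-1/2})$. Using the similarity $M^{-1/2}S_n M^{-1/2}\sim S_n M^{-1}$, which preserves eigenvalues, this is equivalent to $s<2\lambda_{\min}(S_n M^{-1})$, which is precisely the stated condition on $s$ (and the case $s=1$ is the exact condition $2S_n-M\succ 0$). Combining the three steps yields $\Sigma^{*}\in\Delta_{2S_n}$. The only non-mechanical part is Step~2, where the Weyl bound must be paired with the sign information $\lambda_{\min}(\Sigma_0)>0$ and $\lambda_{\min}(\Sigma_{\bar v})\leq 0$ to flip the inequality correctly; everything else is bookkeeping.
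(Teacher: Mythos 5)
Your proposal is correct and follows essentially the same route as the paper's proof: linearity of the projection for the formula $\Sigma^{*}=s((1-t)\Sigma_0+t\Sigma_{\bar v})$, concavity of $\lambda_{\min}$ (your Weyl inequality) with the stated bound on $t$ to get $M\succ 0$, and the eigenvalue condition $s<2\lambda_{\min}(S_nM^{-1})$ to get $2S_n-sM\succ 0$. The only difference is organizational — you separate the three defining properties where the paper runs a three-way case analysis — but the mathematical content is identical.
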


\begin{proof}
Consider the following three cases. First, if $\Sigma_{\bar v}\succ 0$ and $2S_n-\Sigma_{\bar v}\succ 0$, then $\Sigma^*=\Sigma_{\bar v}\in \Delta_{2S_n}$ and $S^*_n=S_n$ and hence the proposition holds. Second, if $\Sigma_{\bar v}\succ 0$ and $2S_n-\Sigma_{\bar v}\not\succ 0$, then $t=1$ and $\Sigma^*=s\Sigma_{\bar v}\succ 0$. Using the constraint on $s$ we obtain
$$
\lambda_{\min}(S_n(\Sigma^*)^{-1})=\lambda_{\min}(S_n(s\Sigma_{\bar v})^{-1})=\frac{1}{s}\lambda_{\min}(S_n\Sigma_{\bar v}^{-1})>\frac{1}{2},
$$
or equivalently, $2S_n-\Sigma^*\succ 0$, which implies $\Sigma^*\in \Delta_{2S_n}$. The third and final case is when $\Sigma_{\bar v}\not\succ 0$. In this case we define $\Sigma'=(1-t)\Sigma_0+t\Sigma_{\bar v}$. By concavity of the minimal eigenvalue we obtain
\begin{eqnarray*}
\lambda_{\min} ((1-t) \Sigma_0+t\Sigma_{\bar v})&\geq &(1-t)\lambda_{\min}(\Sigma_0)+t\lambda_{\min}(\Sigma_{\bar v})\\
&=&\lambda_{\min}(\Sigma_0)-t(\lambda_{\min}(\Sigma_0)-\lambda_{\min}(\Sigma_{\bar v})).
\end{eqnarray*}
Hence, $\Sigma'\succ 0$ for every $t<\frac{\lambda_{\min}(\Sigma_0)}{\lambda_{\min}(\Sigma_0)-\lambda_{\min}(\Sigma_{\bar v})}$. By replacing $\Sigma_{\bar v}$ by $\Sigma'$ in the second case it follows that $\Sigma^*=s\Sigma'\in \Delta_{2S_n}$. It remains to show that in all three cases $\Sigma^*$ is the orthogonal projection of $S^*$ onto $\mathcal M(G)$. For this, observe that if $\Sigma_0$ lies in the linear span of $\mathcal{M}(\mathcal{G})$ then the orthogonal projection of $a\Sigma_0+bS_n$ is equal to $a\Sigma_0+b\Sigma_{\bar v}$, which completes the proof.
  \end{proof}

Note that when $\Sigma_{\bar{v}}\in\Delta_{2S_n}$, then Proposition~\ref{prop_alternative} results in the least squares estimator. When $\Sigma_{\bar{v}}\succ 0$, but $\Sigma_{\bar{v}}\nsucc 2S_n$, then Proposition~\ref{prop_alternative} shrinks the estimator to lie in $\Delta_{2S_n}$. If the identity matrix lies in $\mathcal M(G)$ then it is a natural choice for $\Sigma_0$. If $\mathcal M(\mathcal G)$ contains all diagonal matrices as for example for Brownian motion tree models, then it is more natural to take $\Sigma_0$ to be the diagonal matrix with sample variances on the diagonal, that is, $\Sigma_0={\rm diag}(S_n)$. Since $\Sigma_0$ and $\Sigma_{\bar v}$ coincide on the diagonal, this estimator is obtained from $\Sigma_{\bar v}$ by shrinking off-diagonal entries by a factor of $st$ and the diagonal entries by a factor of $s$.

\section{Estimating correlation matrices}
\label{sec_3}

In this section, we discuss the problem of maximum likelihood estimation of correlation matrices. In the case of $3\times 3$ matrices this problem was analyzed by several authors~\cite{Rousseeuw, Small, Stuart}. Computing the MLE for a correlation matrix exactly requires solving a system of polynomial equations in $\binom{p}{2}$ variables and can only be done using Gr\"obner bases techniques for very small instances. In the following, we demonstrate how this problem can be solved using the Newton-Raphson method and show how such an approach performs for estimating a $3\times 3$ and a $4\times 4$ correlation matrix.

We initiate the algorithm at the least squares estimator. For correlation models with no additional structure, the least squares estimator is given by
$$\bar{v}_{ij} \;=\; (S_n)_{ij}, \qquad 1\leq i< j\leq p,$$
and the corresponding correlation matrix is
$$\Sigma_{\bar{v}} \;=\; \mathbb{I}_p+ \sum_{1\leq i< j\leq p} \bar{v}_{ij} (E_{ij} + E_{ji}),$$
where, as before, $E_{ij}$ is the matrix with a $1$ in position $(i,j)$ and zeros elsewhere. Let $v^{(k)}$ be the $k$-th step estimate of the parameter vector $v$ obtained using the Newton-Raphson algorithm. At step $k+1$, we compute the update
$$
r_{k+1} \quad:= \quad v^{(k+1)}- v^{(k)}\quad=\quad -\nabla_v \nabla_v^T\, \ell( v^{(k)})
\cdot \nabla \ell( v^{(k)}).
$$
The gradient and the Hessian are derived from (\ref{dirder}) and  (\ref{dirhess}) by taking $A=G_{i}$ and $B=G_{j}$.

\begin{figure}[t!]
\centering
\subfigure[$n=10$]{\includegraphics[scale=0.29]{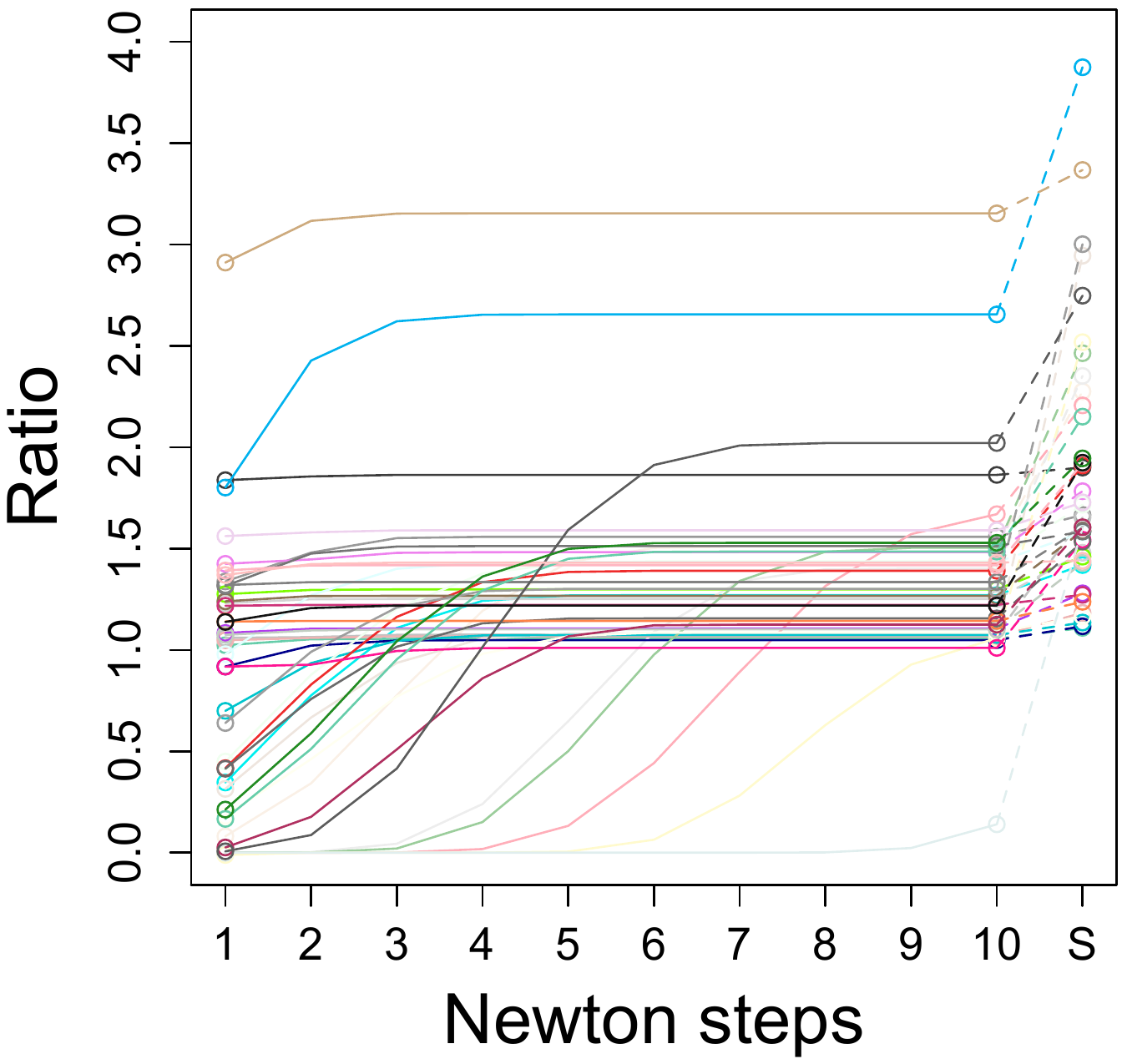}}\quad
\subfigure[$n=50$]{\includegraphics[scale=0.29]{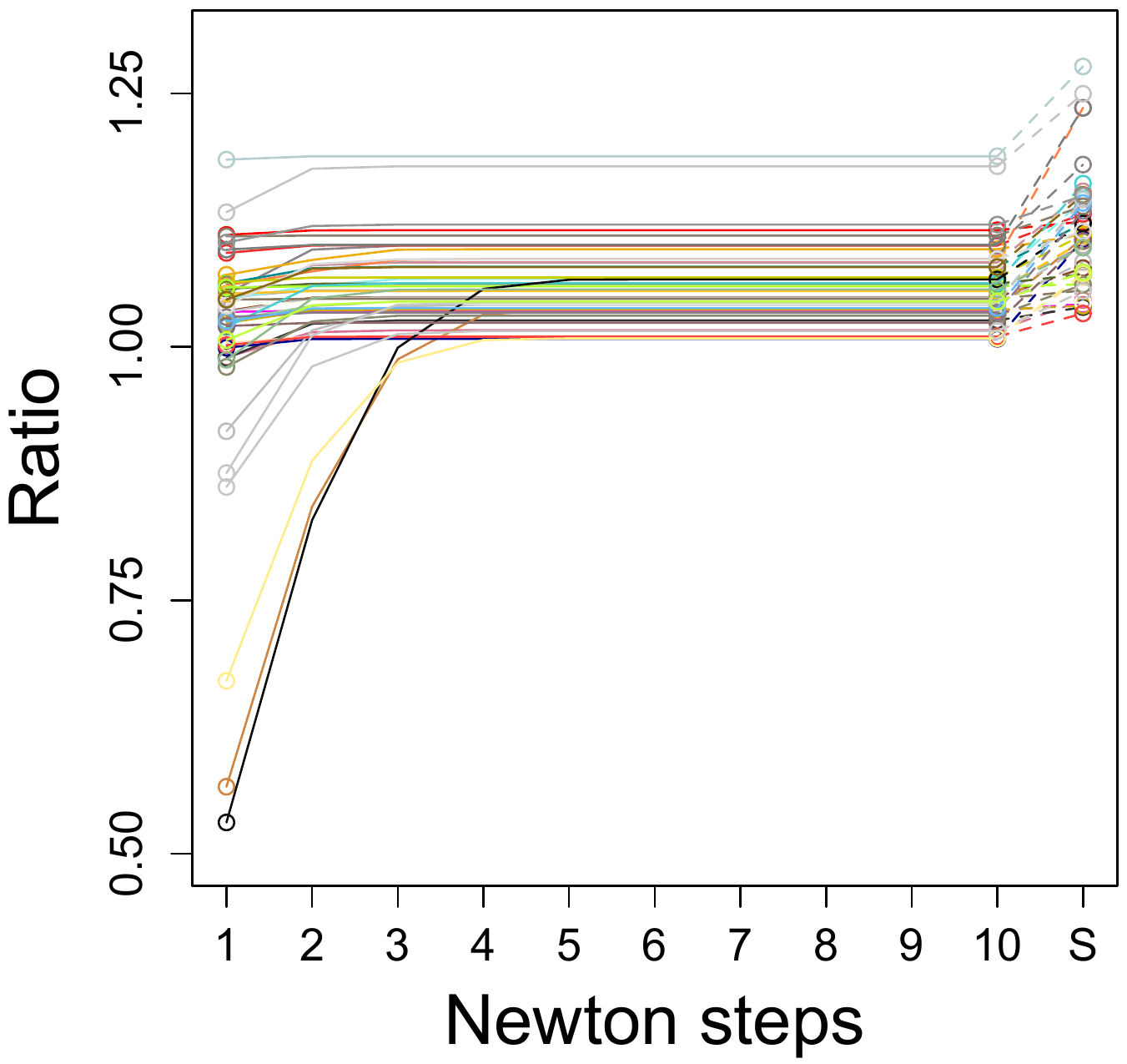}}\quad
\subfigure[$n=100$]{\includegraphics[scale=0.29]{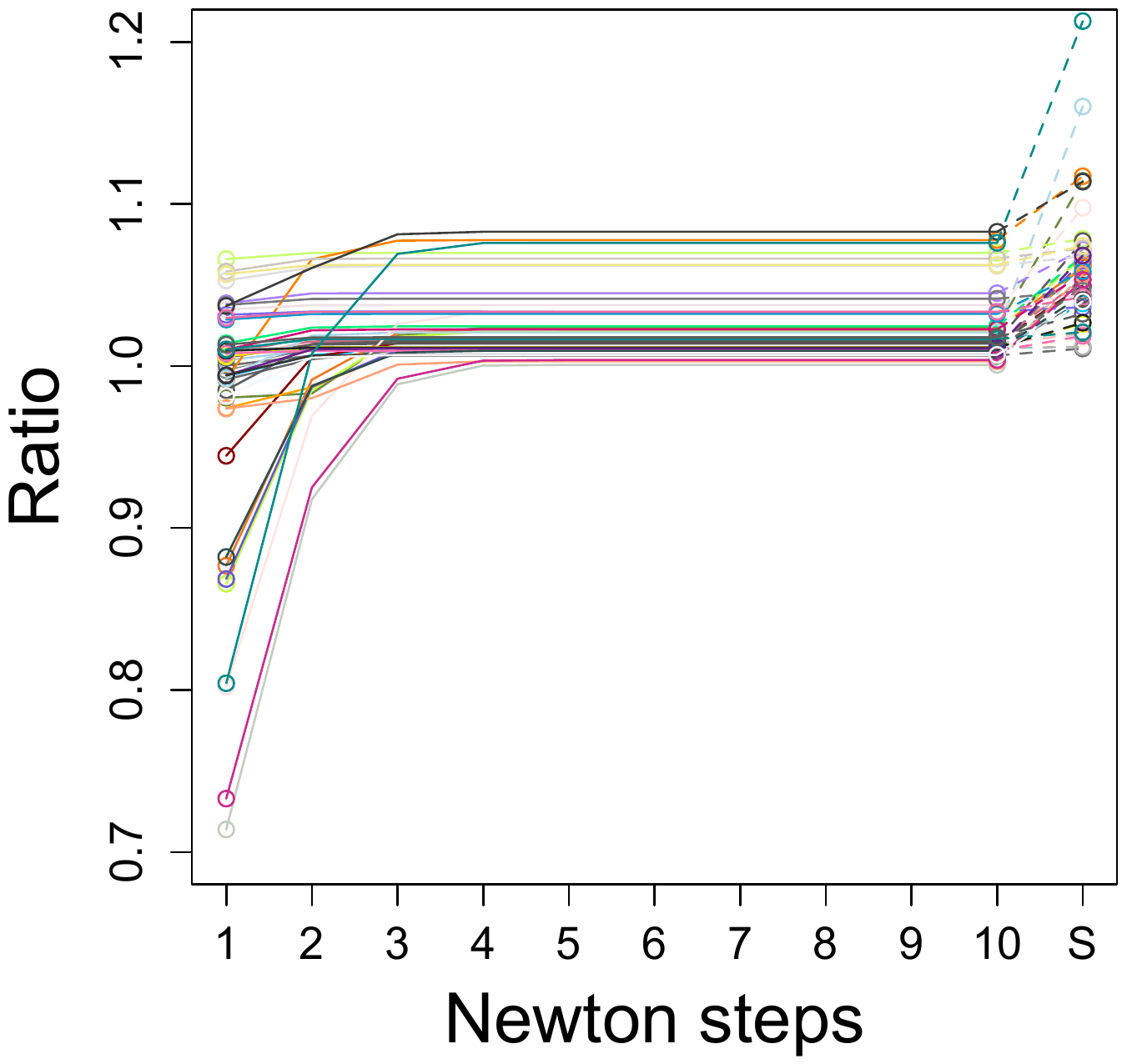}} 
\caption{Plot of 50 Newton-Raphson paths showing the ratio of the likelihood of the correlation matrix obtained by the Newton-Raphson algorithm in the first 10 steps compared to the likelihood of the true data-generating correlation matrix given in Example~\ref{ex_corr} (a). The last point represents the ratio of the likelihood of the sample covariance matrix and the true data-generating correlation matrix.}
\label{fig_corr_3}
\end{figure}

\begin{exmp}\label{ex_corr}
We show how the Newton-Raphson method performed on two examples. We sampled $n$ observations from a multivariate normal distribution with correlation matrix
$$
\textrm{(a)}\quad \Sigma_{v^*} = \begin{bmatrix} 1 & 1/2 & 1/3 \\ 1/2 & 1 & 1/4 \\ 1/3 & 1/4 & 1\end{bmatrix}, \qquad\quad 
\textrm{(b)}\quad \Sigma_{v^*} = \begin{bmatrix} 1 & 1/2 & 1/3 & 1/4\\ 1/2 & 1 & 1/5 & 1/6 \\ 1/3 & 1/5 & 1 & 1/7 \\ 1/4 & 1/6 & 1/7 & 1\end{bmatrix}.
$$
We display in Figure~\ref{fig_corr_3} the Newton-Raphson paths for the $3\times 3$ correlation matrix given in (a) and in Figure~\ref{fig_corr_4} we display the paths for the $4\times 4$ correlation matrix given in (b). Note that the scaling of the $y$-axis varies in each plot to provide better visibility of the different paths. In Figures~\ref{fig_corr_3}  and~\ref{fig_corr_4} we  plotted the ratio of the likelihood of the correlation matrix obtained by the Newton-Raphson algorithm and the likelihood of the true data-generating correlation matrix. We show the first 10 steps of the Newton-Raphson algorithm. The last point is the ratio of the likelihood of the sample covariance matrix and the true data-generating covariance matrix. If the MLE $\Sigma_{\hat{v}}$ exists, then the following inequalities hold:
$$
1\;\leq\; \frac{\textrm{likelihood}(\Sigma_{\hat{v}})}{\textrm{likelihood}(\Sigma_{v^*})}\;\leq\; \frac{\textrm{likelihood}(S_n)}{\textrm{likelihood}(\Sigma_{v^*})}.
$$
The first inequality follows from the fact that $\Sigma_{v^*}$ lies in the model and $\Sigma_{\hat{v}}$ maximizes the likelihood over the model, whereas the second inequality follows from the fact that $S_n$ is the unconstrained MLE of the Gaussian likelihood.
These inequalities are also evident in Figures~\ref{fig_corr_3} and \ref{fig_corr_4}. This is an indication that the Newton-Raphson algorithm has converged to the MLE. 

To produce Figures~\ref{fig_corr_3} and \ref{fig_corr_4} we performed 50 simulations for $n=10$, $n=50$, and $n=100$ and plotted those simulations for which $\Sigma_{\bar{v}}$ was not singular since otherwise the corresponding likelihood is undefined.  In Figure~\ref{table_boundary} we provide the mean and standard deviation for $\mathbb{P}(\Sigma_{\hat{v}}\nsucc 0)$ and $\mathbb{P}(2S_n -\Sigma_{\hat{v}}\nsucc 0)$, corresponding to the probability of landing outside the region $\Delta_{2S_n}$.

\begin{figure}[t!]
\centering
\subfigure[$n=10$]{\includegraphics[scale=0.29]{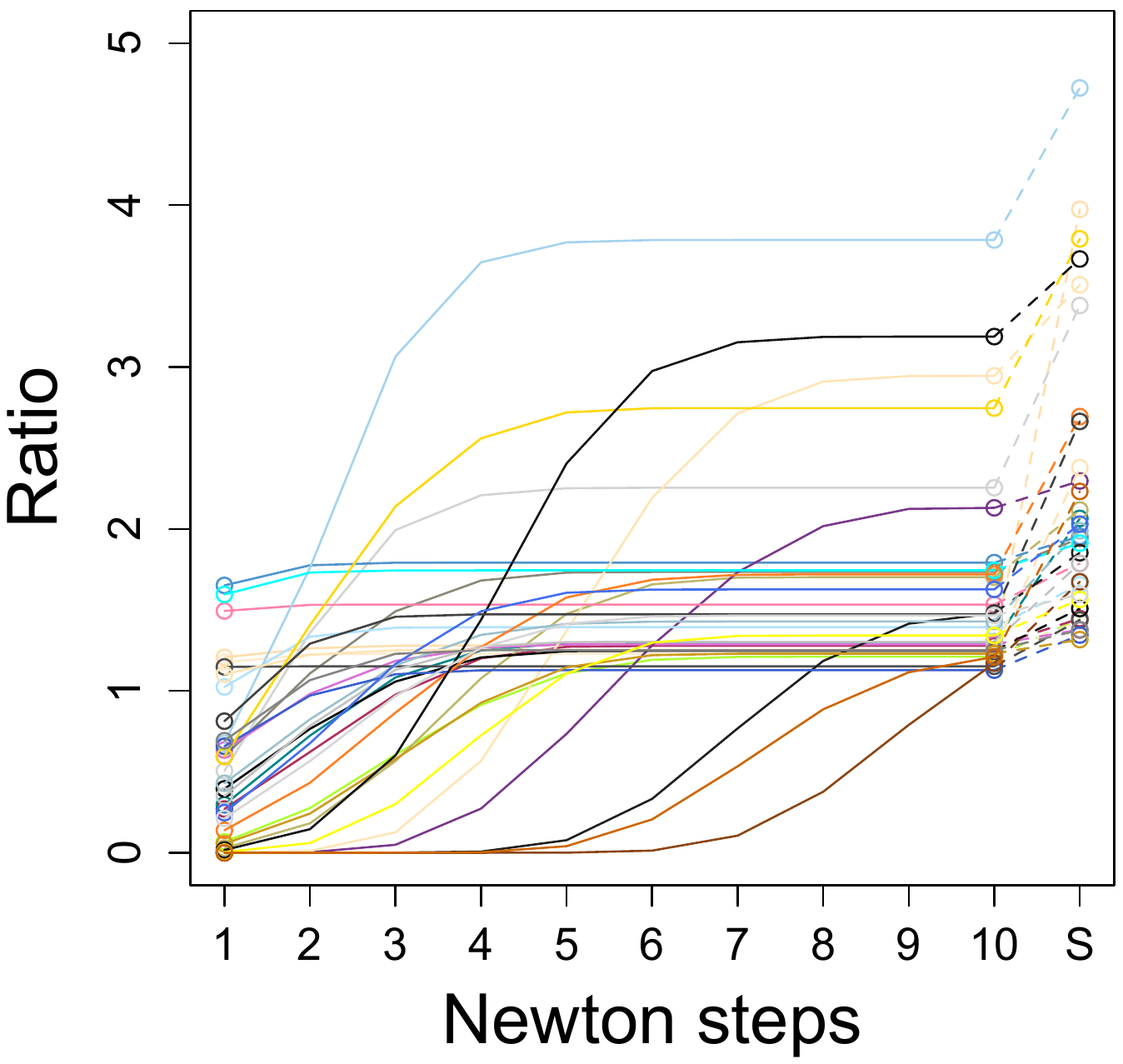}}\quad
\subfigure[$n=50$]{\includegraphics[scale=0.29]{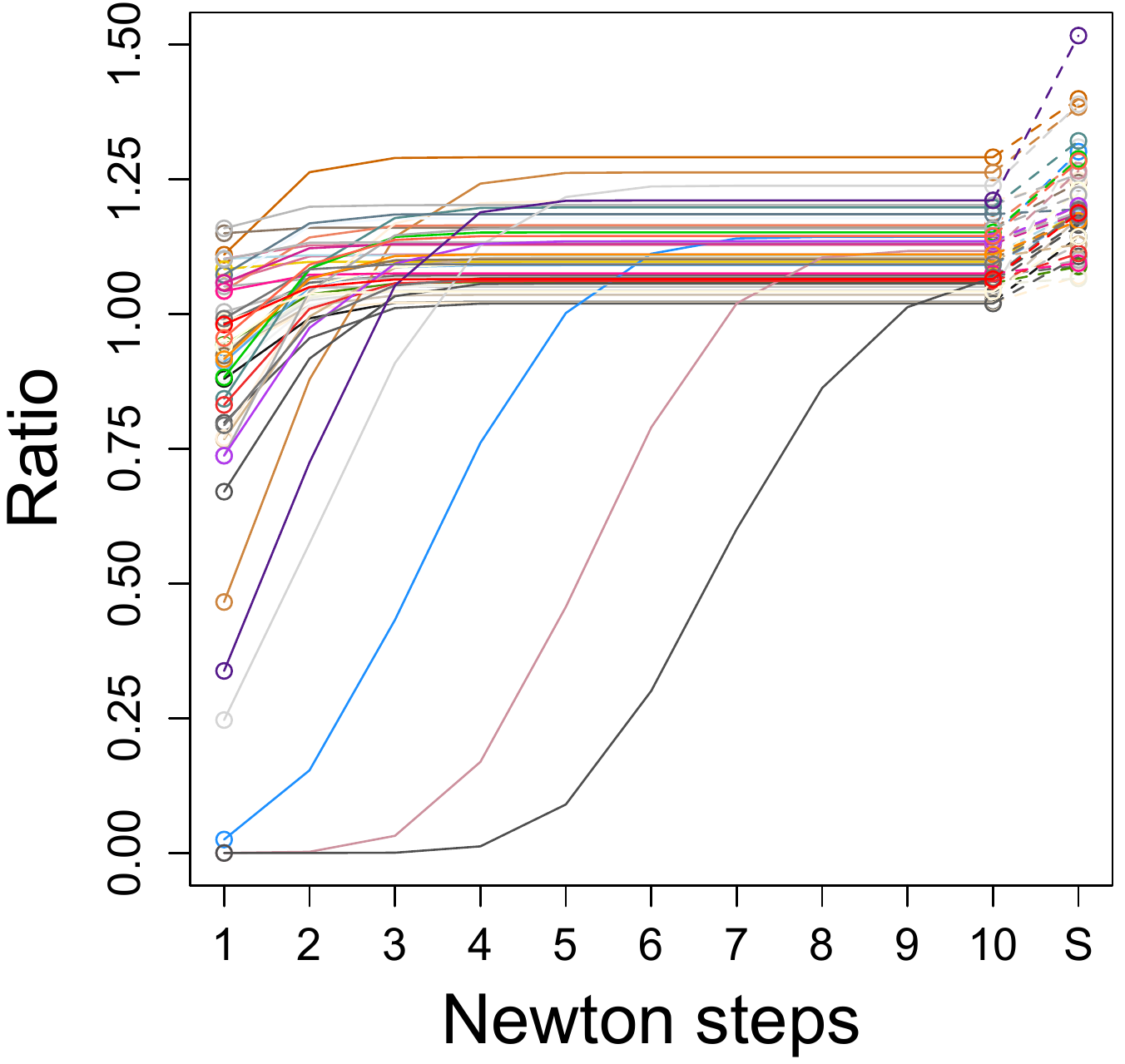}}\quad
\subfigure[$n=100$]{\includegraphics[scale=0.29]{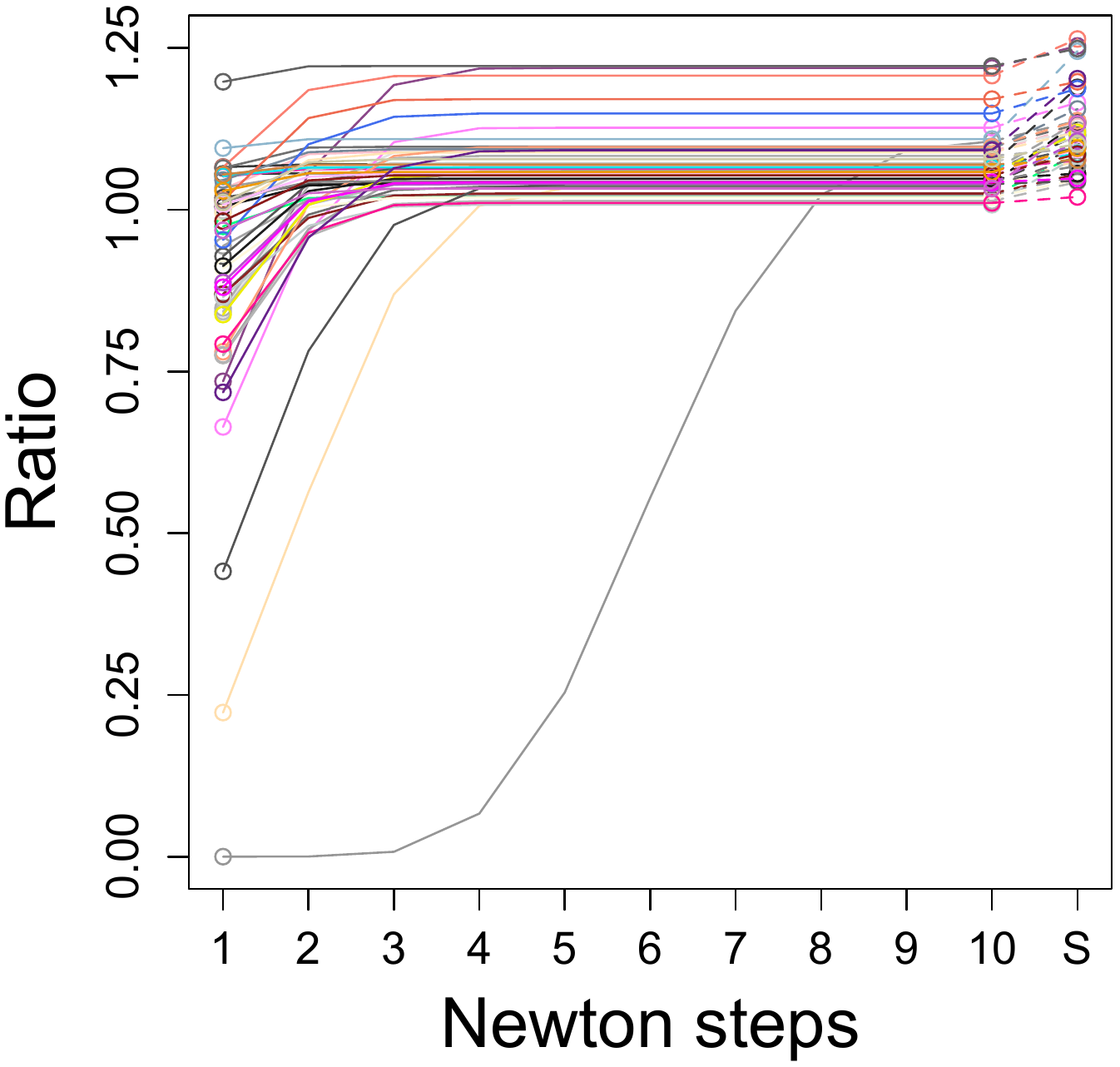}} 
\caption{Plot of 50 Newton-Raphson paths showing the ratio of the likelihood of the correlation matrix obtained by the Newton-Raphson algorithm in the first 10 steps compared to the likelihood of the true data-generating correlation matrix given in Example~\ref{ex_corr} (b). The last point represents the ratio of the likelihood of the sample covariance matrix and the true data-generating correlation matrix.}
\label{fig_corr_4}
\end{figure}

\begin{figure}[b]
\caption{Probability that the MLE is outside the region $\Delta_{2S_n}$.}
\label{table_boundary}
\centering
\vspace{0.3cm}
\begin{tabular}{l | c c  | c c |}
& \multicolumn{2}{|c|}{$3\times 3$ correlation matrix} & \multicolumn{2}{|c|}{$4\times 4$ correlation matrix} \\
& $\mathbb{P}(\Sigma_{\hat{v}}\nsucc 0)$ & $\mathbb{P}(2S_n -\Sigma_{\hat{v}}\nsucc 0)$ & $\mathbb{P}(\Sigma_{\hat{v}}\nsucc 0)$ & $\mathbb{P}(2S_n -\Sigma_{\hat{v}}\nsucc 0)$ \\ \hline
$n=10$ & $0.493$ ($\pm 0.046$) & $0.120$ ($\pm 0.030$)& $0.693$ ($\pm 0.044$)&  $0.077$ ($\pm 0.024$)\\
$n=50$ & $0.037$ ($\pm 0.017$) & $0.003$ ($\pm 0.005$)& $0.055$ ($\pm 0.022$)& $0.004$ ($\pm 0.006$)\\
$n=100$ & $0.003$ ($\pm 0.005$) & $0.000$ ($\pm 0.000$)& $0.006$ ($\pm 0.007$) & $0.000$ ($\pm 0.000$)
\end{tabular}
\end{figure}

In Figures ~\ref{fig_corr_3} and \ref{fig_corr_4} we see that the Newton-Raphson method converges, in about $3$ steps, for estimating $3\times 3$ correlation matrices with $100$ observations. The same procedure takes slightly longer for fewer observations or for $4\times 4$ matrices, and in all scenarios the Newton-Raphson method appears to converge in fewer than $10$ steps.
\end{exmp}

\section{Choice of the estimator and violation of Gaussianity}

In this section we present additional statistical analyses that support the use of the MLE of the covariance matrix in models with linear constraints on the covariance matrix. We first show that the MLE compares favorably to the least squares estimator with respect to various loss functions. We then show that the MLE is robust with respect to violation of the Gaussian assumption.

\subsection{Comparison of the MLE and the least squares estimator}\label{sec_4}

In this section, we analyze through simulations the comparative behavior of the MLE and the least squares estimator, with respect to various loss functions. We argue that for linear Gaussian covariance models the MLE is usually a better estimator than the least squares estimator. One reason is that, especially for small sample sizes, $\Sigma_{\bar{v}}$ can be negative definite, whereas $\Sigma_{\hat{v}}$ -- if it exists -- is always positive semidefinite. Furthermore, as we show in the following simulation study, even when $\Sigma_{\bar{v}}$ is positive definite, the MLE usually has smaller loss compared to $\Sigma_{\bar{v}}$. 

We analyze the following four loss functions:
\begin{enumerate}
\item[(a)] the $\ell_{\infty}$-loss: $\norm{\hat{\Sigma} - \Sigma^*}_{\infty}$,
\item[(b)] the Frobenius loss: $\norm{\hat{\Sigma} - \Sigma^*}_F$,
\item[(c)] the quadratic loss: $\norm{\hat{\Sigma}{\Sigma^*}^{-1} - \mathbb{I}_p}$, and
\item[(d)] the entropy loss: $\tr(\hat{\Sigma}{\Sigma^*}^{-1})-\log\det(\hat{\Sigma}{\Sigma^*}^{-1})-p$
\end{enumerate}
The functions (a) and (b) are standard loss functions, and the loss functions (c) and (d) were proposed in~\cite{Berger94}. We study as an example the time series model for circular serial correlation coefficients discussed in~\cite[Eq.~(5.9)]{andersonLinearCovariance}. This model is generated by $G_0=0$, $G_1=\mathbb{I}_p$, and $G_2$ defined by
$$(G_2)_{ij} = \left\{ \begin{array}{ll}
 1, & \textrm{if $|i-j| = 1\ (\!\!\!\!\!\!\mod p)$}\\
 0, & \textrm{otherwise.}
  \end{array} \right.$$

We display in Figure~\ref{fig_loss} the losses resulting from each loss function above when simulating data under two time series models for circular serial correlation coefficients on 10 nodes with the true covariance matrix defined by $v^*=(0,1,0.3)$ and $v^*=(0,1,0.45)$. Note that the covariance matrix is singular for $v^*=(0,1,0.5)$. Every point in Figure~\ref{fig_loss} corresponds to 1000 simulations and we considered only those simulations for which the least squares estimator was positive semidefinite. Instances where the least squares estimator was singular only occurred for $n=100$ and $v^*=0.45$; in this case we found that $\mathbb{P}(\Sigma_{\bar{v}}\nsucc 0) = 0.011\; (\pm 0.010)$.

In Figure~\ref{fig_loss} we see that, especially for small sample sizes and when the true covariance matrix is close to being singular, the MLE has significantly smaller loss than the least squares estimator. This is to be expected in particular for the entropy loss, since this loss function seems to favor the MLE. However, it is surprising that, even with respect to the $\ell_{\infty}$-loss, the MLE compares favorably to the least squares estimator. This shows that forcing the estimator to be positive semidefinite, as is the case for the MLE, puts a constraint on all entries jointly and leads even entry-wise to an improved estimator. We show here only our simulation results for the time series model for circular serial correlation coefficients; however, we have observed similar phenomena for various other models. 

\begin{figure}[t!]
\centering
\subfigure[$v^*=(0,1,0.3)$, \newline \hspace*{1.3em} $\ell_{\infty}$-loss]{\includegraphics[scale=0.17]{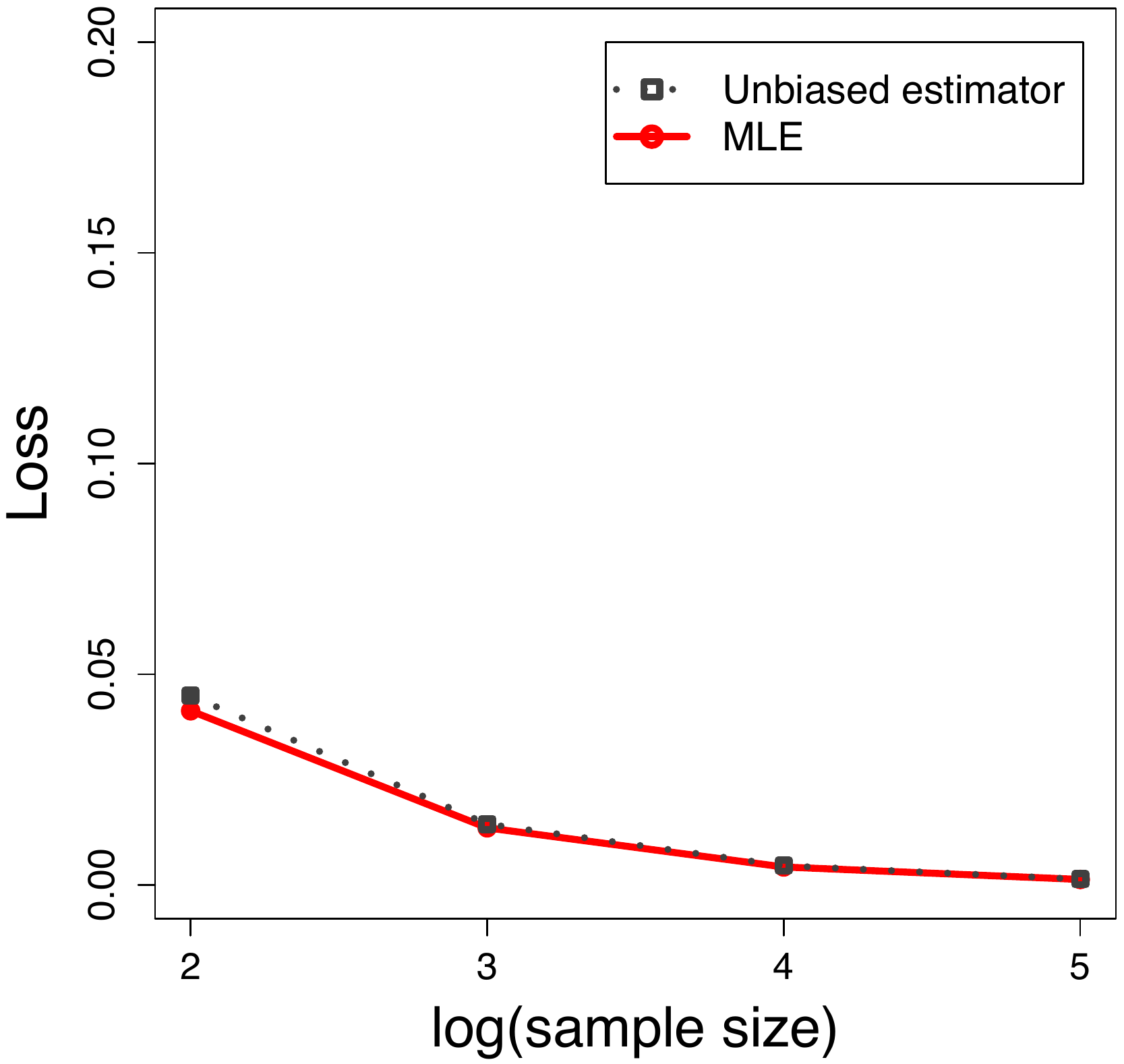}} \;
\subfigure[$v^*=(0,1,0.3)$, \newline \hspace*{1.3em} Frobenius loss]{\includegraphics[scale=0.17]{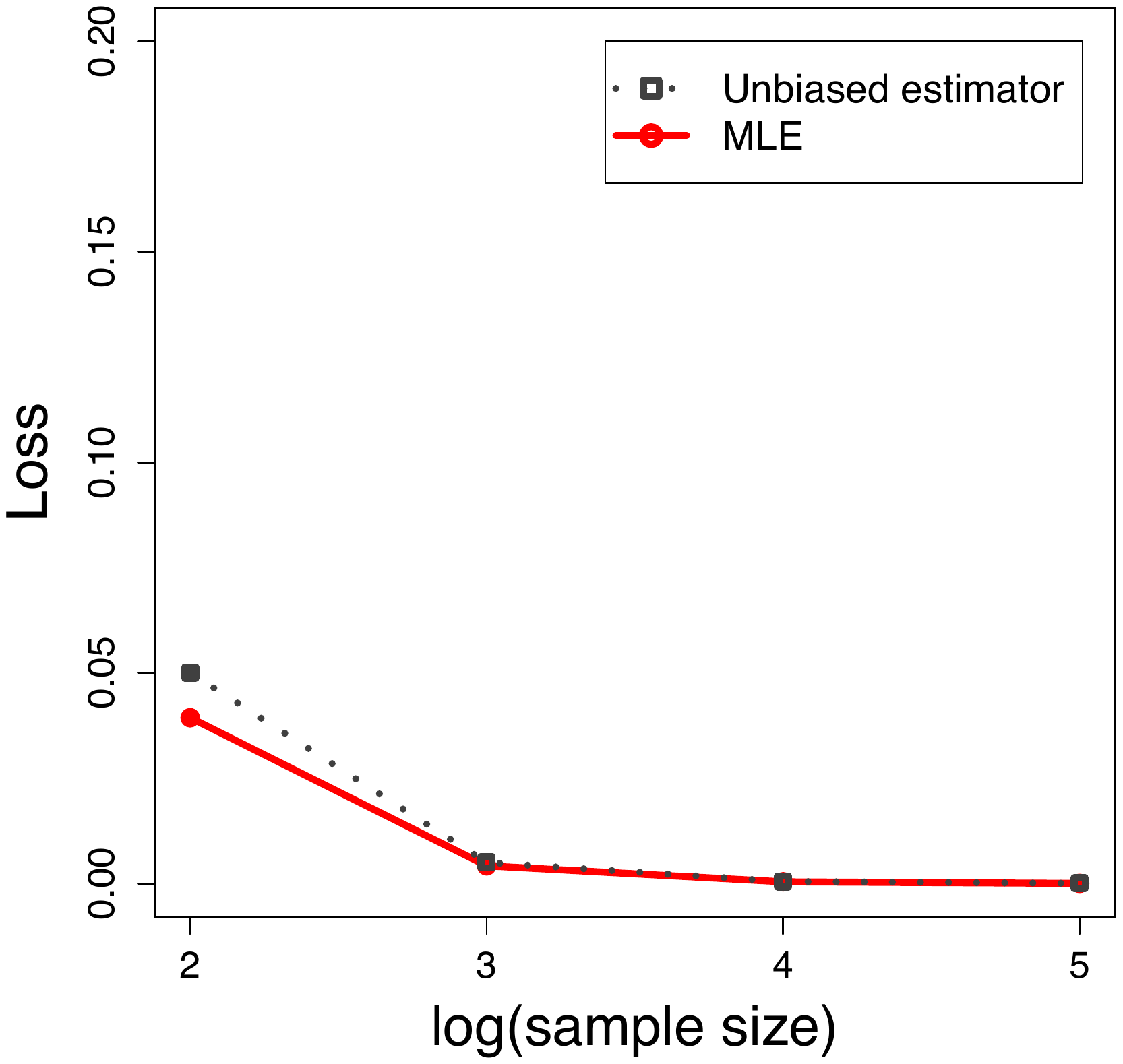}}\;
\subfigure[$v^*=(0,1,0.3)$, \newline \hspace*{1.3em} quadratic loss]{\includegraphics[scale=0.17]{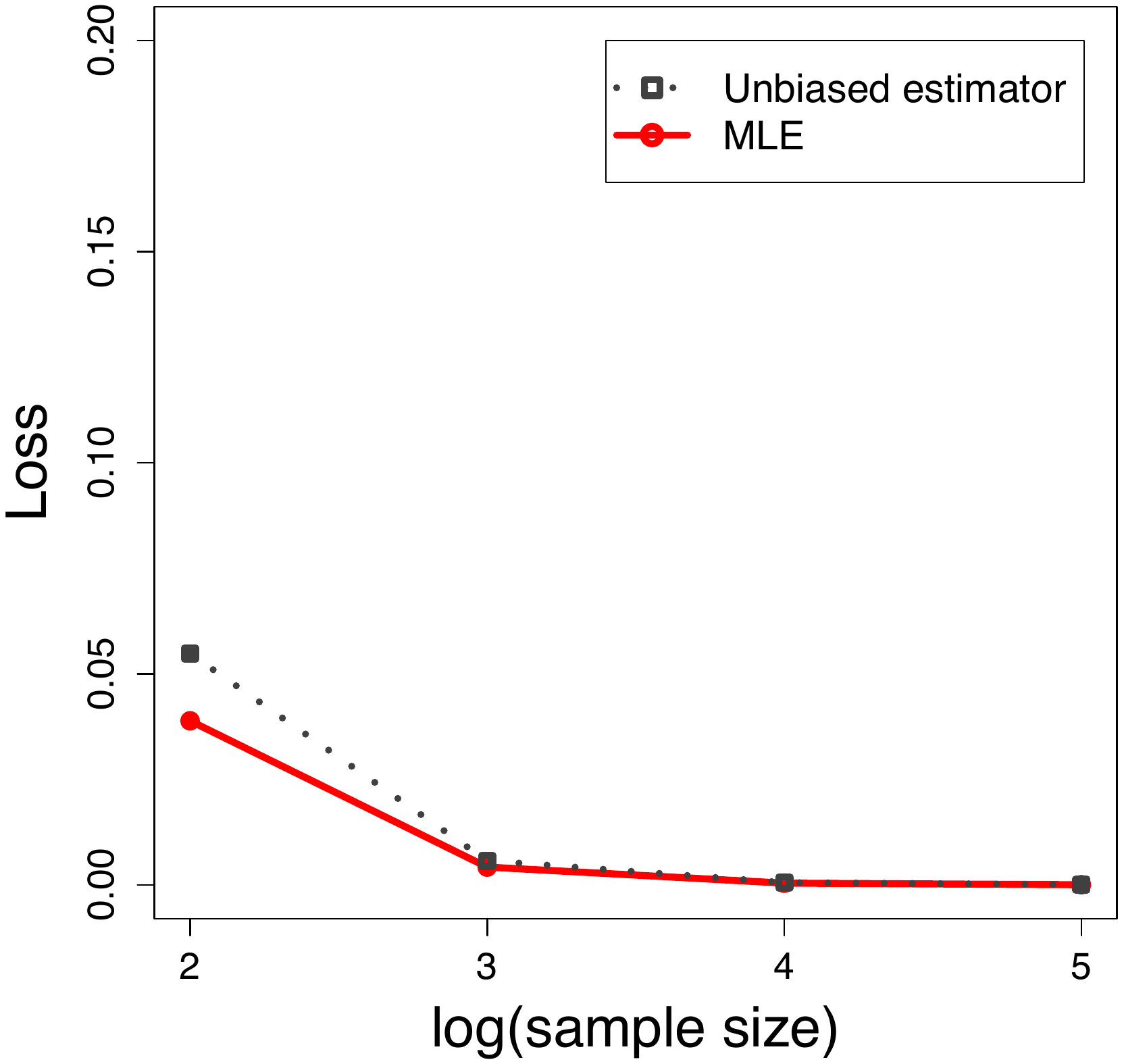}}\;
\subfigure[$v^*=(0,1,0.3)$, \newline \hspace*{1.3em} entropy loss]{\includegraphics[scale=0.17]{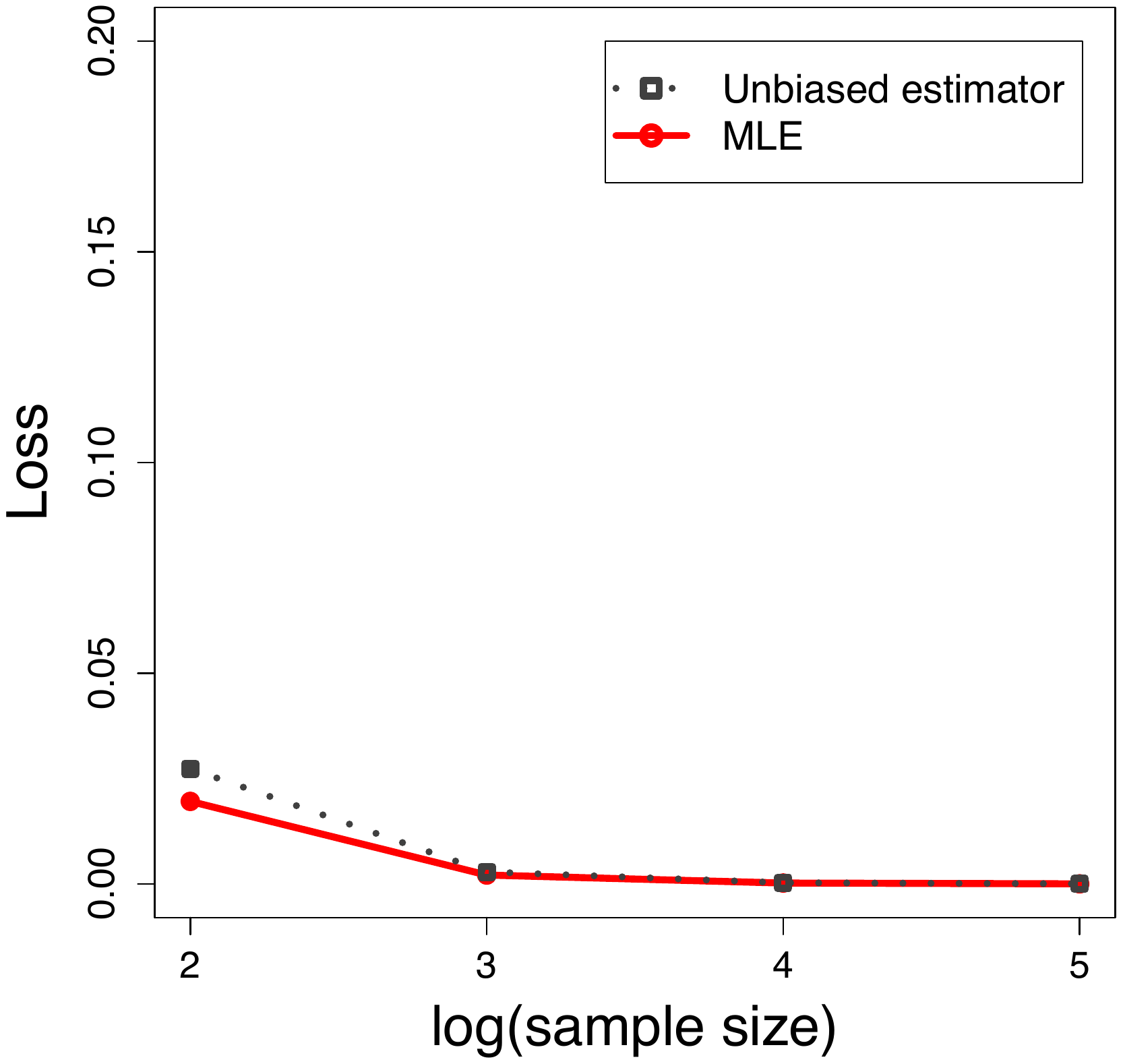}}
\vspace{0.4cm}

\subfigure[$v^*=(0,1,0.45)$, \newline \hspace*{1.3em} $\ell_{\infty}$-loss]{\includegraphics[scale=0.17]{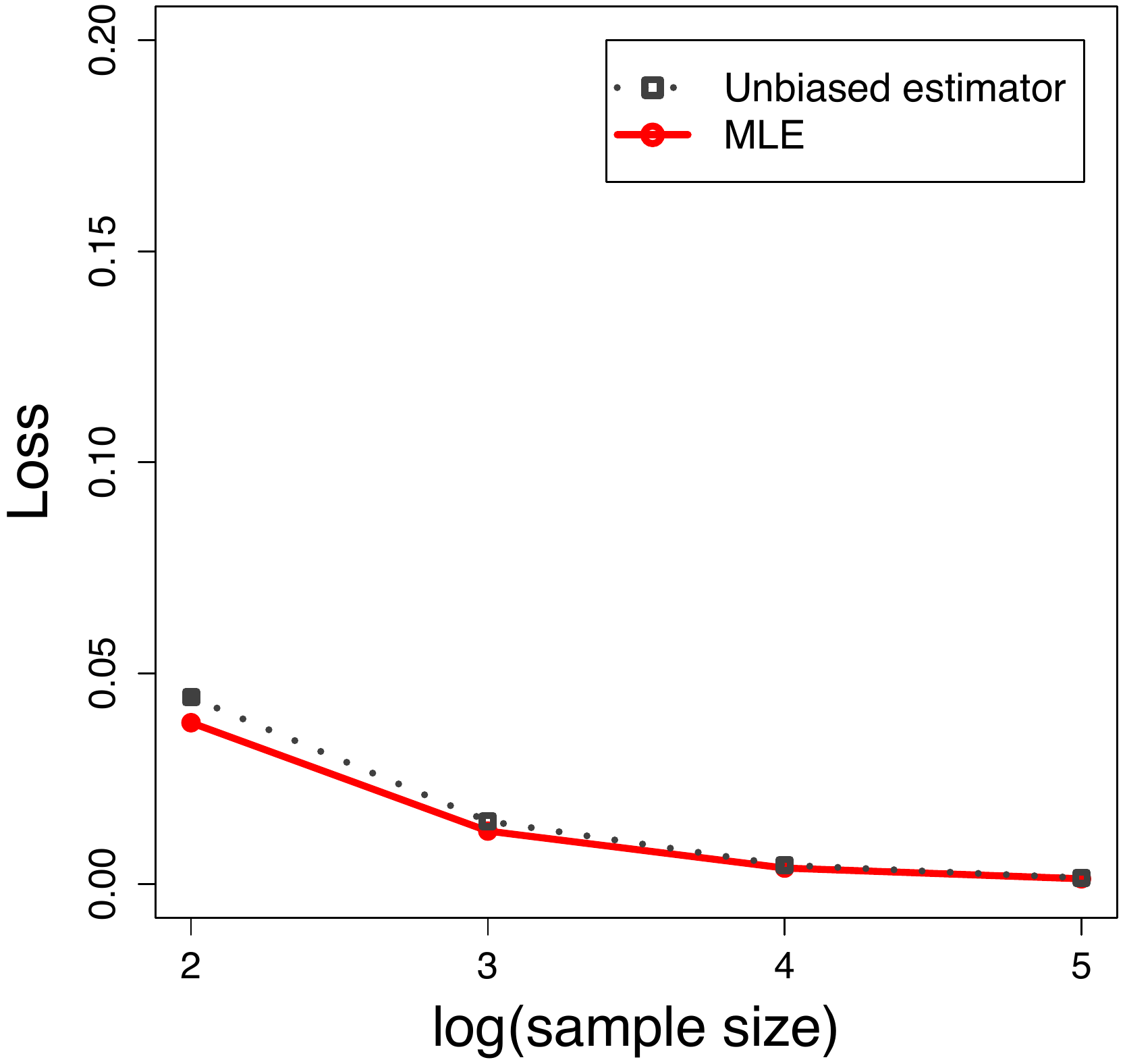}} 
\quad
\subfigure[$v^*=(0,1,0.45)$, \newline \hspace*{1.3em} Frobenius loss]{\includegraphics[scale=0.17]{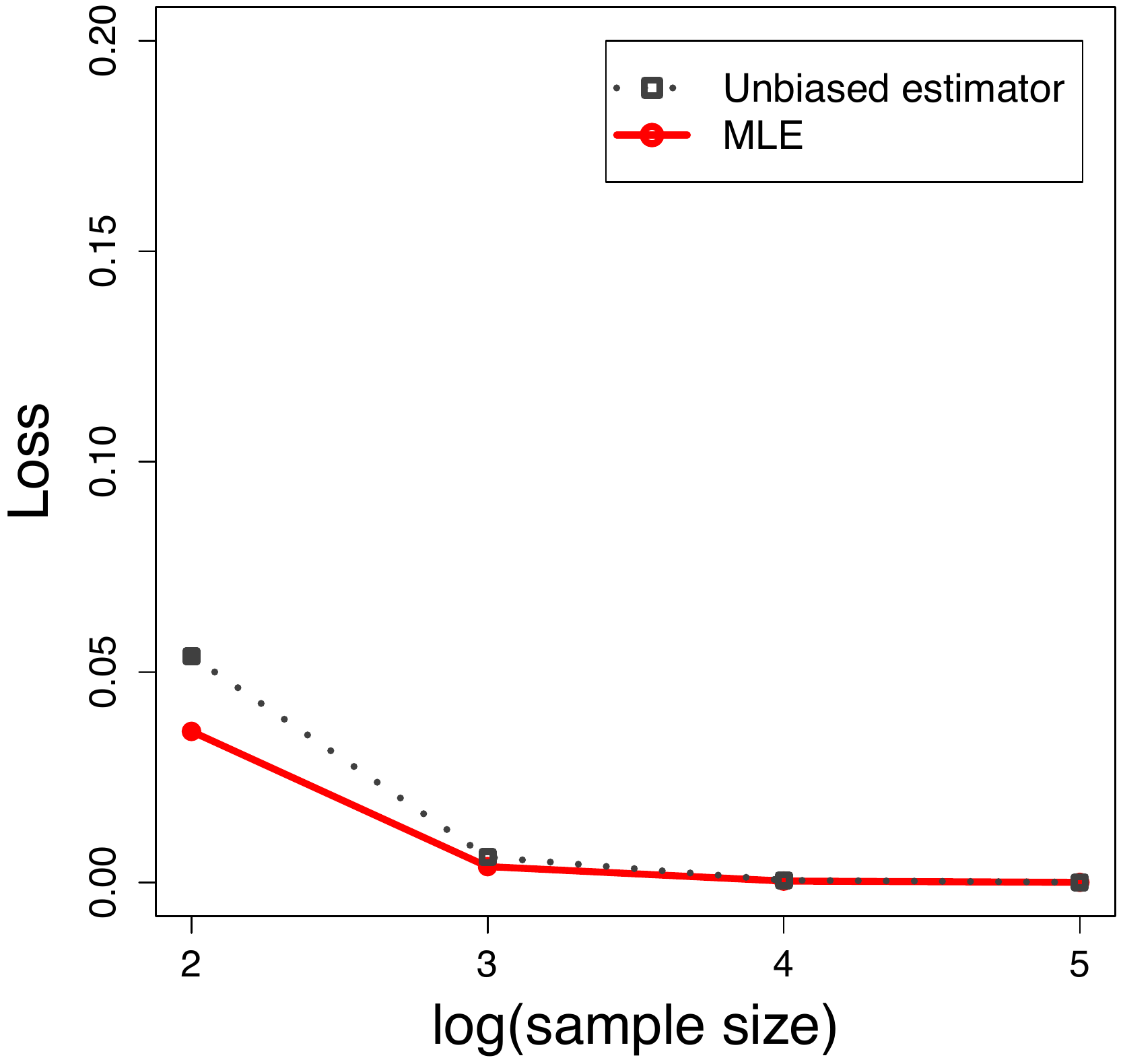}}\;
\subfigure[$v^*=(0,1,0.45)$, \newline \hspace*{1.3em} quadratic loss]{\includegraphics[scale=0.17]{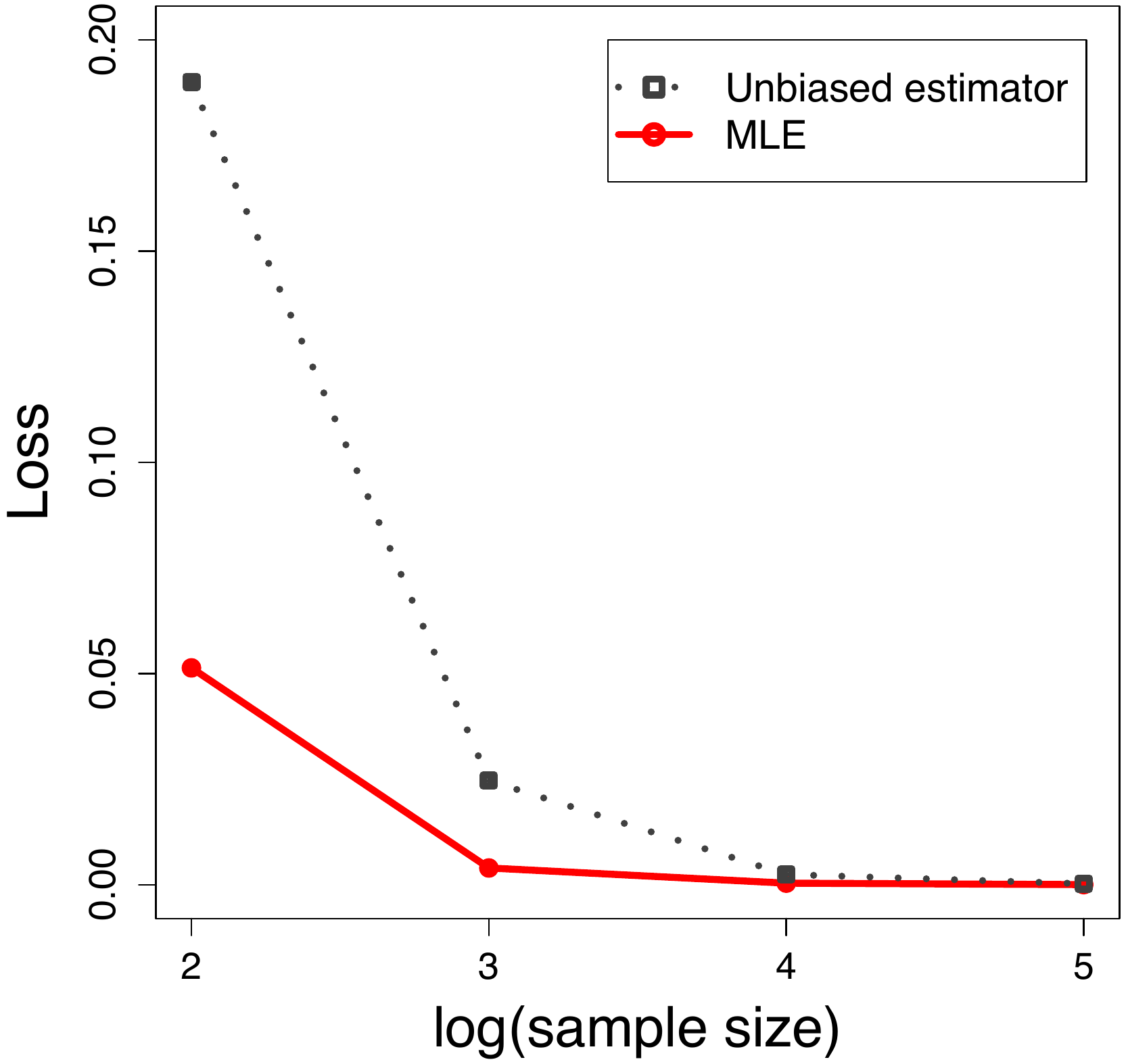}}\;
\subfigure[$v^*=(0,1,0.45)$, \newline \hspace*{1.3em} entropy loss]{\includegraphics[scale=0.17]{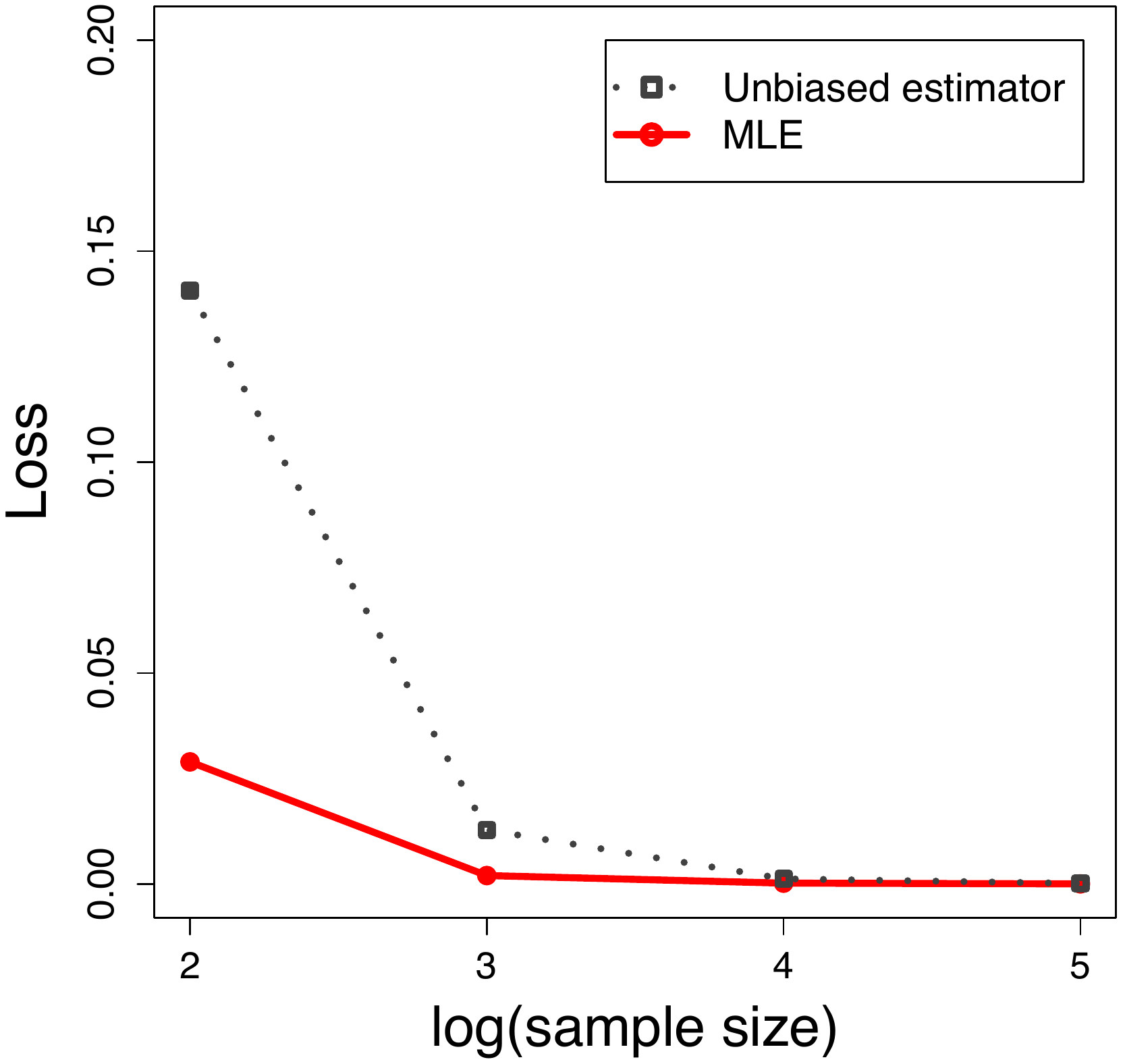}}
\vspace{0.1cm}
\caption{Comparison of MLE and least squares estimator using different loss functions for the time series model for circular serial correlation coefficients on 10 nodes defined by the true parameters being $v^*=(0,1,0.3)$ (top line) and $v^*=(0,1,0.45)$ (bottom line).}
\label{fig_loss}
\end{figure}

\subsection{Violation of Gaussianity}\label{sec:robustness}
\label{sec_Gaussian_violation}

In this section, we provide a simple analysis of model misspecification with respect to the Gaussian assumption. In our exposition we compare the results for the standard Gaussian distribution to some natural alternatives. 

Let $X$ be a $p\times n$ random matrix with i.i.d.~entries $X_{ij}$. First, suppose that the $X_{ij}$, instead of being independent standard Gaussian random variables, are sampled from a subgaussian distribution, i.e., a distribution with thinner tails than the Gaussian distribution. Then by the universality result in \cite[Theorem I.1.1]{feldheim2010}, we obtain 
$$
\P\left(\lambda_{\min}(XX^{T})>\frac{n}{2}\right)\;\;\approx\;\; F\left(\frac{\frac{n}{2}-(\sqrt{p}-\sqrt{n})^{2}}{(\sqrt{p}-\sqrt{n})\left(\frac{1}{\sqrt{p}}-\frac{1}{\sqrt{n}}\right)^{1/3}}\right),
$$
where $F$ is again the Tracy-Widom distribution. As in the Gaussian setting, the approximation is also of order $O(p^{-2/3})$~\cite{feldheim2010}. Hence, our results hold more generally for subgaussian distributions.

\begin{figure}[t!]
\centering
\subfigure[$p=3$]{\includegraphics[scale=0.23]{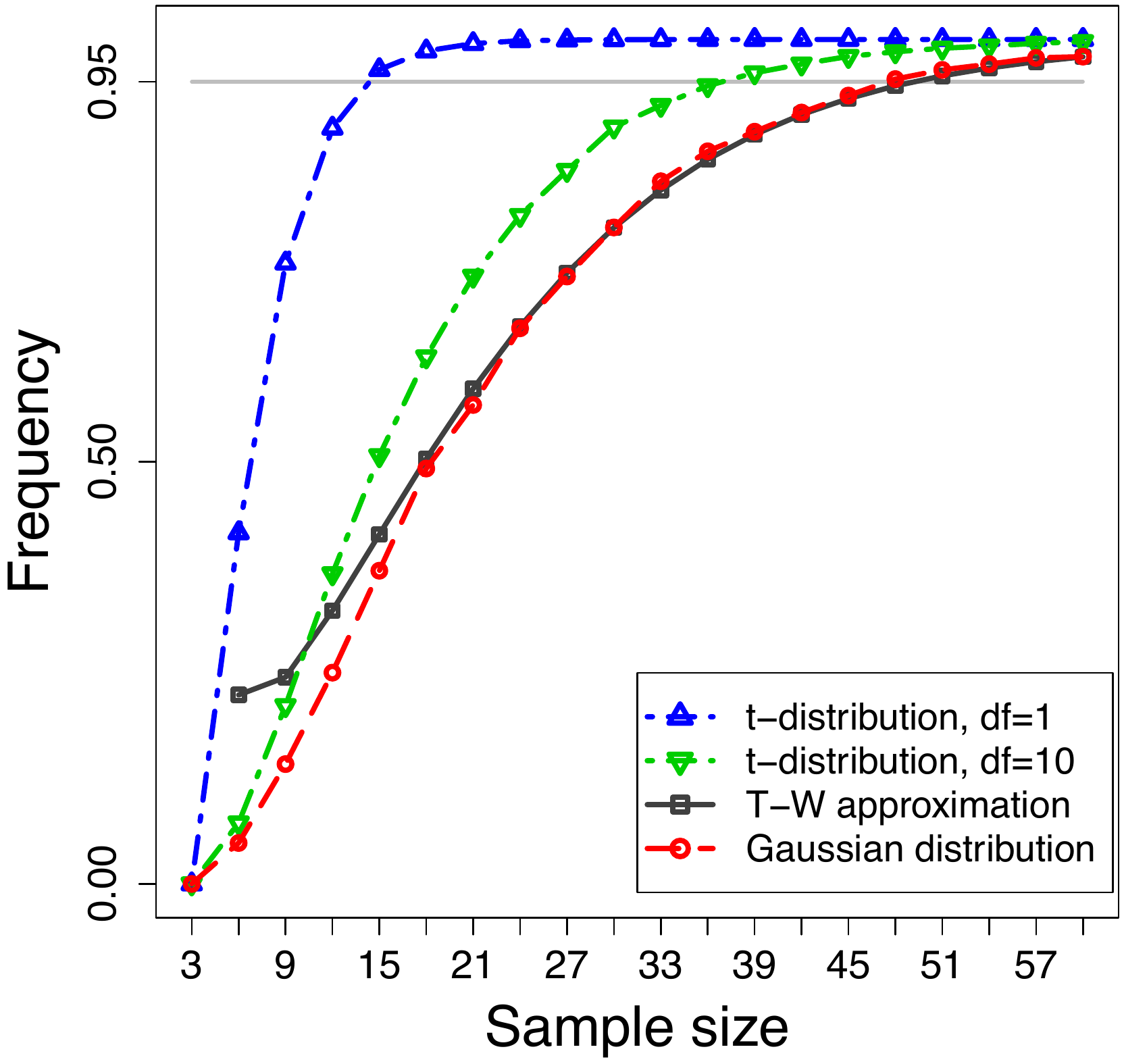}}\quad
\subfigure[$p=5$]{\includegraphics[scale=0.23]{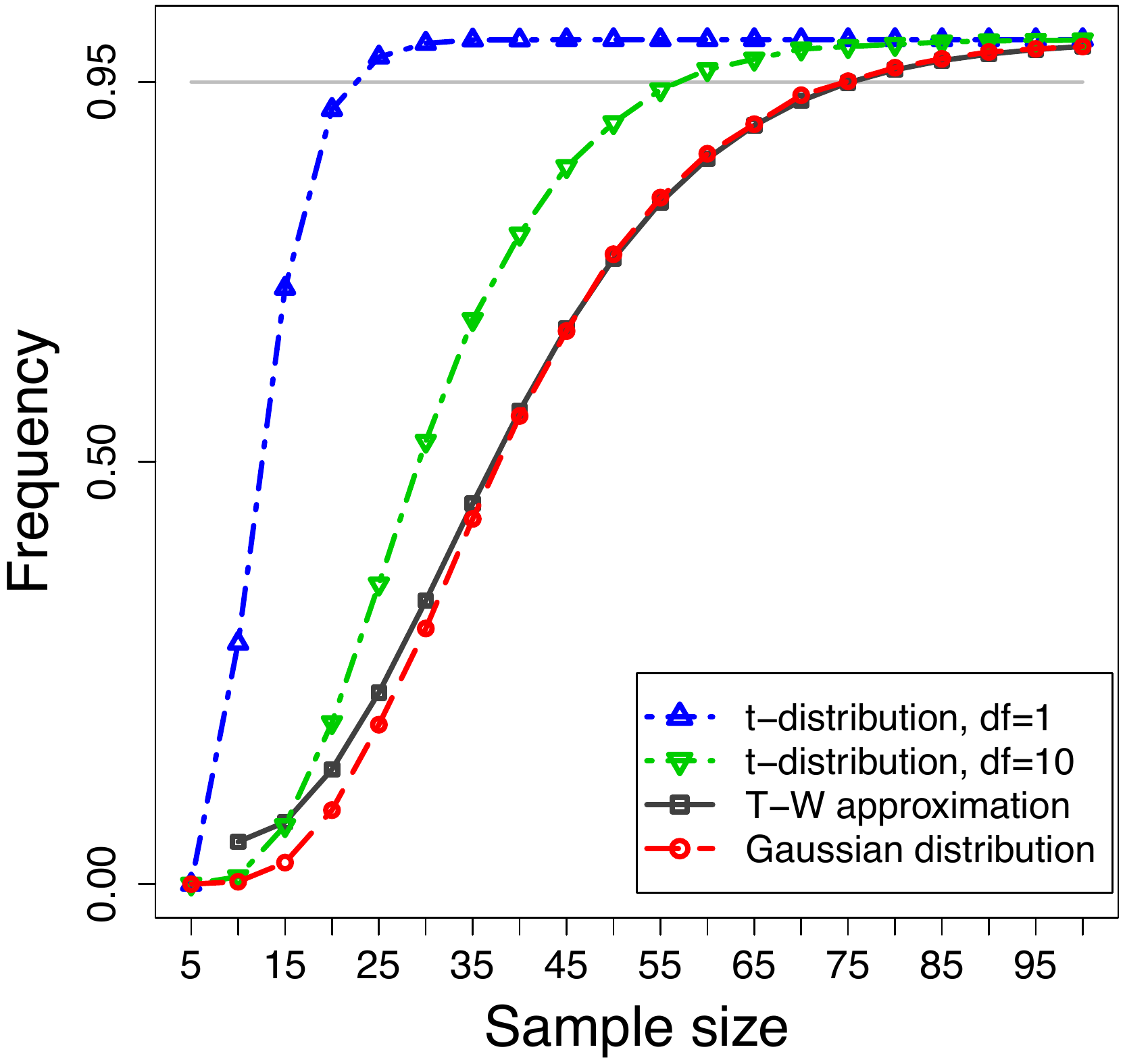}}\quad
\subfigure[$p=10$]{\includegraphics[scale=0.23]{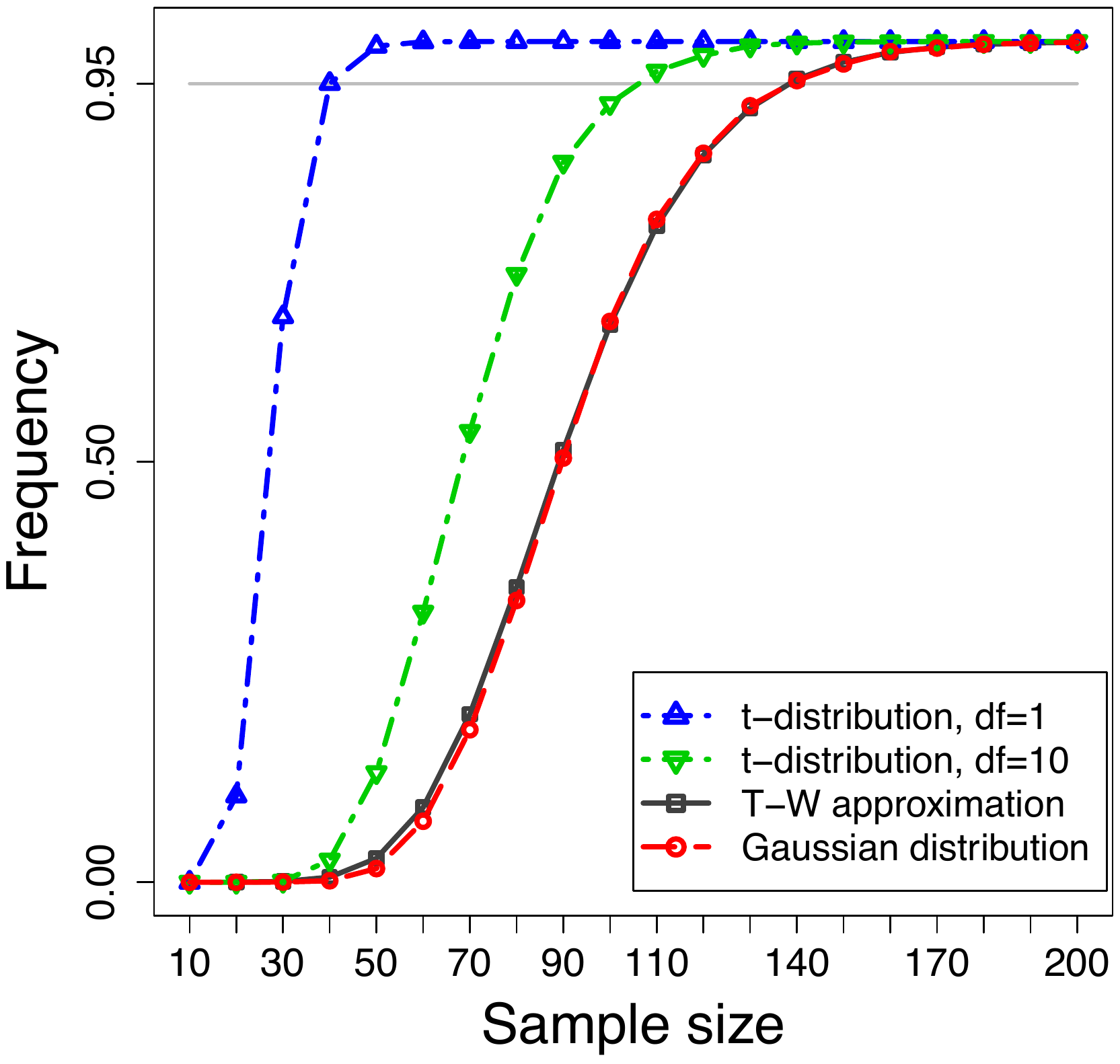}} 
\caption{Comparison of the probability that $2S_{n}^{d}-\mathbb I_{p}\succ 0$ for different values of $d\geq 1$. The Gaussian case corresponds to the limit $d\to \infty$.}
\label{fig:unb3}
\end{figure}

Next, we analyze the case in which the Gaussian assumption is violated in such a way that the true distribution has fatter tails. For this, we assume that each $X_i$ has a multivariate $t$-distribution with scale matrix $\mathbb I_{p}$ and $d$ degrees of freedom. Note that in the limit as $d\to\infty$ we obtain the original standard Gaussian setting. We denote by $S_{n}^{d}$ the sample covariance matrix corresponding to $n$ i.i.d. observations from a multivariate $t$-distribution with $d$ degrees of freedom. Note that for $d>2$, $S_n^d \to d/(d-2) \mathbb{I}_p$, almost surely, as $n\to \infty$. Therefore, as $n\to\infty$, all eigenvalues of $S_n^d$ converge to $d/(d-2)$, which is a decreasing function of $d$. Hence, for sufficiently large $n$, 
\begin{equation}\label{eq:multivariate_t_inequality}
\P(2S_{n}^{d}-\mathbb I_{p}\succ 0) > \P(2S_{n}-\mathbb I_{p}\succ 0),
\end{equation}
indicating that the optimization problem is better behaved under model misspecification with respect to the degrees of freedom in a multivariate $t$-distribution. Our simulations shown in Figure~\ref{fig:unb3} suggest that (\ref{eq:multivariate_t_inequality}) also holds for finite sample sizes. 

Moreover, as a consequence of results in~\cite{Koltchinskii}, we obtain a finite-sample exponential lower bound of the form $1-c\sqrt{p/n}$ on the minimal eigenvalue of the sample covariance matrix based on a sample of $n$ i.i.d. observations from a multivariate $t$-distribution with $d>4$ degrees of freedom. More generally, \cite[Theorem 1.3]{Koltchinskii} derived a finite-sample exponential lower bound for the minimal eigenvalue of the sample covariance matrix based on a random sample of $n$ observations from \emph{any} multivariate isotropic random vector $X$ for which there exists $\eta>2$ and $L\geq 1$ such that for every $t\in S^{p-1}$, the unit sphere in $\R^p$, and every $u > 0$, 
$$
\P(| \langle X, t\rangle |> u) \leq \frac{L}{u^{2+\eta}}.
$$

In summary, the analysis in this section shows that our results are surprisingly robust against model misspecification with respect to the Gaussian assumption for distributions with thinner or with fatter tails.

\section{Discussion}

The likelihood function for linear Gaussian covariance models is, in general, multimodal. However, as we have proved in this paper, multimodality is relevant only if the model is highly misspecified or if the sample size is not sufficiently large so as to compensate for the dimension of the model. We identified a convex region $\Delta_{2S_n}$ on which the likelihood function is strictly concave. Using recent results linking the distribution of extreme eigenvalues of the Wishart distribution to the Tracy-Widom law, we derived asymptotic conditions which guarantee when the true covariance matrix and the global maximum of the likelihood function both lie in the region $\Delta_{2S_n}$. 
Since the approximation by the Tracy-Widom law is accurate even for very small sample sizes, this makes our results useful for applications. An important consequence of our work is that:

\vspace{0.15cm}


\noindent \emph{In the case of linear models for $p\times p$ covariance matrices, where $p$ is as small as $2$, a sample size of $14 {\hskip2pt} p$ suffices for the true covariance matrix to lie in $\Delta_{2S_{n}}$ with probability at least $0.95$.
}


\vspace{0.15cm}

\noindent Moreover, this result is robust against various violations of Gaussianity. Since the goal is to estimate the true covariance matrix, the estimation process should focus on the region $\Delta_{2S_n}$, and then a boundary point that maximizes the likelihood function over $\Delta_{2S_n}$ may possibly be of more interest than the global maximum. If violations of Gaussianity are to be expected, one might want to consider alternative loss functions other than the Gaussian likelihood function. However, our results show that also in this case, the optimization problem should be performed over the convex region $\Delta_{2S_n}$, since the true data generating covariance matrix is contained in this region with high probability.

We emphasize that our results provide \emph{lower} bounds on the probabilities that the maximum likelihood estimation problem for linear Gaussian covariance models is well behaved. This is due to the fact that $\Delta_{2S_n}$ is contained in a larger region over which the likelihood function is strictly concave, and this region is contained in an even larger region over which the likelihood function is unimodal. We believe that the analysis of these larger regions will lead to many interesting research questions. 

To summarize, in this paper we showed that, similarly as for Gaussian graphical models or in fact any Gaussian model with linear constraints on the inverse covariance matrix,  the problem of computing the MLE in a Gaussian model with linear constraints on the covariance matrix is with high probability a convex optimization problem. This opens a range of questions regarding maximum likelihood estimation in Gaussian models where we impose a combination of linear constraints on the covariance and the inverse covariance matrix. Another related question is to study whether similar results hold for maximum likelihood estimation for directed Gaussian graphical models. Finally, in this paper, we concentrated on the setting where $n>p$, motivated by applications to phylogenetics. It would be of great interest to consider also the high-dimensional setting and to consider the problem of learning the underlying linear structure, both important directions for future research.

\bigskip

\appendix

\section{Concentration of measure inequalities for the Wishart distribution}
\label{appendix_Wishart}

Let $W_n$ be a white Wishart random variable, i.e., $W_n\sim\mathcal{W}_p(n, \mathbb{I}_p)$. In the following, we derive finite-sample bounds for $\mathbb{P}(|\tr(W_n)-np|\leq \epsilon)$ and for $\mathbb{P}(|\log\det W_n-p\log(n)|\leq\epsilon)$.

It is well known that $\tr(W_{n})$ has a $\chi^{2}_{np}$ distribution; see, e.g.~\cite[Theorem 3.2.20]{muirheadMVSbook}. Therefore, by Chebyshev's inequality we find that
\begin{equation}
\label{eq_trace}
\P\big(|{\rm Tr}(W_{n})-np|\leq \epsilon\big)\quad\geq\quad 1-\frac{2np}{\epsilon^{2}}.
\end{equation}

Finite-sample bounds that are more accurate than (\ref{eq_trace}) can be obtained using results in  \cite{inglot2006asymptotic, Inglot2010, laurent_chisq}. We present here the bounds from~\cite{laurent_chisq}. Let $X\sim\chi^2_d$; using the original formulation in \cite[page 1325]{laurent_chisq}, we obtain for any positive $x$, 
\begin{align*}
\P(X-d\geq 2\sqrt{dx}+2x)&\;\leq\; \exp(-x),\\
\P(d-X\geq 2\sqrt{dx})&\;\leq\; \exp(-x).
\end{align*}
This can be rewritten for $\epsilon>0$ as
\begin{equation*}
\begin{aligned}
\P(X-d\geq \epsilon)&\;\leq\; \exp\Big(-\frac{1}{2} \big(d+\epsilon-\sqrt{d(d+2\epsilon)}\:\big)\Big),\\
\P(d-X\geq \epsilon)&\;\leq\; \exp\left(-\frac{\epsilon^{2}}{4d}\right).
\end{aligned}
\end{equation*}
Consequently, we obtain the following result:

\begin{prop}
\label{prop_trace}
Let $W_n\sim\mathcal{W}_p(n, \mathbb{I}_p)$. Then, 
$$
\P(|\tr(W_{n})-np|\leq \epsilon) 
\geq\; 1-\exp\left(-\frac{1}{2}\left(np+\epsilon-\sqrt{np(np+2\epsilon)}\right)\right)-\exp\left(-\frac{\epsilon^{2}}{4np}\right).
$$
\end{prop}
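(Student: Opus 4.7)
The plan is to invoke the two Laurent–Massart tail bounds for a chi-squared random variable, both of which are already stated in the excerpt just above the proposition, and combine them with a union bound. Since this is essentially a substitution and bookkeeping exercise, I expect no real obstacle — the only care needed is the algebraic inversion converting the $x$-parameterization of the Laurent–Massart bounds into the $\epsilon$-parameterization appearing in the statement.

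First I would recall the standard fact that if $W_n \sim \mathcal{W}_p(n,\mathbb{I}_p)$, then $\tr(W_n) \sim \chi^2_{np}$ (this is the result cited from Muirhead's book used already in the derivation of \eqref{eq_trace}). Setting $X = \tr(W_n)$ and $d = np$, I would apply the one-sided bounds
\begin{equation*}
\P(X - d \geq \epsilon) \leq \exp\!\left(-\tfrac{1}{2}\bigl(d + \epsilon - \sqrt{d(d+2\epsilon)}\bigr)\right), \qquad
\P(d - X \geq \epsilon) \leq \exp\!\left(-\tfrac{\epsilon^2}{4d}\right),
\end{equation*}
with $d = np$. These reformulations are the ones already written just above the proposition. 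A union bound on the complement then gives the claimed lower bound on $\P(|\tr(W_n) - np| \leq \epsilon)$.

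The only nontrivial bookkeeping is verifying that the reformulation is correct. The original Laurent–Massart inequality reads $\P(X - d \geq 2\sqrt{dx} + 2x) \leq e^{-x}$, so I would solve $\epsilon = 2\sqrt{dx} + 2x$ for $x$ in terms of $\epsilon$: treating this as a quadratic in $\sqrt{x}$ yields $\sqrt{x} = \tfrac{1}{2}\bigl(\sqrt{d+2\epsilon} - \sqrt{d}\bigr)$, and squaring gives $x = \tfrac{1}{2}\bigl(d + \epsilon - \sqrt{d(d+2\epsilon)}\bigr)$, which produces exactly the first exponent. The inversion of the lower tail bound $\P(d - X \geq 2\sqrt{dx}) \leq e^{-x}$ is immediate: $x = \epsilon^2/(4d)$.

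With both reformulations in hand, the proposition follows by writing
\begin{equation*}
\P(|\tr(W_n) - np| > \epsilon) \leq \P(\tr(W_n) - np > \epsilon) + \P(np - \tr(W_n) > \epsilon)
\end{equation*}
and substituting $d = np$ into the two exponential bounds. No analytic difficulty arises; the only potential pitfall is matching signs inside the square root in the upper-tail exponent, which the computation above resolves.
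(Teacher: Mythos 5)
Your proposal is correct and follows exactly the paper's own route: the paper likewise uses $\tr(W_n)\sim\chi^2_{np}$, restates the two Laurent--Massart tail bounds in the $\epsilon$-parameterization (your inversion $x=\tfrac{1}{2}\bigl(d+\epsilon-\sqrt{d(d+2\epsilon)}\bigr)$ and $x=\epsilon^2/(4d)$ is precisely the rewriting the paper performs), and combines them via a union bound with $d=np$. Nothing is missing.
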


We now derive finite-sample bounds for $\log\det(W_n)$. In order to apply Chebyshev's inequality, we calculate the mean and variance of $\log\det(W_n)$.  First, we calculate the moment-generating function of $\log\det(W_n)$:
\begin{align*}
M(t) &:= E \exp(t\log\det(W_n)) \\
&= E(\det(W_n)^t) = E \prod_{i=1}^p Y_i^t = \prod_{i=1}^p E(Y_i^t),
\end{align*}
where $Y_i \sim \chi^2_{n+1-i}$ and the $Y_i$ are mutually independent.  It is well-known that  
$$
E(Y_i^t) = \frac{2^t\, \Gamma(t+\tfrac12(n+1-i))}{\Gamma(\tfrac12(n+1-i))},
$$
$t > -\tfrac12$, and in that case we obtain 
$$
M(t) = \prod_{i=1}^p \frac{2^t \Gamma(t+\tfrac12(n+1-i))}{\Gamma(\tfrac12(n+1-i))}.
$$
Hence, 
$$
\log M(t) = pt\log 2 + \sum_{i=1}^p \log\Gamma(t+\tfrac12(n+1-i)) - \log\Gamma(\tfrac12(n+1-i)).
$$
Differentiating with respect to $t$ and then setting $t=0$, we obtain 
$$
E(\log\det(W_n)) = \frac{{\rm d}}{{\rm d}t} \log M(t)\Big|_{t=0} = p\log 2 + \sum_{i=1}^p \psi(\tfrac12(n+1-i)),
$$
where $\psi(t) = {\rm d}\log\Gamma(t)/{\rm d}t$ is the digamma function.  Differentiating a second time and then setting $t=0$, we obtain 
$$
{\rm{Var}}(\log\det(W_n)) = \frac{{\rm d}^2}{{\rm d}t^2} \log M(t)\Big|_{t=0} = \sum_{i=1}^p \psi_1(\tfrac12(n+1-i)),
$$
where $\psi_1(t) = {\rm d}^2\log\Gamma(t)/{\rm d}t^2$ is the trigamma function.  Therefore, by the non-central version of Chebyshev's inequality, we have
\begin{equation}\label{eq_det}
\mathbb{P}(|\log\det W_n- p\log(n))|\leq \epsilon) 
\;\ge\; 1 - \frac{\sum\limits_{i=1}^p \psi_1(\frac{n+1-i}{2}) +\Big(p\log\left(\frac{2}{n}\right)+\sum\limits_{i=1}^p \psi(\frac{n+1-i}{2})\Big)^2}{\epsilon^2}.
\end{equation}

\vspace{0.5cm}
 
\section*{Acknowledgments}
We thank Mathias Drton, Noureddine El Karoui, Robert D.~Nowak, Lior Pachter, Philippe Rigollet, and Ming Yuan for helpful discussions. We also thank two referees, an Associate Editor, and an Editor for constructive comments.

P.Z.'s research is supported by the European Union 7th Framework Programme (PIOF-GA-2011-300975). CU's research is supported by the Austrian Science Fund (FWF) Y 903-N35. This project was started while C.U. was visiting the Simons Institute at UC Berkeley for the program on ``Theoretical Foundations of Big Data'' during the fall 2013 semester. D.R.'s research was partially supported by the U.S.~National Science Foundation grant DMS-1309808 and by a Romberg Guest Professorship at the Heidelberg University Graduate School for Mathematical and Computational Methods in the Sciences, funded by German Universities Excellence Initiative grant GSC 220/2.


\bibliographystyle{alpha}
\bibliography{algebraic_statistics}

\newcommand{\etalchar}[1]{$^{#1}$}
\begin{thebibliography}{LBM{\etalchar{+}}06}

\bibitem[AH86]{andrade86K}
D.~F. Andrade and R.~W. Helms.
\newblock {ML} estimation and {LR} tests for the multivariate normal
  distribution with general linear model mean and linear-structure covariance
  matrix: k-population complete-data case.
\newblock {\em Communications in Statistics - Theory and Methods},
  15(1):89--107, 1986.

\bibitem[And70]{andersonLinearCovariance}
T.~W. Anderson.
\newblock Estimation of covariance matrices which are linear combinations or
  whose inverses are linear combinations of given matrices.
\newblock In {\em Essays in {P}robability and {S}tatistics}, pages 1--24.
  University of North Carolina Press, Chapel Hill, N.C., 1970.

\bibitem[And73]{anderson73}
T.~W. Anderson.
\newblock Asymptotically efficient estimation of covariance matrices with
  linear structure.
\newblock {\em Annals of Statistics}, 1:135--141, 1973.

\bibitem[AO85]{anderson1985maximum}
T.~W. Anderson and I.~Olkin.
\newblock Maximum-likelihood estimation of the parameters of a multivariate
  normal distribution.
\newblock {\em Linear Algebra and its Applications}, 70:147--171, 1985.

\bibitem[Bai99]{bai99}
Z.~D. Bai.
\newblock Methodologies in spectral analysis of large-dimensional random
  matrices, a review.
\newblock {\em Statistica Sinica}, 9(3):611--677, 1999.
\newblock With comments by G.~J.~Rodgers and J.~W.~Silverstein, and a rejoinder
  by the author.

\bibitem[BSN{\etalchar{+}}11]{brawand2011evolution}
D.~Brawand, M.~Soumillon, A.~Necsulea, P.~Julien, G.~Cs{\'a}rdi, P.~Harrigan,
  M.~Weier, A.~Liechti, A.~Aximu-Petri, M.~Kircher, F.~W. Albert, U.~Zeller,
  P.~Khaitovich, F.~Gr\"utzner, S.~Bergmann, R.~Nielsen, S.~P\"a\"abo, and
  H.~Kaessmann.
\newblock The evolution of gene expression levels in mammalian organs.
\newblock {\em Nature}, 478(7369):343--348, 2011.

\bibitem[BT11]{Bien_2011}
J.~Bien and R.~J. Tibshirani.
\newblock Sparse estimation of a covariance matrix.
\newblock {\em Biometrika}, 98:807--820, 2011.

\bibitem[CDR07]{Chaudhuri2007}
S.~Chaudhuri, M.~Drton, and T.~Richardson.
\newblock Estimation of a covariance matrix with zeros.
\newblock {\em Biometrika}, 94:199--216, 2007.

\bibitem[CP10]{cooper2010body}
N.~Cooper and A.~Purvis.
\newblock Body size evolution in mammals: complexity in tempo and mode.
\newblock {\em The American Naturalist}, 175(6):727--738, 2010.

\bibitem[Dem72]{Dempster}
A.~P. Dempster.
\newblock Covariance selection.
\newblock {\em Biometrics}, 28:157--175, 1972.

\bibitem[Die07]{dietrich2007robust}
F.~Dietrich.
\newblock {\em Robust Signal Processing for Wireless Communications}, volume~2.
\newblock Springer Science \& Business Media, 2007.

\bibitem[DR04]{drton2004}
M.~Drton and T.~S. Richardson.
\newblock Multimodality of the likelihood in the bivariate seemingly unrelated
  regressions model.
\newblock {\em Biometrika}, 91(2):383--392, 2004.

\bibitem[DS01]{davidson2001local}
K.~R. Davidson and S.~J. Szarek.
\newblock Local operator theory, random matrices and {B}anach spaces.
\newblock {\em Handbook of the Geometry of {B}anach Spaces}, 1:317--366, 2001.

\bibitem[EDBN10]{Nowak2010}
B.~Eriksson, G.~Dasarathy, P.~Barford, and R.~Nowak.
\newblock Toward the practical use of network tomography for {I}nternet
  topology discovery.
\newblock In {\em IEEE INFOCOM}, pages 1--9, 2010.

\bibitem[Fel73]{felsenstein_maximum-likelihood_1973}
J.~Felsenstein.
\newblock Maximum-likelihood estimation of evolutionary trees from continuous
  characters.
\newblock {\em American Journal of Human Genetics}, 25:471--492, 1973.

\bibitem[Fel81]{felsenstein_evolutionary_1981}
J.~Felsenstein.
\newblock Evolutionary trees from gene frequencies and quantitative characters:
  Finding maximum likelihood estimates.
\newblock {\em Evolution}, 35:1229--1242, 1981.

\bibitem[FH06]{Freckleton_2006}
R.~P. Freckleton and P.~H. Harvey.
\newblock Detecting non-brownian trait evolution in adaptive radiations.
\newblock {\em PLoS Biol.}, 4:2104--2111, 2006.

\bibitem[FS10]{feldheim2010}
O.~N. Feldheim and S.~Sodin.
\newblock A universality result for the smallest eigenvalues of certain sample
  covariance matrices.
\newblock {\em Geometric and Functional Analysis}, 20(1):88--123, 2010.

\bibitem[IL06]{inglot2006asymptotic}
T.~Inglot and T.~Ledwina.
\newblock Asymptotic optimality of new adaptive test in regression model.
\newblock {\em Annales de l'Institut Henri Poincar\'e (B) Probabilit\'es et
  Statistiques}, 42:579--590, 2006.

\bibitem[Ing10]{Inglot2010}
T.~Inglot.
\newblock {Inequalities for quantiles of the chi-square distribution}.
\newblock {\em Probability and Mathematical Statistics}, 30:339--351, 2010.

\bibitem[Jen88]{jensen1988}
S.~T. Jensen.
\newblock Covariance hypotheses which are linear in both the covariance and the
  inverse covariance.
\newblock {\em Annals of Statistics}, 16(1):302--322, 1988.

\bibitem[JMPS09]{Johnstone:2009fj}
I.~M. Johnstone, Z.~Ma, P.~O. Perry, and M.~Shahram.
\newblock {\em RMTstat: Distributions, Statistics and Tests derived from Random
  Matrix Theory}, 2009.
\newblock R package version 0.2.

\bibitem[JO00]{jansson2000}
M.~Jansson and B.~Ottersten.
\newblock Structured covariance matrix estimation: a parametric approach.
\newblock In {\em Acoustics, Speech, and Signal Processing, 2000. ICASSP '00.
  Proceedings. 2000 IEEE International Conference on}, volume~5, pages
  3172--3175 vol.5, 2000.

\bibitem[Joh01]{johnstone2001}
I.~M. Johnstone.
\newblock On the distribution of the largest eigenvalue in principal components
  analysis.
\newblock {\em Annals of Statistics}, 29:295--327, 2001.

\bibitem[KGP16]{kohli2016modeling}
P.~Kohli, T.~P. Garcia, and M.~Pourahmadi.
\newblock Modeling the cholesky factors of covariance matrices of multivariate
  longitudinal data.
\newblock {\em Journal of Multivariate Analysis}, 145:87--100, 2016.

\bibitem[KM15]{Koltchinskii}
V.~Koltchinskii and S.~Mendelson.
\newblock Bounding the smallest singular value of a random matrix without
  concentration.
\newblock {\em International Mathematics Research Notices}, 2015.

\bibitem[LBM{\etalchar{+}}06]{Lee_2006}
C.~Lee, S.~Blay, A.\/O. Mooers, A.~Singh, and T.~H. Oakley.
\newblock {CoMET}: {A} {M}esquite package for comparing models of continuous
  character evolution on phylogenies.
\newblock {\em Evolutionary Bioinformatics Online}, 478:191--194, 2006.

\bibitem[LM00]{laurent_chisq}
B.~Laurent and P.~Massart.
\newblock Adaptive estimation of a quadratic functional by model selection.
\newblock {\em Annals of Statistics}, 28:1302--1338, 2000.

\bibitem[LW04]{Ledoit2004365}
O.~Ledoit and M.~Wolf.
\newblock A well-conditioned estimator for large-dimensional covariance
  matrices.
\newblock {\em Journal of Multivariate Analysis}, 88(2):365 -- 411, 2004.

\bibitem[Ma12]{ma2012}
Z.~Ma.
\newblock Accuracy of the {T}racy-{W}idom limits for the extreme eigenvalues in
  white {W}ishart matrices.
\newblock {\em Bernoulli}, 18:322--359, 2012.

\bibitem[Mui82]{muirheadMVSbook}
R.~J. Muirhead.
\newblock {\em Aspects of Multivariate Statistical Theory}.
\newblock Wiley, New York, 1982.

\bibitem[Pou99]{pourahmadi1999joint}
M.~Pourahmadi.
\newblock Joint mean-covariance models with applications to longitudinal data:
  unconstrained parameterisation.
\newblock {\em Biometrika}, 86(3):677--690, 1999.

\bibitem[Pou11]{Pourahmadi2011}
M.~Pourahmadi.
\newblock Covariance estimation: The {GLM} and regularization perspectives.
\newblock {\em Statistical Science}, 3:369--387, 2011.

\bibitem[Rao72]{rao1972estimation}
C.~R. Rao.
\newblock Estimation of variance and covariance components in linear models.
\newblock {\em Journal of the American Statistical Association},
  67(337):112--115, 1972.

\bibitem[RB94]{Berger94}
B.~Ruoyong and J.~O. Berger.
\newblock Estimation of a covariance matrix using the reference prior.
\newblock {\em Annals of Statistics}, 22:1195--1211, 1994.

\bibitem[RLZ09]{Rothman_2009}
A.~J. Rothman, E.~Levina, and J.~Zhu.
\newblock Generalized thresholding of large covariance matrices.
\newblock {\em Journal of the American Statistical Association}, 104:177--186,
  2009.

\bibitem[RM94]{Rousseeuw}
P.~J. Rousseeuw and G.~Molenberghs.
\newblock The shape of correlation matrices.
\newblock {\em The American Statistician}, 48:276--279, 1994.

\bibitem[Roc06]{Roch_2006}
S.~Roch.
\newblock A short proof that phylogenetic tree reconstruction by maximum
  likelihood is hard.
\newblock {\em IEEE/ACM Transactions on Computational Biology and
  Bioinformatics}, 3:92--94, 2006.

\bibitem[RS82]{RubinLinearEM}
D.~B. Rubin and T.~H. Szatrowski.
\newblock Finding maximum likelihood estimates of patterned covariance matrices
  by the em algorithm.
\newblock {\em Biometrika}, 69(3):657--660, 1982.

\bibitem[SMHB13]{Schraiber_2013}
J.~G. Schraiber, Y.~Mostovoy, T.~Y. Hsu, and R.~B. Brem.
\newblock Inferring evolutionary histories of pathway regulation from
  transcriptional profiling data.
\newblock {\em PLoS Comput.~Biol.}, 9, 2013.

\bibitem[SOA99]{Stuart}
A.~Stuart, J.~K. Ord, and S.~F. Arnold.
\newblock {\em Kendall's Advanced Theory of Statistics 2A. Classical Inference
  and the Linear Model}.
\newblock Arnold, London, 1999.

\bibitem[SS05]{schafer2005shrinkage}
J.~Sch{\"a}fer and K.~Strimmer.
\newblock A shrinkage approach to large-scale covariance matrix estimation and
  implications for functional genomics.
\newblock {\em Statistical Applications in Genetics and Molecular Biology},
  4(1), 2005.

\bibitem[SWY00]{Small}
C.~G. Small, J.~Wang, and Z.~Yang.
\newblock Eliminating multiple root problems in estimation.
\newblock {\em Statistical Science}, 15:313--341, 2000.

\bibitem[Sza78]{Szatrowski1978}
T.~H. Szatrowski.
\newblock Explicit solutions, one iteration convergence and averaging in the
  multivariate normal estimation problem for patterned means and covariances.
\newblock {\em Annals of the Institute of Statistical Mathematics A},
  30:81--88, 1978.

\bibitem[Sza80]{Szatrowski1980}
T.~H. Szatrowski.
\newblock Necessary and sufficient conditions for explicit solutions in the
  multivariate normal estimation problem for patterned means and covariances.
\newblock {\em Annals of Statistics}, 8:802--810, 1980.

\bibitem[Sza85]{Szatrowski1985}
T.~H. Szatrowski.
\newblock Patterned covariances.
\newblock In {\em Encyclopedia of Statistical Sciences}, pages 638--641. Wiley,
  New York, 1985.

\bibitem[Tao12]{Tao}
T.~Tao.
\newblock {\em Topics in Random Matrix Theory}, volume~30.
\newblock American Mathematical Society, Providence, R.I., 2012.

\bibitem[TW96]{TracyWidom1996}
C.~A. Tracy and H.~Widom.
\newblock On orthogonal and symplectic matrix ensembles.
\newblock {\em Communications in Mathematical Physics}, 177:727--754, 1996.

\bibitem[TYBN04]{Nowak2004}
Y.~Tsang, M.~Yildiz, P.~Barford, and R.~Nowak.
\newblock Network radar: tomography from round trip time measurements.
\newblock In {\em Internet Measurement Conference (IMC)}, pages 175--180, 2004.

\bibitem[Uhl12]{my_paper}
C.~Uhler.
\newblock Geometry of maximum likelihood estimation in {G}aussian graphical
  models.
\newblock {\em Annals of Statistics}, 40:238--261, 2012.

\bibitem[Ver12]{vershynin2010introduction}
R.~Vershynin.
\newblock Introduction to the non-asymptotic analysis of random matrices.
\newblock In {\em Compressed Sensing, Theory and Applications}, pages 210--268.
  Cambridge University Press, New York, 2012.

\bibitem[Wat64]{watsonMLE}
G.~S. Watson.
\newblock A note on maximum likelihood.
\newblock {\em Sankhy{\=a}, Series A}, 26:303--304, 1964.

\bibitem[WCM06]{wermuthChains}
N.~Wermuth, D.~R. Cox, and G.~M. Marchetti.
\newblock Covariance chains.
\newblock {\em Bernoulli}, 12(5):841--862, 2006.

\bibitem[WX12]{wu2012covariance}
W.~B. Wu and H.~Xiao.
\newblock Covariance matrix estimation in time series.
\newblock {\em Handbook of Statistics}, 30:187--209, 2012.

\bibitem[ZC05]{zhou2005comparison}
S.~Zhou and Y.~Chen.
\newblock A comparison of process variation estimators for in-process
  dimensional measurements and control.
\newblock {\em Journal of Dynamic Systems, Measurement, and Control}, 127:69,
  2005.

\end{thebibliography}

\end{document}